\newtheorem{prop}{Proposition}
\newtheorem{thm}[prop]{Theorem}
\newtheorem{lemma}[prop]{Lemma}
\newtheorem{defi}[prop]{Definition}
\theoremstyle{remark}
\newtheorem{rmk}[prop]{Remark}
\newcommand{\enne}{\mathbb{N}}
\newcommand{\erre}{\mathbb{R}}
\newcommand{\dom}{\mathsf{D}}
\newcommand{\E}{\mathbb{E}}
\newcommand{\cF}{\mathcal{F}}
\renewcommand{\H}{\mathbb{H}}
\renewcommand{\L}{\mathbb{L}}
\newcommand{\cL}{\mathscr{L}}
\renewcommand{\P}{\mathbb{P}}
\newcommand{\embed}{\hookrightarrow}
\newcommand{\ep}[1]{{#1}^\varepsilon}
\newcommand{\lip}{\dot{C}^{0,1}}
\DeclareMathOperator{\tr}{Tr}
\DeclarePairedDelimiter\abs{\lvert}{\rvert}
\DeclarePairedDelimiter\norm{\lVert}{\rVert}
\DeclarePairedDelimiterX\ip[2]{\langle}{\rangle}{#1,#2}
\title{On well-posedness of semilinear stochastic evolution equations
  on $L_p$ spaces}
\author{Carlo Marinelli%
\thanks{Department of Mathematics, University College London, Gower
  Street, London WC1E 6BT, United Kingdom. URL: \texttt{http://goo.gl/4GKJP}}}
\date{December 14, 2015}
\begin{document}
\maketitle

\begin{abstract}
  We establish well-posedness in the mild sense for a class of
  stochastic semilinear evolution equations on $L_p$ spaces, driven by
  multiplicative Wiener noise, with a drift term given by an
  evaluation operator that is assumed to be quasi-monotone and
  polynomially growing, but not necessarily continuous. In particular,
  we consider a notion of mild solution ensuring that the evaluation
  operator applied to the solution is still function-valued, but
  satisfies only minimal integrability conditions. The proofs rely on
  stochastic calculus in Banach spaces, monotonicity and convexity
  techniques, and weak compactness in $L_1$ spaces.
\end{abstract}

\maketitle


\section{Introduction}
The purpose of this work is to prove well-posedness (existence,
uniqueness and continuous dependence of solutions on the initial
datum) to stochastic evolution equations (SEEs) of the type
\begin{equation}
  \label{eq:0}
  du(t) + Au(t)\,dt + f(u(t))\,dt = \eta u(t)\,dt + B(t,u(t))\,dW(t),
  \qquad u(0)=u_0,
\end{equation}
where $t \in [0,T]$, $A$ is a linear $m$-accretive operator on
$L_q(D)$, with $D$ a bounded domain in $\erre^n$ and $q \geq 2$,
$f:\erre\to\erre$ is an increasing function of polynomial growth
(without any continuity assumption), $W$ is a cylindrical Wiener noise
on a separable Hilbert space $H$, and $B(t,\cdot)$ is a (random) map
from $L_q(D)$ to $\cL(H,L_q(D))$ satisfying suitable Lipschitz
continuity conditions. Precise assumptions on the notion of solution
and on the data of the problem are given in Section \ref{sec:main}. In
particular, we adopt three notions of solution, that depend on the
integrability properties of $f(u)$: strict mild and mild solution are
defined to be such that $f(u) \in L_1(0,T;L_q(D))$ almost surely and
that $f(u) \in L_1(\Omega \times [0,T] \times D)$, respectively (here
$\Omega$ stands for the underlying probability space); on the other
hand, generalized solutions are defined as limits of strict mild
solutions, so that, in general, $f(u)$ may not have any
integrability. The first notion of solution is the simplest but also
the most restrictive in terms of assumptions on the data of the
problem. The second notion is the most natural if one wants $f(u)$ to
be function-valued, while satisfying minimal integrability
conditions. The last notion, motivated by analogous constructions in
the deterministic setting, apart of being the least demanding, is
useful in several contexts, for instance in the study of Kolmogorov
operators and Markovian semigroups associated to SPDEs
(cf.~e.g.~\cite{DP-K}).

Our approach to the well-posedness problem is based, on the
probabilistic side, on stochastic calculus for processes with values
in Banach spaces (of which we use only the ``simpler'' version on
spaces with type 2), and, on the analytic side, on methods from the
theory of (nonlinear) $m$-accretive operators and convex
analysis. Some ideas developed here, concerning strict mild and
generalized solutions, already appeared, in a more primitive form, in
\cite{cm:Ascona13, cm:IDAQP10} and, in a slightly different context,
in \cite{cm:EJP10}.

There is a rather large literature on semilinear dissipative SEEs,
up-to-date references to which can be found, e.g., in \cite{DPZ}. Here
we shall only discuss how our results compare to other recent ones
that are most closely related. A widely used technique to study the
well-posedness of \eqref{eq:0} consists in the reduction of the
equation to a deterministic evolution equation with random
coefficients, roughly speaking ``by subtracting the stochastic
convolution''. To the best of our knowledge, the sharpest result
obtained through this reduction is due to Barbu \cite{Barbu:Lincei},
who proved existence and uniqueness of mild solutions to \eqref{eq:0}
assuming that $q=2$, $A$ is the negative Laplacian, $B$ does not
depend on $u$ (i.e. the noise is additive), and, most importantly, the
stochastic convolution
\[
S \diamond B := t \mapsto \int_0^t S(t - s) B(s)\,dW(s),
\]
where $S$ denotes the semigroup generated by $-A$, is continuous in
time and space, and satisfies
$F(S \diamond B) \in L_1(\Omega \times [0,T] \times D)$, where $F$ is
a primitive of $f$. On the other hand, no polynomial bound on $f$ is
assumed. Our setting allows much more flexibility and no assumption is
made on the stochastic convolution, but we need an extra polynomial
growth assumption on $f$. A (partial) extension of our results to the
case of general $f$ (i.e. removing the growth assumption) and $q=2$ is
provided in a forthcoming joint work with L.~Scarpa \cite{cm:luca},
thus considerably improving on the result of \cite{Barbu:Lincei}. In
another vein, global well-posedness in the mild sense of \eqref{eq:0}
is obtained in \cite{KvN2} assuming that $S$ is an analytic semigroup
and that $f$ is polynomially bounded and locally Lipschitz continuous
on $L_q(D)$ (not as function of $\erre$!). The approach is through
approximation of the coefficients and extension of local
solutions. Even though the condition on $f$ is very restrictive,
adapting ideas from \cite{cerrai03}, and considerably improving
results thereof, well-posedness in spaces of continuous functions is
obtained, allowing $f$ to be monotone and locally Lipschitz, now only
as a function of $\erre$. Incidentally, in \cite{cerrai03}, hence also
in \cite{KvN2}, the above-mentioned reduction to a PDE with random
coefficients is again used, although in a more sophisticated
way. While the reasoning in \cite{cerrai03} relies on stochastic
calculus in Hilbert spaces and ad hoc arguments, the improvements in
\cite{KvN2} depend in an essential way on stochastic calculus in
Banach spaces. We also use techniques from this calculus (although in
a less sophisticated way), but we do not need any local
Lipschitzianity assumption, although we obviously cannot consider
solvability in spaces of continuous functions.

Our proofs do \emph{not} employ at any stage the reduction to a
deterministic equation with random coefficients. In fact, following the
classical approach of constructing solutions to regularized equations
and then passing to the limit in an appropriate topology
(cf.~e.g.~\cite{Barbu,Bre-mm} for the deterministic theory), all the
necessary estimates are obtained by stochastic calculus arguments,
rather than by classical calculus. Namely, the essential tool is
It\^o's formula for $L_q$-valued processes (even though, as explained
in Remark \ref{rmk:sconv} below, the classical formula for real
processes would suffice). Using techniques from convex analysis and
the theory of nonlinear $m$-accretive operators, we then show that,
thanks to the above-mentioned estimates, solutions to regularized
equations converge to a process that solves the original equation.

The rest of the text is organized as follows: in Section
\ref{sec:prelim} we collect several tools used in the proof of the
main results. Everything except the content of the last subsection is
known and is included here for the readers' convenience. Our main
results are stated in Section \ref{sec:main}. In Sections
\ref{sec:strict}, \ref{sec:gen}, and \ref{sec:mild} we prove
well-posedness in the strict mild, generalized, and mild sense,
respectively.

\smallskip

\noindent
\textbf{Acknowledgments.} A large part of the work for this paper was
done while the author was visiting the Interdisziplin\"ares Zentrum
f\"ur Komplexe Systeme (IZKS) at the University of Bonn. The author is
very grateful to Sergio Albeverio, his host, for the kind hospitality
and the excellent working conditions.


\section{Preliminaries}
\label{sec:prelim}
In this section we introduce notation and recall some facts that will
be used in the rest of the text. 

\smallskip

Let $(\Omega,\mathcal{F},(\mathcal{F}_t)_{0 \leq t \leq T},\P)$, with
$T>0$ fixed, be a filtered probability space satisfying the ``usual''
conditions (see e.g.~\cite{DM-mg}), and let $\E$ denote expectation
with respect to $\P$. All stochastic elements will be defined on this
stochastic basis, and any expression involving random quantities will
be meant to hold $\P$-almost surely, unless otherwise
stated. Throughout the paper, $W$ stands for a cylindrical Wiener
process on a (fixed) separable Hilbert space $H$.

Given $p>0$ and a Banach space $X$, we shall denote by $\L_p(X)$ the
set of $X$-valued random variables $\zeta$ such that
\[
\norm{\zeta}_{\L_p(X)} := \bigl(\E\norm{\zeta}_X^p\bigr)^{1/p} < \infty,
\]
and by $\H_p(X)$ the set of measurable\footnote{Since we never need
  weak measurability, measurable will always mean strongly
  measurable.}, adapted $X$-valued processes such that
\[
\norm{u}_{\H_p(X)} := 
\Bigl( \E\sup_{t \leq T} \norm{u(t)}_X^p \Bigr)^{1/p} < \infty.
\]
Both spaces are Banach spaces for $p \geq 1$, and quasi-Banach spaces
for $0<p<1$. The space $\H_p(X)$, when endowed with the equivalent
(quasi-)norm
\[
\norm{u}_{\H_{p,\alpha}(X)} := 
\Bigl( \E\sup_{t \leq T} \norm[\big]{e^{-\alpha t} u(t)}_X^p \Bigr)^{1/p},
\qquad \alpha \in \erre_+,
\]
will be denoted by $\H_{p,\alpha}(X)$.

\smallskip

The domain and range of a map $T$ will be denoted by $\mathsf{D}(T)$
and $\mathsf{R}(T)$, respectively. The standard notation $\cL(E,F)$
will be used for the space of linear bounded operators between two
Banach spaces $E$ and $F$. If $E$ and $F$ are metric spaces,
$\lip(E,F)$ stands for the set of Lipschitz maps $\phi:E \to F$ such
that
\[
\norm[\big]{\phi}_{\lip(E,F)} := \sup_{\substack{x,y\in E\\x \neq y}}
\frac{d\bigl(\phi(x),\phi(y)\bigr)}{d(x,y)} < \infty.
\]
We shall omit the indication of the spaces $E$ and $F$ when it is
clear what they are.

\smallskip

Throughout this section we shall simply write $L_q$,
$q \in [0,\infty]$, to mean the usual Lebesgue spaces over a generic
$\sigma$-finite measure space $(Y,\mathcal{A},\mu)$.

Finally, we shall use the notation $a \lesssim b$ to mean that $a$ is
less than or equal to $b$ modulo a constant, with subscripts to
emphasize its dependence on specific quantities. Completely analogous
meaning have the symbols $\gtrsim$ and $\eqsim$.

\subsection{Convex functions and subdifferentials}
\label{ssec:conv}
Let $F:\erre \to \erre \cup \{+\infty\}$ be a convex
function. Then, for any $x$, $y \in \mathsf{D}(F)$,
\begin{equation}     \label{eq:subdiff}
F(y) - F(x) \geq z (y-x)
\qquad \forall z \in \partial F(x),
\end{equation}
where $\partial F(x)$ denotes the subdifferential of $F$ at $x$. The
above inequality defines $\partial F(x)$, which is a subset of
$\erre$, in the sense that $z \in \partial F(x)$, by definition, if it
satisfies \eqref{eq:subdiff} for all $y \in \mathsf{D}(F)$.  If $F$ is
differentiable at $x \in \erre$, then $\partial F(x)$ reduces to a
singleton and coincides with $F'(x)$. The following mean-value theorem
holds (cf.~\cite[Theorem~2.3.4, p.~179]{lema}): if $F$ is
finite-valued, one has, for any $x$, $y \in \erre$,
\[
F(y) - F(x) = \int_x^y s(r)\,dr,
\]
where $s(r)$ is \emph{any} selection of the subdifferential
$\partial F(r)$.

Given a maximal monotone graph $f \subset \erre^2$ (see
{\S}\ref{ssec:m-accr} below), there exists a convex function $F$,
called the potential of $f$, such that $f = \partial F$. The converse
is also true, i.e. the map $x \mapsto \partial F(x)$ defines a maximal
monotone graph of $\erre^2$ for any convex function $F$.

The (Legendre-Fenchel) conjugate $F^*:\erre \to \erre \cup \{+\infty\}$
of the convex (proper, lower semicontinuous) function
$F:\erre \to \erre \cup \{+\infty\}$ is defined as
\[
F^*(x) := \sup_{y \in \mathsf{D}(F)} \big(xy - F(y)\bigr).
\]
$F^*$ is itself a convex (proper, lower semicontinuous) function. The
definition obviously implies $xy \leq F(x) + F^*(y)$ for all $x$,
$y \in \erre$, with equality if and only if $y \in \partial F(x)$,
which in turn is equivalent to $x \in \partial F^*(y)$.
Moreover, if $F$ is everywhere finite on $\erre$, then $F^*$ is
superlinear at infinity, i.e.
\[
\lim_{\abs{x}\to\infty} \frac{F^*(x)}{\abs{x}} = +\infty.
\]
In particular, if $\partial F(x) \neq \varnothing$ for all
$x \in \erre$, or, equivalently, the domain of $f:=\partial F$ is
$\erre$, then $F$ is finite-valued on $\erre$ and $F^*$ is superlinear
at infinity (see, e.g., \cite[Chapter~E]{lema} for all these facts).

\subsection{Duality mapping and differentiability of the norm}
Let $X$ be a Banach space with (topological) dual $X^*$. The duality
mapping of $X$ is the map
\begin{align*}
J: X &\to 2^{X^*}\\
   x &\mapsto \bigl\{ x^* \in X^*:\, \ip{x^*}{x} = \norm{x}_X^2 
= \norm{x^*}^2_{X^*} \bigr\}.
\end{align*}
If $X^*$ is strictly convex, then $J$ is single-valued and continuous
from $X$, endowed with the strong topology, to $X^*$, endowed with the
weak topology (i.e. $J$ is demicontinuous). Moreover, if $X^*$ is
uniformly convex, then $J$ is uniformly continuous on bounded subsets
of $X$.  For instance, all Hilbert spaces and all $L_q$ spaces with
$1 < q < \infty$ are uniformly convex (hence also strictly convex),
and their duality mappings are single-valued and demicontinuous. In
particular, if $X=L_q$, $1 < q < \infty$, one has
\[
J: u \mapsto \norm{u}_{L_q}^{2-q} \abs{u}^{q-2} u.
\]
On the other hand, the duality mapping of $L_1$ is multivalued: in
fact, if $X=L_1$, one has
\[
J: u \mapsto \bigl\{ v \in L_\infty:\, v \in \norm{u}_{L_1} \operatorname{sgn} u
\;\; \text{a.e.} \bigr\}.
\]
Moreover, one has $J = \partial\phi$, where
$\phi=\frac12 \norm{\cdot}^2_X$ and $\partial$ stands for the
subdifferential in the sense of convex analysis.
The ($q$-th power of the) norm of $L_q$ spaces with $q \geq 2$ is in
fact very regular: setting $\Phi_q := \norm{\cdot}_{L_q}^q$, one has
$\Phi_q \in C^2(L_q)$, with
\begin{gather*}
\Phi_q': L_q \to \cL(L_q,\erre) \simeq L_{q'},
\qquad
\Phi_q'': L_q \to \cL(L_q,\cL(L_q,\erre)) \simeq \cL_2(L_q),\\
\begin{split}
\Phi_q'(u): v &\longmapsto q \ip[\big]{\abs{u}^{q-2}u}{v} \equiv
q \int_Y \abs{u}^{q-2}uv\,d\mu,\\
\Phi_q''(u): (v,w) &\longmapsto q(q-1) \ip[\big]{\abs{u}^{q-2}v}{w} \equiv
q(q-1) \int_Y \abs{u}^{q-2}vw\,d\mu,
\end{split}
\end{gather*}
where $\cL_2(L_q)$ stands for the space of bilinear forms on $L_q$.
In particular, for any $u \in L_q$,
\begin{equation}\label{eq:nJ}
  \Phi_q'(u) = q \norm{u}_{L_q}^{q-2} J(u), \qquad
  \norm[\big]{\Phi'(x)}_{L_{q'}} = q \norm[\big]{x}_{L_{q}}^{q-1},
\end{equation}
and, by H\"older's inequality,
\begin{equation}
  \label{eq:n2}
  \norm{\Phi_q''(u)}_{\cL_2(L_q)} \leq q(q-1) \norm{u}^{q-2}_{L_q}.
\end{equation}
A detailed treatment of duality mappings and related geometric
properties of $L_q$ spaces can be found, for instance, in
\cite{Cioranescu}, while most results needed here are also recalled
in, e.g., \cite[Chapter~1]{Barbu}.

\subsection{$m$-accretive operators}     \label{ssec:m-accr}
A subset $A$ of $X \times X$ is called \emph{accretive} if, for every
$(x_1,y_1)$, $(x_2,y_2) \in A$, there exists $z \in J(x_1-x_2)$ such
that $\ip{y_1-y_2}{z} \geq 0$. An accretive set $A$ is called
$m$-accretive if $\mathsf{R}(I+A)=X$. One often says that $A$ is a
multivalued (nonlinear) mapping on $X$, rather than a subset of $X
\times X$.  Through the rest of this subsection, we shall assume that
$A$ is an $m$-accretive subset of $X \times X$.

The \emph{Yosida approximation} (or \emph{regularization}) of $A$ is
the family $\{A_\lambda\}_{\lambda>0}$ of (single-valued) operators on
$X$ defined by
\[
A_\lambda := \frac{1}{\lambda} \bigl(I-(I+\lambda A)^{-1}\bigr), 
\qquad \lambda>0.
\]
The following properties will be extensively used:
\begin{itemize}
\item[(a)] $A_\lambda$ is $m$-accretive;
\item[(b)] $A_\lambda \in \lip(X,X)$ with Lipschitz constant bounded
  by $2/\lambda$;
\item[(c)] $\norm{A_\lambda x} \leq \inf_{y \in Ax} \norm{y}$ for all
  $x \in X$;
\item[(d)] $A_\lambda x \in A(I+\lambda A)^{-1} x$ for all $x \in X$;
\item[(d')] if $A$ is single-valued and $X$, $X^*$ are uniformly
  convex, then $A_\lambda x \to Ax$ as $\lambda \to 0$ for all $x \in
  \dom(A)$.
\item[(e)] $(I+\lambda A)^{-1} \in \lip(X,X)$ with Lipschitz constant bounded
  by $1$;
\item[(f)] $(I+\lambda A)^{-1}x \to x$ as $\lambda \to 0$ for all $x
  \in \overline{\dom(A)}$.
\end{itemize}
If $X^*$ is uniformly convex, then the $m$-accretive set $A$ is
demiclosed, i.e. it is closed in $X \times X_w$, where $X_w$ stands
for $X$ endowed with its weak topology. More precisely, if $x_n \to x$
strongly in $X$ and $A_{\lambda_n}x_n \to y$ weakly in $X$ as $n \to
\infty$, then $(x,y) \in A$.

Let $X=L_q$, $1 \leq q < \infty$. If $g$ is a maximal
monotone graph in $\erre^2$, then the (multivalued) evaluation
operator $\overline{g}$ associated to $g$ is an $m$-accretive subset
of $X \times X$. The operator $\overline{g}$ is defined on $X$ as
\[
\overline{g}: u \mapsto \bigl\{ v \in X:\; v \in g(u) \quad \mu\text{-a.e.}
\bigr\}.
\]
Note that the graph of a (discontinuous) increasing function
$g_0:\erre \to \erre$ is a monotone subset of $\erre^2$, but it is not
maximal monotone. However, the graph $g \subset \erre^2$ defined by
\[
g(x)=
\begin{cases}
  g_0(x), & x \in \erre \setminus I,\\
  [g_0(x-), g_0(x+)], & x \in I,
\end{cases}
\]
where $I$ is the jump set of $g_0$, is maximal monotone and (clearly)
extends $g_0$. We shall not explicitly distinguish below among an
increasing function $g_0$, its maximal monotone extension $g$, and the
associated evaluation operator $\overline{g}$.

The proofs of the above facts (and much more) can be found, for
instance, in \cite[\S 2.3]{Barbu}.\footnote{Formula (3.12) in
  \emph{op.~cit.} contains a misprint: $A_{\lambda_n}x$ should be
  replaced by $A_{\lambda_n}x_n$.}

\subsection{$\gamma$-Radonifying operators}
We shall use only basic facts from the rich and powerful theory of
$\gamma$-Radonifying operators. For more information we refer to,
e.g., the survey \cite{vN-surv}.

Let $H$, $H'$ be real separable Hilbert spaces and $X$, $X'$ Banach
spaces. An operator $T \in \cL(H,X)$ is said to be
$\gamma$-Radonifying if there exists an orthonormal basis
$(h_n)_{n\in\enne}$ of $H$ such that
\[
\norm{T}_{\gamma(H,X)} := \biggl( \E'\norm[\Big]{%
\sum_{n\in\enne} \gamma_n Th_n} \biggr)^{1/2} < \infty,
\]
where $(\gamma_n)$ is a sequence of independent identically
distributed standard Gaussian random variable on a probability space
$(\Omega',\cF',\P')$. One can shows that $\norm{T}_{\gamma(H,X)}$ does
not depend on the choice of the orthonormal basis $(h_n)$. The set of
all $T \in \cL(H,X)$ such that $\norm{T}_{\gamma(H,X)}$ is finite is
itself a Banach space with norm $\norm{\cdot}_{\gamma(H,X)}$, and
$\gamma(H,X)$ is a two-sided ideal of $\cL(H,X)$, i.e.
$L \in \cL(X,X')$ and $R \in \cL(H',H)$ imply
\[
\norm{LTR}_{\gamma(H',X')} \leq 
\norm{L}_{\cL(X,X')} \norm{T}_{\gamma(H,X)} \norm{R}_{\cL(H',H)}.
\]
The following convergence result is a simple corollary of the ideal
property: if $L_n \to L$ strongly in $\cL(X,X')$ as $n \to \infty$,
i.e. $L_nx \to Lx$ in $X'$ for all $x \in X$, then
\[
\norm[\big]{L_nT-LT}_{\gamma(H,X')} \xrightarrow{n\to\infty} 0.
\]
If $X=L_q$, $q \geq 1$, by a simple application of the Khinchin-Kahane
inequalities it follows that $T \in \gamma(H,L_q)$ if and only if
\[
\norm[\big]{(Th_n)}_{L_q(\ell_2)} = \norm[\bigg]{%
\Bigl( \sum_{n\in\enne} \abs{Th_n}^2 \Bigr)^{1/2}}_{L_q} < \infty
\]
for all orthonormal bases $(h_n)$ of $H$, and
$\norm{T}_{\gamma(H,L_q)} \eqsim \norm{(Th_n)}_{L_q(\ell_2)}$. Moreover, the mapping
\begin{align*}
  L_q(H) &\longrightarrow \gamma(H,L_q)\\
  f &\longmapsto T_f: g \mapsto \ip{f(\cdot)}{g}_H
\end{align*}
is an isomorphism of Banach spaces, where one can take 
\[
f = \norm{(Th_n)}_{\ell_2} = \Bigl( \sum_{n\in\enne} \abs{Th_n}^2 \Bigr)^{1/2}.
\]

\subsection{Stochastic calculus in Banach spaces}
Let $X$ be a UMD Banach space. For any $1<p<\infty$ and progressively
measurable process $G \in \L_p(\gamma(L_2((0,T);H),X))$, the
stochastic integral of $G$ with respect to $W$ is a well-defined
$X$-valued local martingale that satisfies Burkholder inequality
\[
\E\sup_{t\leq T} \norm[\bigg]{\int_0^t G(s)\,dW(s)}^p_X \eqsim_{p,X}
\E\norm[\big]{G}^p_{\gamma(L_2((0,T);H),X)}.
\]
If $X$ has type 2 (this is the case if $X=L_q$, $q \geq 2$), one has
the continuous embedding\footnote{If $X=L_q$, $q \geq 2$, the only
  case of interest for us, the embedding is just an obvious
  consequence of Minkowski's inequality:
  $L_2(0,t;\gamma(H,L_q)) \simeq L_2(0,t;L_q(H)) \embed
  L_q(L_2(0,t;H)) \simeq \gamma(L_2((0,t);H),L_q)$.}
\[
L_2(0,T;\gamma(H,X)) \embed \gamma(L_2((0,T);H),X),
\]
hence
\begin{equation}
  \label{eq:pippo}
  \E\sup_{t \leq T} \norm[\bigg]{\int_0^t G(s)\,dW(s)}^p_X
  \lesssim_{p,X}
  \E \biggl( \int_0^T \norm[\big]{G(s)}^2_{\gamma(H,X)}\,ds\biggr)^{p/2}
\end{equation}
for all $p>0$ (the case $0<p\leq 1$ follows by Lenglart's domination
inequality, see~\cite{Lenglart}). Note that, if $X=L_q$, in view of
the isomorphism mentioned at the end of last subsection, the above
inequalities can be equivalently written only in terms of $L_q$
norms. In other words, for our purposes the use of
$\gamma$-Radonifying norms amounts only to adopting a convenient
language. For further details we refer to \cite{vNVW:integ} and
references therein.

\smallskip

We shall also need It\^o's formula for $L_q$-valued processes,
and we use the version of \cite{Brz:Ito}, which is valid for
UMD-valued processes. For our purposes, however, previous less general
versions (cited in \cite{Brz:Ito}) would also do, as well as the very
specific one of \cite{Krylov:ItoLp}, where only the $q$-th power
of the norm is considered.
Let us first introduce some notation: if $\Phi \in \cL_2(X)$ is a
bilinear form on $X$ and $T \in \gamma(H,X)$, we set
\[
\tr_T \Phi := \sum_{n\in\enne} \Phi(Th_n,Th_n),
\]
for which it is easily seen that
\begin{equation}
  \label{eq:tr-ineq}
  \abs{\tr_T \Phi} \leq \norm{\Phi}_{\cL_2(X)} \norm{T}^2_{\gamma(H,X)}. 
\end{equation}

\begin{thm}     \label{thm:Ito}
  Let $X$ be a UMD Banach space, and consider the $X$-valued process
  \[
  u(t) = u_0 + \int_0^t b(s)\,ds + \int_0^t G(s)\,dW(s),
  \]
  where
  \begin{itemize}
  \item[\emph{(a)}] $u_0:\Omega \to X$ is
    $\cF_0$-measurable;
  \item[\emph{(b)}] $b:\Omega \times [0,T] \to X$ is measurable, adapted and
    such that $b \in L_1(0,T;X)$;
  \item[\emph{(c)}] $G:\Omega \times [0,T] \to \cL(H,X)$ is
    $H$-measurable, adapted, stochastically integrable with respect to
    $W$, and such that $G \in L_2(0,T;\gamma(H,X))$.
  \end{itemize}
  For any $\varphi \in C^2(X)$, one has
  \begin{align*}
    \varphi(u(t)) &= \varphi(u_0) + \int_0^t \varphi'(u(s))b(s)\,ds
    + \int_0^t \varphi'(u(s))G(s)\,dW(s)\\
    &\quad + \frac12 \int_0^t \tr_{G(s)} \varphi''(u(s))\,ds.
  \end{align*}
\end{thm}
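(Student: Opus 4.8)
The plan is the classical one: reduce to nice integrands by localization and approximation, prove the identity for step processes via a second-order Taylor expansion along a partition, and then pass to the limit. First I would localize with $\tau_N := \inf\bigl\{t \le T:\, \norm{u(t)}_X + \int_0^t\norm{b(s)}_X\,ds + \int_0^t\norm{G(s)}^2_{\gamma(H,X)}\,ds > N\bigr\}$; since the claimed identity at time $t$ involves the data only on $[0,t]$, it is enough to treat the case in which $b \in \L_p(L_1(0,T;X))$ and $G \in \L_p(L_2(0,T;\gamma(H,X)))$ for every $p<\infty$ and, after composing $\varphi$ with a smooth cut-off equal to the identity on the ball of radius $N$ (legitimate since $u$ stays in that ball before $\tau_N$), in which $\varphi$, $\varphi'$, $\varphi''$ are bounded and uniformly continuous. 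Then, approximating $b$, $G$ by adapted step processes $b_m \to b$, $G_m \to G$ (whose values may be taken bounded) and writing $u_m := u_0 + \int_0^\cdot b_m\,ds + \int_0^\cdot G_m\,dW$ for the associated, path-continuous processes, one gets $u_m \to u$ in $\H_p(X)$ by the deterministic estimate for the drift and by \eqref{eq:pippo} for the stochastic term.

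For fixed $m$, take a partition $0=t_0<\dots<t_J=t$ of mesh $\delta$ refining the jump times of $b_m$ and $G_m$, and write $b_m \equiv \beta_j$, $G_m \equiv \Gamma_j$ on $[t_j,t_{j+1})$, both $\cF_{t_j}$-measurable, so that $\Delta_j u := u_m(t_{j+1})-u_m(t_j) = \beta_j\,\Delta t_j + \Gamma_j\,\Delta_j W$ with $\Delta_j W := W(t_{j+1})-W(t_j)$, $\Delta t_j := t_{j+1}-t_j$. Taylor's theorem with integral remainder gives
\[
\varphi(u_m(t)) - \varphi(u_0) = \sum_j \varphi'(u_m(t_j))\,\Delta_j u
+ \tfrac12 \sum_j \varphi''(u_m(t_j))(\Delta_j u,\Delta_j u) + \sum_j R_j,
\]
with $\abs{R_j} \le \tfrac12\,\omega\bigl(\sup_j\norm{\Delta_j u}_X\bigr)\,\norm{\Delta_j u}_X^2$, $\omega$ the modulus of continuity of $\varphi''$. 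As $\delta\to0$ the first sum converges to $\int_0^t\varphi'(u_m)b_m\,ds + \int_0^t\varphi'(u_m)G_m\,dW$, the last being a scalar It\^o integral, well defined because $\varphi'(u_m(\cdot))G_m(\cdot)\in\gamma(H,\erre)\simeq H$ by the ideal property of $\gamma$. Expanding $(\Delta_j u,\Delta_j u)$ in the second sum produces: the pure-drift term, which is $O(\delta)$ and vanishes; the mixed term $\varphi''(u_m(t_j))(\beta_j\Delta t_j,\Gamma_j\Delta_j W)$, conditionally centered, whose sum vanishes in $L_2(\Omega)$ by orthogonality; and $\sum_j\varphi''(u_m(t_j))(\Gamma_j\Delta_j W,\Gamma_j\Delta_j W)$. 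Since, conditionally on $\cF_{t_j}$, $\Gamma_j\Delta_j W$ has the law of $\sqrt{\Delta t_j}\sum_n\gamma_n\Gamma_j h_n$, the conditional expectation of this last sum is $\sum_j\Delta t_j\,\tr_{\Gamma_j}\varphi''(u_m(t_j))$, which converges to $\int_0^t\tr_{G_m(s)}\varphi''(u_m(s))\,ds$ by \eqref{eq:tr-ineq} and continuity of $\varphi''$, while the remaining conditionally centered, conditionally orthogonal martingale part has second moment $\lesssim \delta\,\norm{\varphi''}_\infty^2\,\sup_j\norm{\Gamma_j}^2_{\gamma(H,X)}\int_0^t\norm{G_m}^2_{\gamma(H,X)}\,ds$ by Gaussian (Khinchin--Kahane) moment bounds, hence vanishes. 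Finally $\sum_j\abs{R_j}$ vanishes in probability since $\sup_j\norm{\Delta_j u}_X\to0$ a.s.\ (path continuity of $u_m$) and $\sum_j\norm{\Delta_j u}_X^2$ is bounded in probability. This proves the formula for each $u_m$; letting $m\to\infty$ and using boundedness and uniform continuity of $\varphi,\varphi',\varphi''$, Burkholder's inequality (to pass to the limit in the stochastic integral, after checking $\varphi'(u_m)G_m\to\varphi'(u)G$ in $\L_2(L_2(0,T;H))$), \eqref{eq:tr-ineq}, and dominated convergence for the Bochner integrals, then removing the localization as $N\to\infty$, yields the claim.

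The main obstacle is the treatment of the second-order sum in the step-process case: identifying its limit as precisely the trace term $\tfrac12\int_0^t\tr_{G(s)}\varphi''(u(s))\,ds$ — this is where the type-$2$/UMD geometry of $X$ and the Gaussian structure of the increments $\Gamma_j\Delta_j W$ are essential — and, within that, checking that the conditionally centered martingale remainder is negligible, which is the point forcing the boundedness of the step integrands (or, alternatively, a separate uniform-integrability argument) and carrying most of the technical bookkeeping.
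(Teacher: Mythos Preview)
The paper does not prove this theorem: it is quoted as a known result, attributed to \cite{Brz:Ito} (with earlier, less general versions mentioned as also sufficient). There is therefore no in-text argument to compare against; your sketch is a self-contained attempt at the classical proof, and its overall strategy --- localize, reduce to adapted step processes, Taylor-expand along a partition, identify the second-order term via the Gaussian structure of the increments, then pass to the limit --- is the standard one and is essentially sound.

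One step, however, does not go through as written. You propose to ``compose $\varphi$ with a smooth cut-off equal to the identity on the ball of radius $N$'' so as to reduce to $\varphi$, $\varphi'$, $\varphi''$ bounded and uniformly continuous. In an infinite-dimensional Banach space such a smooth map $X\to X$ with bounded derivatives, equal to the identity on a ball and with bounded range, is not available in general; already the existence of $C^2$ bump functions is a nontrivial geometric constraint, and a smooth ``retraction'' of this type is more delicate still. Closed balls are not compact, so mere continuity of $\varphi''$ does not give you boundedness or uniform continuity on $B_N$ either. The usual remedy is to work pathwise: after localization each approximating path $t\mapsto u_m(t,\omega)$ is continuous on $[0,T]$, hence has compact range $K_\omega\subset X$, and $\varphi''$ is automatically bounded and uniformly continuous on a neighbourhood of $K_\omega$; this is enough for the remainder estimate at fixed $m$, and the passage $m\to\infty$ can then be handled by a further stopping-time argument rather than by globally truncating $\varphi$. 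With this adjustment the rest of your outline, including the identification of the trace term and the treatment of the conditionally centered remainders, is as expected.
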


\subsection{Estimates for linear equations}
Given a Banach space $X$ and a linear $m$-accretive operator $A$ on
$X$, for any $X$-valued or $\cL(H,X)$-mapping $h$, we shall write, for
any $\varepsilon>0$, $\ep{h}:=(I+\varepsilon A)^{-1}h$.

We first prove an estimate that will be used repeatedly in the
following.
\begin{prop}
  \label{prop:gf}
  Let $A$ be a linear $m$-accretive operator on $L_q$, and consider
  the unique mild solution $u$ to the equation
  \[
  du(t) + Au(t) = b(t)\,dt + G(t)\,dW(t), \qquad u(0)=u_0,
  \]
  where $u_0$, $b$, and $G$ satisfy the assumptions of Theorem
  \ref{thm:Ito} (with $X=L_q)$. If $u \in L_\infty(L_q)$, then
  \begin{align*}
  \norm[\big]{u(t)}_{L_q}^q &\leq \norm[\big]{u_0}_{L_q}^q 
  + \int_0^t \Phi'_q(u(s))b(s)\,ds
  + \int_0^t \Phi'_q(u(s))G(s)\,dW(s)\\
  &\quad + \frac12 q(q-1) \int_0^t \norm[\big]{G(s)}^2_{\gamma(H,L_q)}%
  \norm[\big]{u(s)}_{L_q}^{q-2}\,ds
\end{align*}
\end{prop}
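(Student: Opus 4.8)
The plan is to apply the It\^o formula of Theorem \ref{thm:Ito} to the function $\Phi_q = \norm{\cdot}_{L_q}^q \in C^2(L_q)$, but not directly to $u$ itself: the process $u$ is only a \emph{mild} solution, so $Au$ need not be well-defined as an $L_q$-valued integrable process and $u$ is not a genuine It\^o process in the sense required by the theorem. The standard device, as in the estimates for linear equations advertised just above the statement, is to regularize: set $\ep{u} := (I+\varepsilon A)^{-1}u$, and similarly $\ep{u_0}$, $\ep{b}$, $\ep{G}$. Since $(I+\varepsilon A)^{-1}$ commutes with $A$ and is a bounded linear operator, $\ep{u}$ is a \emph{strong} solution of $d\ep{u} + A\ep{u}\,dt = \ep{b}\,dt + \ep{G}\,dW$ with $\ep{u}(0)=\ep{u_0}$, and now $A\ep{u} = A(I+\varepsilon A)^{-1}u$ is a bona fide $L_q$-valued process to which Theorem \ref{thm:Ito} applies. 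This gives
\[
\Phi_q(\ep{u}(t)) = \Phi_q(\ep{u_0}) + \int_0^t \Phi_q'(\ep{u}(s))\bigl(\ep{b}(s) - A\ep{u}(s)\bigr)\,ds + \int_0^t \Phi_q'(\ep{u}(s))\ep{G}(s)\,dW(s) + \frac12 \int_0^t \tr_{\ep{G}(s)}\Phi_q''(\ep{u}(s))\,ds.
\]

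Next I would handle the three groups of terms. The accretive term: since $A$ is linear $m$-accretive on $L_q$ and $\Phi_q'(\ep{u}) = q\norm{\ep{u}}_{L_q}^{q-2}J(\ep{u})$ by \eqref{eq:nJ}, accretivity of $A$ gives $\ip{A\ep{u}(s)}{J(\ep{u}(s))} \geq 0$ (using that $\ep{u}(s) \in \dom(A)$ and $0 \in A0$, or directly that linear $m$-accretive operators satisfy $\ip{Ax}{J(x)}\geq 0$), hence $\int_0^t \Phi_q'(\ep{u}(s))A\ep{u}(s)\,ds \geq 0$ and may be dropped to obtain an inequality. The trace term: by \eqref{eq:tr-ineq} and \eqref{eq:n2}, $\abs{\tr_{\ep{G}(s)}\Phi_q''(\ep{u}(s))} \leq q(q-1)\norm{\ep{u}(s)}_{L_q}^{q-2}\norm{\ep{G}(s)}_{\gamma(H,L_q)}^2$; I would keep it with a $+$ sign after using this bound. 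This yields the claimed inequality but with every occurrence of $u_0$, $b$, $G$, $u$ replaced by its $\varepsilon$-regularization.

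The final and most delicate step is passing to the limit $\varepsilon \to 0$. By property (f) of $\S$\ref{ssec:m-accr} (and the contractivity in (e)), $(I+\varepsilon A)^{-1} \to I$ strongly on $L_q$, so $\ep{u_0} \to u_0$ in $L_q$ a.s., $\ep{u}(t) \to u(t)$ in $L_q$, $\ep{b}(s) \to b(s)$ in $L_q$ for a.e.\ $s$, and $\ep{G}(s) \to G(s)$ in $\gamma(H,L_q)$ (the last using the ideal-property corollary recorded in $\S$\ref{ssec:m-accr}, since $(I+\varepsilon A)^{-1} \to I$ strongly). Using the assumed bound $u \in L_\infty(L_q)$ together with the contractivity of $(I+\varepsilon A)^{-1}$, all integrands are dominated uniformly in $\varepsilon$: $\abs{\Phi_q'(\ep{u}(s))\ep{b}(s)} \lesssim \norm{u}_{L_\infty(L_q)}^{q-1}\norm{b(s)}_{L_q} \in L_1(0,T)$, and the trace integrand is dominated by $q(q-1)\norm{u}_{L_\infty(L_q)}^{q-2}\norm{G(s)}_{\gamma(H,L_q)}^2 \in L_1(0,T)$ by hypothesis (c); so dominated convergence disposes of the Bochner integrals pathwise. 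For the stochastic integral, I would argue that $\Phi_q'(\ep{u})\ep{G} \to \Phi_q'(u)G$ in $L_2(0,T;\gamma(H,L_q))$ in probability with a uniform $L_2$-domination (same bounds), whence the stochastic integrals converge in probability, using the Burkholder/It\^o isometry of the preceding subsection; passing to a subsequence converging a.s.\ if needed. The main obstacle is exactly this uniform-in-$\varepsilon$ control combined with the fact that $\Phi_q'$ is only continuous (not Lipschitz) on $L_q$ when $q>2$ — but since $\ep{u}(s) \to u(s)$ in $L_q$ and $\Phi_q' \in C(L_q; L_{q'})$, continuity plus the uniform bound from $u \in L_\infty(L_q)$ suffices to pass the limit inside, and linearity of $A$ is what makes the regularization produce a strong solution in the first place.
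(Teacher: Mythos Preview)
Your proposal is correct and follows essentially the same route as the paper: regularize via $(I+\varepsilon A)^{-1}$ to obtain a strong solution, apply It\^o's formula, drop the $A$-term by accretivity, bound the trace term via \eqref{eq:n2}--\eqref{eq:tr-ineq}, and pass to the limit using the strong convergence of the resolvent together with the $L_\infty(L_q)$ bound on $u$ for dominated convergence. The only cosmetic differences are that the paper bounds $\norm{\ep{u_0}}_{L_q}$ and the trace term directly by contractivity (rather than passing to the limit), and phrases the convergence of the stochastic integral via $[M_\varepsilon-M,M_\varepsilon-M](T)\to 0$ in probability; note also that $\Phi_q'(\ep{u})\ep{G}$ takes values in $\gamma(H,\erre)$, not $\gamma(H,L_q)$.
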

\begin{proof}
  It is not difficult to verify that $\ep{u}$ is the unique strong
  solution to
  \[
  d\ep{u} + A\ep{u} = \ep{b}\,dt + \ep{G}\,dW,
  \qquad \ep{u}(0)=\ep{u}_0
  \]
  (cf.~e.g.~\cite[Lemma~6]{cm:SIMA12}). It\^o's formula then
  yields\footnote{From now on we shall occasionally omit the
    indication of the time parameter, if no confusion may arise, for
    notational compactness.}
  \begin{align*}
    \norm[\big]{\ep{u}(t)}_{L_q}^q + \int_0^t \Phi'_q(\ep{u}) A\ep{u}\,ds &=
    \norm[\big]{\ep{u}_0}_{L_q}^q + \int_0^t \Phi'_q(\ep{u}) \ep{b}\,ds
    +  \int_0^t \Phi'_q(\ep{u}) \ep{G}\,dW\\
    &\quad + \frac12 \int_0^t \tr_{\ep{G}} \Phi''_q(\ep{u})\,ds,
  \end{align*}
  where
  $\Phi'_q(\ep{u}) A\ep{u} = q \norm[\big]{\ep{u}}_{L_q}^{q-2}
  \ip[\big]{A\ep{u}}{J(\ep{u})} \geq 0$
  by accretivity of $A$ on $L_q$, and
  $\norm[\big]{\ep{u}_0}_{L_q} \leq \norm[\big]{u_0}_{L_q}$ by
  contractivity of $(I+\varepsilon A)^{-1}$ on $L_q$. We are thus left
  with
\begin{align*}
    \norm[\big]{\ep{u}(t)}_{L_q}^q &\leq
    \norm[\big]{u_0}_{L_q}^q + \int_0^t \Phi'_q(\ep{u}) \ep{b}\,ds
    +  \int_0^t \Phi'_q(\ep{u}) \ep{G}\,dW\\
    &\quad + \frac12 \int_0^t \tr_{\ep{G}} \Phi''_q(\ep{u})\,ds.
  \end{align*}
  We are now going to pass to the limit as $\varepsilon \to 0$ in this
  inequality. One clearly has
  $\norm[\big]{\ep{u}(t)}_{L_q} \to \norm[\big]{u(t)}_{L_q}$ as
  $\varepsilon \to 0$ because $(I+\varepsilon A)^{-1}$ converges
  strongly to the identity in $\cL(L_q)$ as $\varepsilon \to 0$.
  By the triangle inequality,
  \begin{align*}
    \sup_{t \leq T} \abs[\bigg]{\int_0^t \Phi'_q(\ep{u}) \ep{b}\,ds
    - \int_0^t \Phi'_q(u)b\,ds} &\leq
    \int_0^T \abs[\big]{\bigl(\Phi'_q(\ep{u})-\Phi'_q(u)\bigr)\ep{b}}\,ds\\
    &\quad + \int_0^T \abs[\big]{\Phi'_q(u)(\ep{b}-b)}\,ds.
  \end{align*}  
  The following reasoning is to be understood to hold for each fixed
  $\omega$ in a subset of $\Omega$ of full $\P$-measure. Since
  $\ep{u}(s) \to u(s)$ and $\ep{b}(s) \to b(s)$ in $L_q$, hence also
  in measure, for all $s\in[0,T]$, and $\Phi_q'$ is continuous, it
  follows that
  \[
  \abs[\big]{\bigl(\Phi'_q(\ep{u}(s))-\Phi'_q(u(s))\bigr)\ep{b}(s)}
  \xrightarrow{\varepsilon \to 0} 0
  \]
  in measure for all $s$. Moreover,
  \begin{align*}
  \abs[\big]{\bigl(\Phi'_q(\ep{u}(s))-\Phi'_q(u(s))\bigr)\ep{b}(s)} &\leq
  \norm[\big]{\Phi'_q(\ep{u}(s))-\Phi'_q(u(s))}_{L_{q'}} \,
  \norm[\big]{b(s)}_{L_q}\\
  &\lesssim \norm[\big]{u(s)}_{L_q}^{q-1}  \, \norm[\big]{b(s)}_{L_q}
  \leq \norm[\big]{u}_{L_\infty(L_q)}^{q-1}\, \norm[\big]{b(s)}_{L_q}
  \end{align*}
  and
  $\norm[\big]{u}_{L_\infty(L_q)}^{q-1}\, \norm[\big]{b}_{L_q} \in
  L_1(0,T)$, which imply, by the dominated convergence theorem,
  \[
  \int_0^T
  \abs[\big]{\bigl(\Phi'_q(\ep{u})-\Phi'_q(u)\bigr)\ep{b}}\,ds
  \xrightarrow{\varepsilon \to 0} 0.
  \]
  By a completely analogous argument one shows that
  $\int_0^T \abs[\big]{\Phi'_q(u)(\ep{b}-b)}\,ds
  \xrightarrow{\varepsilon \to 0} 0$, hence also that
  \[
  \int_0^t \Phi'_q(\ep{u}) \ep{b}\,ds \xrightarrow{\varepsilon \to 0}
  \int_0^t \Phi'_q(u)b\,ds.
  \]
  Let us now show that
  \[
  M_\varepsilon(t) :=  \int_0^t \Phi'_q(\ep{u})\ep{G}\,dW 
  \xrightarrow{\varepsilon \to 0} M(t) :=
  \int_0^t \Phi'_q(u) G\,dW
  \]
  in probability. Recall that, for the sequence of continuous local
  martingales $(M_\varepsilon-M)$, one has
  $\sup_{t \leq T} \abs{M_\varepsilon(t)-M(t)} \to 0$ in probability if and
  only if $[M_\varepsilon-M,M_\varepsilon-M](T) \to 0$ in probability (see
  e.g. \cite[Proposition~17.6]{kall}). We have
  \[
  [M_\varepsilon-M,M_\varepsilon-M](T) = \int_0^T
  \norm[\big]{\Phi'_q(\ep{u})\ep{G} - \Phi'_q(u)G}^2_{\gamma(H,\erre)}\,ds,
  \]
  and, by the triangle inequality,
  \begin{multline*}
  \norm[\big]{\Phi'_q(\ep{u})\ep{G} - \Phi'_q(u)G}_{\gamma(H,\erre)}\\
  \leq \norm[\big]{\Phi'_q(\ep{u})-\Phi'_q(u)}_{L_{q'}}
  \norm[\big]{\ep{G}}_{\gamma(H,L_q)} +
  \norm[\big]{\Phi'_q(u)}_{L_{q'}}
  \norm[\big]{\ep{G}-G}_{\gamma(H,L_q)},
  \end{multline*}
  where
  \[
  \norm[\big]{\Phi'_q(\ep{u})-\Phi'_q(u)}_{L_{q'}}
  \norm[\big]{\ep{G}}_{\gamma(H,L_q)} \leq
  \norm[\big]{\Phi'_q(\ep{u})-\Phi'_q(u)}_{L_{q'}}
  \norm[\big]{G}_{\gamma(H,L_q)} \xrightarrow{\varepsilon \to 0} 0
  \]
  pointwise in the time variable, and
  $\norm{\ep{G}-G}_{\gamma(H,L_q)} \to 0$
  because $(I+\varepsilon A)^{-1}$ converges strongly to the identity
  in $\cL(L_q)$.  The above also yields
  \[
  \norm[\big]{\Phi'_q(\ep{u})\ep{G} - \Phi'_q(u)G}_{\gamma(H,\erre)} \lesssim
  \norm{u}_{L_q}^{q-1} \norm{G}_{\gamma(H,L_q)},
  \]
  where, since $G \in L_2(0,T;\gamma(H,L_q))$ and $u \in L_\infty(L_q)$,
  \[
  \int_0^T \norm{u(s)}^{2(q-1)}_{L_q} \norm{G(s)}^2_{\gamma(H,L_q)}\,ds \leq
  \norm{u}^{2(q-1)}_{L_\infty(L_q)} \int_0^T \norm{G(s)}^2_{\gamma(H,L_q)}\,ds
  < \infty.
  \]
  Therefore, by the dominated convergence theorem,
  \[
  [M_\varepsilon-M,M_\varepsilon-M](T) \xrightarrow{\varepsilon \to 0} 0
  \]
  in probability.
  Finally, by \eqref{eq:n2}, \eqref{eq:tr-ineq}, and the ideal
  property of $\gamma(H,L_q)$,
  \begin{align*}
  \int_0^t \tr_{\ep{G}(s)} \Phi''_q(\ep{u}(s))\,ds &\leq
  q(q-1) \int_0^t \norm[\big]{\ep{G}(s)}^2_{\gamma(H,L_q)}
     \norm[\big]{\ep{u}(s)}_{L_q}^{q-2}\,ds\\
  &\leq q(q-1) \int_0^t \norm[\big]{G(s)}^2_{\gamma(H,L_q)}
     \norm[\big]{u(s)}_{L_q}^{q-2}\,ds.
  \qedhere
  \end{align*}
\end{proof}

We now establish a maximal inequality for stochastic convolutions that
might be interesting in its own right (see Remark \ref{rmk:sconv}
below).  We shall use the following notation, already used in the
Introduction:
\[
S \diamond G (t) := \int_0^t S(t-s)G(s)\,dW(s).
\]
\begin{thm}\label{thm:sconv}
  Let $p>0$ and $q \geq 2$. If $G$ satisfies the hypothesis of
  Theorem~\ref{thm:Ito}, then the stochastic convolution
  $S \diamond G$ has (a modification with) continuous paths and
  \[
  \E\sup_{t\leq T} \norm[\big]{S \diamond G(t)}^p_{L_q}
  \lesssim \E \biggl( \int_0^T \norm[\big]{G(s)}^2_{\gamma(H,L_q)}\,ds
  \biggr)^{p/2}
  \]
\end{thm}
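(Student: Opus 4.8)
The plan is to approximate $A$ by its Yosida regularisation, to obtain from Proposition~\ref{prop:gf} a uniform estimate for the regularised stochastic convolutions, and then to pass to the limit; both the path-continuity and the estimate will come out of this argument. We may assume the right-hand side is finite, for otherwise there is nothing to prove; we fix a version of $G$ with $G(\omega)\in L_2(0,T;\gamma(H,L_q))$ for every $\omega$, so that $\mathcal I:=\int_0^T\|G(s)\|^2_{\gamma(H,L_q)}\,ds<\infty$ a.s.\ and $\E\mathcal I^{p/2}<\infty$.

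For $\lambda>0$ set $u_\lambda:=S_\lambda\diamond G$, where $S_\lambda$ is the contraction semigroup generated by $-A_\lambda$. Since $A_\lambda$ is linear, $m$-accretive and bounded, $u_\lambda$ is the unique mild solution of $du_\lambda+A_\lambda u_\lambda\,dt=G\,dW$, $u_\lambda(0)=0$; its drift being bounded and globally Lipschitz, $u_\lambda$ has continuous paths, hence $u_\lambda\in L_\infty(0,T;L_q)$ a.s. Proposition~\ref{prop:gf}, applied with $A_\lambda$ in place of $A$ and $b\equiv0$, $u_0=0$, then gives, for all $t\in[0,T]$,
\[
\|u_\lambda(t)\|_{L_q}^q\le\int_0^t\Phi_q'(u_\lambda(s))G(s)\,dW(s)+\tfrac12 q(q-1)\int_0^t\|G(s)\|^2_{\gamma(H,L_q)}\|u_\lambda(s)\|_{L_q}^{q-2}\,ds.
\]
Introduce the stopping times $\rho_n:=\inf\{t:\|u_\lambda(t)\|_{L_q}\ge n\ \text{or}\ \int_0^t\|G\|^2_{\gamma(H,L_q)}\,ds\ge n\}\wedge T$ (which depend on $\lambda$ and increase to $T$ a.s.) and put $Z_n:=\sup_{t\le T}\|u_\lambda(t\wedge\rho_n)\|_{L_q}\le n$, $\mathcal I_n:=\int_0^{\rho_n}\|G\|^2_{\gamma(H,L_q)}\,ds\le\mathcal I$. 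Evaluating the inequality at $t\wedge\rho_n$, using $\|u_\lambda(s)\|_{L_q}^{q-2}\le Z_n^{q-2}$ for $s\le\rho_n$ (here $q\ge2$ enters) and absorbing a small multiple of $Z_n^q$ by Young's inequality, we get $Z_n^q\le 2\sup_{t\le T}\big|\int_0^{t\wedge\rho_n}\Phi_q'(u_\lambda)G\,dW\big|+c_q\mathcal I_n^{q/2}$; raising to the power $p/q$ and taking expectations,
\[
\E Z_n^p\lesssim_{p,q}\E\Big(\sup_{t\le T}\Big|\int_0^{t\wedge\rho_n}\Phi_q'(u_\lambda)G\,dW\Big|\Big)^{p/q}+\E\mathcal I_n^{p/2}.
\]
The integrand $\mathbf 1_{[0,\rho_n]}\Phi_q'(u_\lambda)G$ has bounded $\gamma(H,\erre)$-norm along $[0,\rho_n]$, so \eqref{eq:pippo} with $X=\erre$, \eqref{eq:nJ} and the ideal property of $\gamma(H,L_q)$ bound the first term on the right by $\lesssim\E\big(\int_0^{\rho_n}\|u_\lambda(s)\|_{L_q}^{2(q-1)}\|G(s)\|^2_{\gamma(H,L_q)}\,ds\big)^{p/(2q)}\lesssim_q\E\big(Z_n^{(q-1)p/q}\mathcal I_n^{p/(2q)}\big)$, and another use of Young's inequality makes this $\le\eta\,\E Z_n^p+c_{q,\eta}\E\mathcal I_n^{p/2}$ for arbitrary $\eta>0$. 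Since $Z_n\le n$ makes $\E Z_n^p$ finite, choosing $\eta$ small enough to absorb it and then letting $n\to\infty$ (monotone convergence, $Z_n\uparrow\sup_{t\le T}\|u_\lambda(t)\|_{L_q}$) yields $\E\sup_{t\le T}\|u_\lambda(t)\|_{L_q}^p\le C_{p,q}\,\E\mathcal I^{p/2}$, with $C_{p,q}$ independent of $\lambda$.

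To conclude, let $\lambda\to0$. Since $S_\lambda(r)\to S(r)$ strongly in $\cL(L_q)$ for every $r\ge0$, the convergence property of $\gamma$-Radonifying operators recalled in the preliminaries gives $\|(S_\lambda(t-s)-S(t-s))G(s)\|_{\gamma(H,L_q)}\to0$ for each $s$, whence $\int_0^t\|(S_\lambda(t-s)-S(t-s))G(s)\|^2_{\gamma(H,L_q)}\,ds\to0$ a.s.\ by dominated convergence; combined with the continuity of the stochastic integral (cf.~\eqref{eq:pippo} and a stopping-time localisation), this shows $u_\lambda\to S\diamond G$ uniformly on $[0,T]$ in probability. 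Along a subsequence the convergence is a.s.\ uniform on $[0,T]$, which exhibits a continuous modification of $S\diamond G$ and, by Fatou's lemma, transfers the bound above to $S\diamond G$, proving the theorem. (Path-continuity could alternatively be obtained directly via the factorization method, which uses only that $-A$ generates a contraction $C_0$-semigroup.)

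The step I expect to be the main obstacle is the absorption: one may not cancel $\E\sup_{t\le T}\|u_\lambda(t)\|^p_{L_q}$ from the two sides of the inequality without first knowing it to be finite. This is why we reduce to a finite right-hand side and localise by the stopping times $\rho_n$ so that every intermediate quantity is bounded, derive the estimate with constants independent of $n$ (and of $\lambda$), and only afterwards pass to the limits $n\to\infty$ and $\lambda\to0$.
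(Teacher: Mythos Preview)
Your derivation of the maximal estimate for the approximations $u_\lambda=S_\lambda\diamond G$ is correct, and the stopping-time localisation handles the absorption issue you flag at the end. The gap lies in the passage $\lambda\to0$. You argue that $\int_0^t\|(S_\lambda(t-s)-S(t-s))G(s)\|^2_{\gamma(H,L_q)}\,ds\to0$ for each fixed $t$ and then invoke ``continuity of the stochastic integral (cf.~\eqref{eq:pippo})'' to deduce that $u_\lambda\to S\diamond G$ \emph{uniformly} on $[0,T]$ in probability. But $u_\lambda(t)-(S\diamond G)(t)=\int_0^t(S_\lambda(t-s)-S(t-s))G(s)\,dW(s)$ is not of the form $\int_0^t H(s)\,dW(s)$ for a single process $H$ independent of $t$, so \eqref{eq:pippo} does not apply; you only obtain convergence for each fixed $t$. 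Nor does your estimate give that $(u_\lambda)_\lambda$ is Cauchy in $\H_p(L_q)$, since $u_\lambda-u_\mu$ is not a stochastic convolution with a single semigroup. Without uniform convergence you cannot transfer path-continuity, and without continuity of $S\diamond G$ you cannot pass from $\sup$ over a countable dense set to $\sup_{t\le T}$ in the Fatou step. The factorisation method you mention as an alternative requires analyticity of the semigroup, which is not assumed.

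The paper sidesteps this by regularising $G$ rather than $A$: setting $G^\varepsilon:=(I+\varepsilon A)^{-1}G$, one has $G^\varepsilon\in L_2(0,T;\gamma(H,\mathsf{D}(A)))$, so $u^\varepsilon:=S\diamond G^\varepsilon$ is a \emph{strong} solution and the estimate is proved for it directly. The point is that $u^\varepsilon-u^{\varepsilon'}=S\diamond(G^\varepsilon-G^{\varepsilon'})$ is again a stochastic convolution with the \emph{same} semigroup $S$, so the very estimate just proved shows $(u^\varepsilon)$ is Cauchy in $\H_p(L_q)$; and the identity $u^\varepsilon=(I+\varepsilon A)^{-1}(S\diamond G)$ identifies the limit as $S\diamond G$, yielding both continuity and the estimate in one stroke. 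Your Yosida approximation of $A$ destroys this linearity in $G$, which is why the Cauchy step becomes genuinely hard.
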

\begin{proof}
  We proceed in two steps, first assuming that $G$ takes values in
  $\mathsf{D}(A)$, then removing this assumption.
  \smallskip\par\noindent
  \textsc{Step 1.} Let us assume for the moment that
  $G \in L_2(0,T;\gamma(H,\mathsf{D}(A)))$. As in the proof of
  Proposition \ref{prop:gf}, it is easy to see that $S \diamond G$ is
  the unique strong solution to
  \[
  du(t) + Au(t)\,dt = G(t)\,dW(t), \qquad u(0)=0.
  \]
  Then It\^o's formula yields
  \[
  \norm[\big]{u(t)}_{L_q}^q + \int_0^t \Phi'_q(u) Au\,ds =
  \int_0^t \Phi'_q(u) G\,dW
  + \frac12 \int_0^t \tr_G \Phi''_q(u)\,ds.
  \]
  Setting
  \[
  v := \norm[\big]{u}_{L_q}^q, \qquad
  b := \frac12 \tr_G \Phi''_q(u) - \Phi'_q(u) Au, \qquad
  g := \Phi'_q(u) G,
  \]
  we can write
  \[
  v(t) = \int_0^t b(s)\,ds + \int_0^t g(s)\,dW(s).
  \]
  Let $\alpha \geq 1$ be arbitrary but fixed. Then
  $\varphi:x \mapsto x^{2\alpha} \in C^2$ with
  \[
  \varphi'(x) = 2\alpha x^{2\alpha-1}, \qquad
  \varphi''(x) = 2\alpha (2\alpha-1) x^{2(\alpha-1)}.
  \]
  Therefore, by It\^o's formula for real processes,
  \begin{align*}
  \norm[\big]{u(t)}_{L_q}^{2\alpha q} = \varphi(v(t)) &= 
  \int_0^t \Bigl( \varphi'(v(s)) b(s) 
  + \frac12 \varphi''(v(s)) \norm{g(s)}^2_{\gamma(H,\erre)}
    \Bigr)\,ds\\
  &\quad + \int_0^t \varphi'(v(s)) g(s)\,dW(s),
  \end{align*}
  where, by the accretivity of $A$, \eqref{eq:n2} and \eqref{eq:tr-ineq},
  \begin{align*}
  \int_0^t \varphi'(v(s)) b(s)\,ds &= 
  \int_0^t \norm[\big]{u(s)}_{L_q}^{(2\alpha-1)q}
  \Bigl( \frac12 \tr_G \Phi''_q(u(s)) - \Phi'_q(u(s))Au(s) \Bigr)\,ds\\
  &\lesssim \int_0^T \norm[\big]{u(s)}_{L_q}^{2(\alpha q-1)}
  \norm[\big]{G(s)}^2_{\gamma(H,L_q)}\,ds\\
  &\leq \norm[\big]{u}_{L_\infty(L_q)}^{2(\alpha q-1)}
  \int_0^T \norm[\big]{G(s)}^2_{\gamma(H,L_q)}\,ds\\
  &\leq \varepsilon \norm[\big]{u}_{L_\infty(L_q)}^{2\alpha q}
  + N(\varepsilon) \biggl( 
    \int_0^T \norm[\big]{G(s)}^2_{\gamma(H,L_q)}\,ds \biggr)^{\alpha q},
  \end{align*}
  where we have applied Young's inequality\footnote{From now on, whenever
    we apply Young's inequality, we shall mostly state only the exponents
    used.} in the form
  \[
  xy \leq \varepsilon x^{\frac{\alpha q}{\alpha q-1}}
  + N(\varepsilon) y^{\alpha q}
  \qquad \forall x,y \geq 0, \; \varepsilon>0.
  \]
  Similarly,
  \[
  \int_0^t \varphi''(v(s)) \norm{g(s)}^2_{\gamma(H,\erre)}\,ds
  \lesssim \int_0^t \norm[\big]{u(s)}_{L_q}^{2(\alpha-1)q}
  \norm[\big]{g(s)}^2_{\gamma(H,\erre)}\,ds,
  \]
  where, by the ideal property of $\gamma$-Radonifying operators and
  \eqref{eq:nJ},
  \[
  \norm[\big]{g}_{\gamma(H,\erre)} =
  \norm[\big]{\Phi'_q(u) G}_{\gamma(H,\erre)}
  \leq \norm[\big]{\Phi'_q(u)}_{\cL(L_q)} \norm[\big]{G}_{\gamma(H,L_q)}
  \lesssim \norm[\big]{u}^{q-1}_{L_q} \norm[\big]{G}_{\gamma(H,L_q)},
  \]
  hence, proceeding exactly as before,
  \begin{align*}
  \int_0^t \varphi''(v(s)) \norm{g(s)}^2_{\gamma(H,\erre)}\,ds
  &\lesssim \int_0^t \norm[\big]{u(s)}_{L_q}^{2(\alpha q-1)}
  \norm[\big]{G(s)}^2_{\gamma(H,L_q)}\,ds\\
  &\leq \varepsilon \norm[\big]{u}_{L_\infty(L_q)}^{2\alpha q}
  + N(\varepsilon) \biggl( 
    \int_0^T \norm[\big]{G(s)}^2_{\gamma(H,L_q)}\,ds \biggr)^{\alpha q}.
  \end{align*}
  Finally, Davis' inequality yields
  \[
  \E\sup_{t\leq T} \abs[\bigg]{\int_0^t \varphi'(v(s)) g(s)\,dW(s)}
  \lesssim \E \biggl( \int_0^T
  \norm[\big]{\varphi'(v(s)) g(s)}^2_{\gamma(H,\erre)}\,ds \biggr)^{1/2},
  \]
  where
  \[
  \norm[\big]{\varphi'(v) g}_{\gamma(H,\erre)} \lesssim
  \norm[\big]{u}_{L_q}^{2\alpha q - 1} \norm[\big]{G}_{\gamma(H,L_q)},
  \]
  which implies, by Young's inequality with exponents
  $2\alpha q/(2\alpha q-1)$ and $2\alpha q$,
  \begin{align*}
  \biggl(
  \int_0^T \norm[\big]{\varphi'(v(s)) g(s)}^2_{\gamma(H,\erre)}\,ds
  \biggr)^{1/2} &\lesssim \norm[\big]{u}_{L_\infty(L_q)}^{2\alpha q - 1}
  \biggl(\int_0^T \norm[\big]{G(s)}^2_{\gamma(H,L_q)}\,ds\biggr)^{1/2}\\
  &\leq \varepsilon \norm[\big]{u}_{L_\infty(L_q)}^{2\alpha q}
  + N(\varepsilon) \biggl(
  \int_0^T \norm[\big]{G(s)}^2_{\gamma(H,L_q)}\,ds \biggr)^{\alpha q}.
  \end{align*}
  Taking $\varepsilon$ small enough, the claim is proved in the case
  $p \geq 2q$. The case $0<p<2q$ follows by Lenglart's domination
  inequality (see \cite{Lenglart}).
  \smallskip\par\noindent
  \textsc{Step 2.} Recall that $\ep{G}:=(I+\varepsilon A)^{-1}G \to G$
  in $L_2(0,T;\gamma(H,L_q))$ as $\varepsilon \to 0$ and
  $\ep{G} \in L_2(0,T;\gamma(H,\mathsf{D}(A)))$, and
  $\ep{u}:=S \diamond \ep{G} = \ep{(S \diamond G)}$ is the unique
  strong solution to
  \[
  d\ep{u}(t) + A\ep{u}(t)\,dt = \ep{G}(t)\,dW(t), \qquad \ep{u}(0)=0.
  \]
  It is elementary to show, by the previous step, that $(\ep{u})$ is a
  Cauchy sequence in $\H_p(L_q)$, and that its limit is a modification
  of $S \diamond G$. Since $\ep{u}$ has continuous paths and the convergence in
  $\H_p(L_q)$ implies almost sure uniform convergence of paths, we
  conclude that $u$ has a modification with continuous paths.
\end{proof}
\begin{rmk}
  \label{rmk:sconv}
  The previous Theorem is actually a special case of \cite{vNZ}, who
  considered the case of $X$-valued stochastic convolutions, with $X$
  a $2$-smooth Banach space. Our proof, although similar in spirit
  (the idea, as a matter of fact, goes back at least to
  \cite{Tubaro}), is interesting in the sense that it does \emph{not}
  use any infinite-dimensional calculus. To wit, It\^o's formula for the
  $q$-th power of the $L_q$-norm reduces to nothing else than the
  one-dimensional It\^o formula and Fubini's theorem (cf. the proof in
  \cite{Krylov:ItoLp}).
\end{rmk}


\section{Main results}
\label{sec:main} 
Let $D$ be an open bounded subset of $\erre^n$ with smooth boundary.
All Lebesgue spaces on $D$ will be denoted without explicit mention of
the domain, e.g. $L_q:=L_q(D)$. The mixed-norm spaces
$L_p(0,T;L_q(D))$ will simply be denoted by $L_p(L_q)$. Denoting the
Lebesgue measure on $[0,T]$ and on $D$ by $dt$ and $dx$, respectively,
we define the measure $m:=\P \otimes dt \otimes dx$ on
$\Omega \times [0,T] \times D$.

To look for $L_q$-valued (mild) solutions to \eqref{eq:0}, it is clear
that the linear operator $A$ should be taken as the generator of a
$C_0$-semigroup on $L_q$, and that the map $B:L_q \to \cL(H,L_q)$
should satisfy suitable Lipschitz continuity assumptions. For later
use, we introduce the following conditions, where $r>0$, $s \geq 2$:
\begin{itemize}
\item[($\mathrm{A}_s$)] $A$ is a linear $m$-accretive operator on $L_s$.
\item[($\mathrm{B}_{r,s}$)] The map
  $B:\Omega \times [0,T] \times L_s \to \gamma(H,L_s)$ is such that
  $B(\cdot,\cdot,x)$ is $H$-measurable and adapted for all
  $x \in L_s$, there is a constant $\norm{B}_{\lip}$ such that
  \[
  \norm[\big]{B(\omega,t,u) - B(\omega,t,v)}_{\gamma(H,L_s)} \leq
  \norm{B}_{\lip} \norm[\big]{u - v}_{L_s} \qquad \forall (\omega,t)
  \in \Omega \times [0,T],
  \]
  and $B(\cdot,\cdot,0) \in \L_r(L_2(0,T;\gamma(H,L_s)))$.
\end{itemize}
If $A$ satisfies ($\mathrm{A}_s$), the $C_0$-semigroup of contractions
generated by $-A$ on $L_s$ will be denoted by $S$. Should $A$ satisfy
($\mathrm{A}_s$) for different values of $s$, we shall not
notationally distinguish among different (but consistent)
realizations of $A$ and $S$ on different $L_s$ spaces.

We assume that the function $f:\erre \to \erre$ is increasing and that
there exists $d \geq 1$ such that $\abs{f(x)} \lesssim 1 + \abs{x}^d$ for
all $x \in \erre$. In particular, $f$ is \emph{not} assumed to be
continuous. We shall denote the maximal monotone graph associated to
$f$ (see {\S}\ref{ssec:m-accr}) by the same symbol.

\begin{rmk}
  Thanks to the linear term in $u$ on the right-hand side of
  \eqref{eq:0}, nothing changes assuming that $f$ (or $A$, or both) is
  quasi-monotone, i.e. that $f+\delta I$ is monotone for some
  $\delta>0$.
\end{rmk}

We shall establish well-posedness of \eqref{eq:0} in several classes
of processes. The most natural, and most restrictive, notion of
solution is the following.
\begin{defi}
  Let $u_0$ be an $L_q$-valued $\mathcal{F}_0$-measurable random
  variable. A measurable adapted $L_q$-valued processes $u$
  is a \emph{strict mild solution} to \eqref{eq:0} if
  $u \in L_\infty(L_q)$, there exists an adapted $L_q$-valued process
  $g \in L_1(L_q)$, with $g \in f(u)$ $m$-a.e., and, for all
  $t \in [0,T]$, $S(t-\cdot)B(\cdot,u)$ is stochastically integrable and
  \begin{equation}
  \label{eq:mild}
    u(t) + \int_0^t S(t-s)\bigl( g(s) - \eta u(s)\bigr)\,ds 
    = S(t)u_0 + \int_0^t S(t-s)B(s,u(s))\,dW(s).
  \end{equation}
\end{defi}

Our first main result provides sufficient conditions for the
well-posedness of \eqref{eq:0} in $\H_p(L_q)$. The proof is given in
Section \ref{sec:strict} below.
\begin{thm}     
  \label{thm:main0}
  Let $p>0$ and $q \geq 2$ be such that
  \[
  p^* := \frac{p}{q} (2d+q-2) > d.
  \]
  Assume that
  \begin{itemize}
  \item[\emph{(a)}] $u_0 \in \L_{p^*}(L_{qd})$;
  \item[\emph{(b)}] hypothesis $(\mathrm{A}_s)$ is
  satisfied for $s=q$ and $s=qd$;
  \item[\emph{(c)}] hypothesis $(\mathrm{B}_{r,s})$ is
  satisfied for $r=p$, $s=q$ and $r=p^*$, $s=qd$.
  \end{itemize}
  Then there exists a unique strict mild solution $u \in \H_p(L_q)$ to
  \eqref{eq:0}. Moreover, $u$ has continuous paths and the solution
  map $u_0 \mapsto u$ is Lipschitz continuous from $\L_p(L_q)$ to
  $\H_p(L_q)$.
\end{thm}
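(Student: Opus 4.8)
The plan is to construct the solution as a limit of solutions to regularized equations obtained by replacing $f$ with its Yosida approximation $f_\lambda$, which is Lipschitz continuous on $\erre$ (hence the corresponding evaluation operator is Lipschitz on $L_q$ and $L_{qd}$). For each $\lambda>0$, the regularized equation
\[
du_\lambda + Au_\lambda\,dt + f_\lambda(u_\lambda)\,dt = \eta u_\lambda\,dt + B(t,u_\lambda)\,dW
\]
has, by a standard fixed-point argument in $\H_{p,\alpha}(L_q)$ (using the maximal inequality of Theorem~\ref{thm:sconv} for the stochastic convolution, the Lipschitz bounds on $f_\lambda$ and $B$, and choosing $\alpha$ large), a unique mild solution $u_\lambda \in \H_p(L_q)$. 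The same fixed-point argument run in $\H_{p^*}(L_{qd})$, using assumptions (a) and (c) and $(\mathrm{A}_{qd})$, shows $u_\lambda \in \H_{p^*}(L_{qd})$, with a norm bound that is \emph{uniform in $\lambda$} provided the Lipschitz constant of $f_\lambda$ does not enter — so the key is to replace the crude Lipschitz estimate by an estimate exploiting monotonicity.

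The heart of the matter is the $\lambda$-uniform a priori estimate. Applying Proposition~\ref{prop:gf} to $u_\lambda$ (with $b = \eta u_\lambda - f_\lambda(u_\lambda)$ and $G = B(\cdot,u_\lambda)$) gives
\[
\norm{u_\lambda(t)}_{L_q}^q + q\int_0^t \norm{u_\lambda}_{L_q}^{q-2}\ip[\big]{f_\lambda(u_\lambda)}{J(u_\lambda)}\,ds
\le \norm{u_0}_{L_q}^q + (\text{drift from }\eta) + (\text{martingale}) + (\text{It\^o correction}),
\]
and since $f_\lambda(0)$ is bounded and $\ip{f_\lambda(u_\lambda)}{J(u_\lambda)}\ge \ip{f_\lambda(u_\lambda)-f_\lambda(0)}{J(u_\lambda)} + \ip{f_\lambda(0)}{J(u_\lambda)} \ge -\norm{f_\lambda(0)}\norm{u_\lambda}^{q-1}$ — in fact $f_\lambda(0)=0$ if $0\in f(0)$, and in general is bounded by $\abs{f(0+)}\vee\abs{f(0-)}$ — the nonlinear term is bounded below independently of $\lambda$. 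Controlling the It\^o correction $\tfrac12 q(q-1)\int \norm{B(s,u_\lambda)}_{\gamma(H,L_q)}^2\norm{u_\lambda}_{L_q}^{q-2}\,ds \lesssim \int(\norm{B(s,0)}^2 + \norm{u_\lambda}^2)\norm{u_\lambda}^{q-2}\,ds$ by the Lipschitz bound on $B$, taking $\sup_{t\le T}$, raising to the power $p^*/q$, taking expectations, using the Burkholder–Davis–Gundy inequality on the martingale term as in the proof of Theorem~\ref{thm:sconv}, Young's inequality to absorb the $\norm{u_\lambda}_{\H_{p^*}(L_q)}^{p^*}$ terms on the left, and Gronwall, yields
\[
\norm{u_\lambda}_{\H_{p^*}(L_q)} \lesssim 1 + \norm{u_0}_{\L_{p^*}(L_q)} + \norm{B(\cdot,\cdot,0)}_{\L_{p^*}(L_2(0,T;\gamma(H,L_q)))},
\]
uniformly in $\lambda$. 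The same computation with $q$ replaced by $qd$ — legitimate under (a), (b), (c) — gives a uniform bound on $\norm{u_\lambda}_{\H_{p^*}(L_{qd})}$. Crucially, testing against $J(u_\lambda)$ in $L_q$ (not $L_{qd}$) and using the polynomial growth $\abs{f_\lambda(x)}\le\abs{f(x)}\lesssim 1+\abs{x}^d$ gives
\[
\E\Bigl(\int_0^T \norm{f_\lambda(u_\lambda(s))}_{L_q}\,ds\Bigr)^{?} \lesssim 1 + \E\sup_{t\le T}\norm{u_\lambda(t)}_{L_{qd}}^{?},
\]
since $\norm{f_\lambda(u_\lambda)}_{L_q}^q = \int_D\abs{f_\lambda(u_\lambda)}^q \lesssim \abs{D} + \int_D\abs{u_\lambda}^{qd} = \abs{D} + \norm{u_\lambda}_{L_{qd}}^{qd}$; thus $(f_\lambda(u_\lambda))$ is bounded in $\L_1(L_1(L_q))$, hence uniformly integrable with respect to $m$ and (by de la Vallée–Poussin, since the bound is in an $L^{p^*/d}$-type space with $p^*/d>1$) weakly relatively compact in $L_1(\Omega\times[0,T]\times D)$.

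Now I pass to the limit. By weak-$L_1$ compactness of $(f_\lambda(u_\lambda))$, along a subsequence $f_\lambda(u_\lambda)\rightharpoonup g$ weakly in $L_1(\Omega\times[0,T]\times D)$; by the uniform $\H_{p^*}(L_q)$ bound and reflexivity (recall $p^*>d\ge1$, but one should work in $L_{p^*}(\Omega;L_q(0,T;L_q))$ or extract weak-* limits) one gets $u_\lambda\rightharpoonup u$ in a suitable sense, and from the identity $u_\lambda + \int_0^{\cdot}S(\cdot-s)(f_\lambda(u_\lambda)-\eta u_\lambda)\,ds = S(\cdot)u_0 + S\diamond B(\cdot,u_\lambda)$, passing to the limit (the stochastic convolution term converges because $u_\lambda\to u$ in $\H_p(L_q)$ — see below — and $B$ is Lipschitz, using Theorem~\ref{thm:sconv}), one sees $u$ satisfies \eqref{eq:mild} with this $g$. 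To show $g\in f(u)$ $m$-a.e.\ I use the $m$-accretivity / maximal monotonicity of the evaluation operator $\overline{f}$ together with its demiclosedness: it suffices to show $f_\lambda(u_\lambda)-f_\lambda(v)$ paired with $u_\lambda-v$ is nonnegative and pass to the limit using a Minty-type / lower-semicontinuity argument, combined with strong convergence of $u_\lambda$; concretely, writing $f_\lambda(u_\lambda) = \overline f((I+\lambda\overline f)^{-1}u_\lambda)$ (property (d) of {\S}\ref{ssec:m-accr}) and $(I+\lambda\overline f)^{-1}u_\lambda\to u$ once $u_\lambda\to u$ and $\lambda\to0$ (property (f) plus the uniform bound on $f_\lambda(u_\lambda)$), demiclosedness of $\overline f$ on $L_q$ gives $(u,g)\in\overline f$.

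For uniqueness and the Lipschitz dependence, let $u^1,u^2$ be two strict mild solutions with data $u_0^1,u_0^2$, set $w=u^1-u^2$. These are mild solutions of a linear equation with forcing $-(g^1-g^2)+\eta w$ and diffusion $B(\cdot,u^1)-B(\cdot,u^2)$; applying Proposition~\ref{prop:gf} (with $q=2$, so $\Phi_2'=2J$ and the nonlinear term becomes $2\ip{g^1-g^2}{J(w)}\ge0$ by monotonicity of $f$) and then, as in the proof of Theorem~\ref{thm:sconv}, It\^o's formula for $x\mapsto x^{p/2}$ if $p\ge2$ (or Lenglart for $0<p<2$), the BDG inequality, the Lipschitz bound $\norm{B(\cdot,u^1)-B(\cdot,u^2)}_{\gamma(H,L_2)}\le\norm{B}_{\lip}\norm{w}_{L_2}$, and Gronwall, yields $\norm{w}_{\H_p(L_2)} \lesssim \norm{u_0^1-u_0^2}_{\L_p(L_2)}$. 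Combined with the uniform-in-initial-datum bound $\norm{u^i}_{\H_p(L_q)}\lesssim 1+\norm{u_0^i}_{\L_{p^*}(L_q)}$ and interpolation (if needed to upgrade from $L_2$ to $L_q$, noting both solutions lie in $\H_{p^*}(L_{qd})$), this gives uniqueness in $\H_p(L_q)$ and Lipschitz continuity of $u_0\mapsto u$ from $\L_p(L_q)$ to $\H_p(L_q)$; the same contraction estimate shows $(u_\lambda)$ itself is Cauchy in $\H_p(L_q)$ (compare $u_\lambda$ and $u_\mu$: the difference of the nonlinear terms $\ip{f_\lambda(u_\lambda)-f_\mu(u_\mu)}{J(u_\lambda-u_\mu)}$ is handled by the standard resolvent identity $\ip{f_\lambda(a)-f_\mu(b)}{J((I+\lambda\overline f)^{-1}a\,\text{-shift})}\ge -(\lambda+\mu)\cdot(\cdots)$, giving an error $O(\lambda+\mu)$ times the uniform bounds), which legitimizes the strong convergence $u_\lambda\to u$ in $\H_p(L_q)$ used above and shows $u$ has continuous paths as an $\H_p(L_q)$-limit of the continuous processes $u_\lambda$ (each continuous by Theorem~\ref{thm:sconv} applied to the stochastic convolution plus continuity of the deterministic terms). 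The main obstacle is the identification $g\in f(u)$: the weak $L_1$ convergence of $f_\lambda(u_\lambda)$ is too weak to pass to the limit naively, so the argument genuinely needs the interplay of the uniform higher integrability in $L_{qd}$ (to get weak $L_1$ compactness via de la Vallée–Poussin), the strong $\H_p(L_q)$ convergence of $u_\lambda$ (to identify the limit in the equation and to invoke demiclosedness), and the maximal monotonicity of $\overline f$; arranging these consistently — in particular proving the Cauchy property of $(u_\lambda)$ with the $O(\lambda+\mu)$ error — is the technical core.
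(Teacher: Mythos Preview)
Your strategy is the same as the paper's: regularize by Yosida, obtain $\lambda$-uniform a priori bounds in $\H_{p^*}(L_{qd})$ via Proposition~\ref{prop:gf} and monotonicity, prove $(u_\lambda)$ is Cauchy in $\H_p(L_q)$ using the resolvent identity (your $O(\lambda+\mu)$ error), pass to the limit using weak compactness of $f_\lambda(u_\lambda)$ and demiclosedness of $\overline f$, and deduce uniqueness/Lipschitz from a contraction estimate. Two points are worth flagging.

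First, your weak-compactness step is slightly roundabout. The paper notes that the bound $\norm{f_\lambda(u_\lambda)}_{\L_{p^*/d}(L_s(L_q))} \lesssim 1 + \norm{u_\lambda}_{\H_{p^*}(L_{qd})}^d$ places $(f_\lambda(u_\lambda))$ in a bounded set of the \emph{reflexive} Banach space $E=\L_{p^*/d}(L_s(L_q))$ (reflexive precisely because $p^*/d>1$), giving weak sequential compactness directly in $E$, not merely in $L_1(m)$. This is cleaner than de la Vall\'ee--Poussin and makes the convolution $\phi\mapsto\int_0^\cdot S(\cdot-s)\phi(s)\,ds$ weakly continuous on $E$, so one can pass to the limit in the mild formulation without leaving $L_q$-valued objects.

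Second, and more substantively, your uniqueness/Lipschitz argument has a gap. You apply Proposition~\ref{prop:gf} ``with $q=2$'' and then appeal to interpolation to upgrade to $L_q$. The $L_2$ estimate does give uniqueness (equality in $\H_p(L_2)$ forces equality $m$-a.e.), but it does \emph{not} give the Lipschitz bound $\norm{u_1-u_2}_{\H_p(L_q)}\lesssim\norm{u_{01}-u_{02}}_{\L_p(L_q)}$: the embedding goes the wrong way, and interpolation against a fixed $\H_{p^*}(L_{qd})$ bound cannot produce a Lipschitz constant. The fix is simply to run the same argument in $L_q$ from the start: the key inequality $\Phi_q'(u_1-u_2)(g_1-g_2)=q\int_D\abs{u_1-u_2}^{q-2}(u_1-u_2)(g_1-g_2)\,dx\ge 0$ holds for all $q\ge 2$ by pointwise monotonicity of $f$, so there is no reason to specialize to $q=2$. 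This is exactly what the paper does (Lemma~\ref{lm:smile}), and the same remark applies to the Cauchy estimate for $u_\lambda-u_\mu$ (Lemma~\ref{lm:Cauchy}), where the error term $(\lambda+\mu)(\abs{f_\lambda(u_\lambda)}^2+\abs{f_\mu(u_\mu)}^2)$ gets multiplied by $\abs{u_\lambda-u_\mu}^{q-2}$ and is controlled by the $L_{2d+q-2}\supset L_{qd}$ bound.
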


Relaxing the definition of solution, well-posedness for \eqref{eq:0}
can be proved for \emph{any} $p>0$ and $q \geq 2$.
The following notion of solution derives from the definition of
\emph{solution faible} by Benilan and Br\'ezis \cite{BenBre}. We first
deal with with equations with additive noise, i.e. of the type
\begin{equation}
\label{eq:add}
du(t) + Au(t)\,dt + f(u(t))\,dt = \eta u(t)\,dt + B(t)\,dW(t),
\qquad u(0)=u_0,
\end{equation}
where $B \in \L_p(L_2(0,T;\gamma(H,L_q)))$.
\begin{defi}
  Let $p>0$ and $q \geq 2$. A process $u \in \mathbb{H}_p(L_q)$ is a
  \emph{generalized solution} to \eqref{eq:add} if there exist sequences
  $(u_{0n})_n \subset \L_p(L_q)$,
  $(B_n)_n \subset \L_p(L_2(0,T;\gamma(H,L_q)))$, and
  $(u_n)_n \subset \mathbb{H}_p(L_q)$ such that $u_{0n} \to u_0$ in
  $\L_p(L_q)$, $B_n \to B$ in $\L_p(L_2(0,T;\gamma(H,L_q)))$, and
  $u_n \to u$ in $\mathbb{H}_p(L_q)$ as $n \to \infty$, where $u_n$ is the
  (unique) strict mild solution to
  \[
  du_n(t) + Au_n(t)\,dt + f(u_n(t))\,dt = \eta u_n(t)\,dt + B_n(t)\,dW(t),
  \qquad u_n(0)=u_{0n}.
  \]
\end{defi}

\begin{thm}
  \label{thm:ga}
  Let $p>0$ and $q \geq 2$. Assume that
  \begin{itemize}
  \item[\emph{(a)}] $u_0 \in \L_p(L_q)$;
  \item[\emph{(b)}] hypothesis $(\mathrm{A}_s)$ is satisfied for $s=q$
    and $s=qd$;
  \item[\emph{(c)}]
    $B \in \L_p(L_2(0,T;\gamma(H,L_q)))$.
  \end{itemize}
  Then \eqref{eq:add} admits a unique generalized solution
  $u \in \H_p(L_q)$. Moreover, $u$ has continuous paths and the
  solution map $u_0 \mapsto u$ is Lipschitz continuous from
  $\L_p(L_q)$ to $\H_p(L_q)$.
\end{thm}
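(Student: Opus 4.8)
The plan is to reduce the theorem to a single Lipschitz-type a priori estimate for the difference of two strict mild solutions of~\eqref{eq:add}, and then to construct the generalized solution as a limit of strict mild solutions with truncated data, to which Theorem~\ref{thm:main0} applies.

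\emph{Stability estimate.} The key claim is that, for every $p>0$ and $q\ge 2$, any two strict mild solutions $u$, $v$ of~\eqref{eq:add} with data $(u_0,B)$, $(v_0,\tilde B)$ satisfy
\[
\norm{u-v}_{\H_p(L_q)} \lesssim_{p,q,\eta,T} \norm{u_0-v_0}_{\L_p(L_q)}
  + \norm{B-\tilde B}_{\L_p(L_2(0,T;\gamma(H,L_q)))}.
\]
To prove it, I note that $w:=u-v$ is the mild solution of $dw+Aw\,dt+(g_u-g_v)\,dt=\eta w\,dt+(B-\tilde B)\,dW$ with $w(0)=u_0-v_0$, where $g_u\in f(u)$, $g_v\in f(v)$ lie in $L_1(L_q)$; since $w\in L_\infty(L_q)$, Proposition~\ref{prop:gf} applies with $b:=\eta w-(g_u-g_v)\in L_1(L_q)$ and $G:=B-\tilde B$. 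In the resulting inequality, $\int_0^t\Phi_q'(w)(g_u-g_v)\,ds=q\int_0^t\ip[\big]{\abs{w}^{q-2}(g_u-g_v)}{w}\,ds\ge 0$ by monotonicity of $f$, so it is dropped; $\int_0^t\Phi_q'(w)(\eta w)\,ds=q\eta\int_0^t\norm{w}_{L_q}^q\,ds$; and the trace term is bounded, via~\eqref{eq:n2}, by $\tfrac{q(q-1)}{2}(w^*_\tau)^{q-2}I$ for $t\le\tau$, with $w^*_\tau:=\sup_{t\le\tau}\norm{w(t)}_{L_q}$ and $I:=\int_0^T\norm{B-\tilde B}_{\gamma(H,L_q)}^2\,ds$. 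Taking $\sup_{t\le\tau}$, using Young's inequality (exponents $\tfrac{q}{q-2},\tfrac{q}{2}$) to absorb a fraction of $(w^*_\tau)^q$ coming from the trace term, then raising the inequality to the power $p/q$ (using subadditivity of $x\mapsto x^{p/q}$ when $p<q$), estimating $N(t):=\int_0^t\Phi_q'(w)(B-\tilde B)\,dW$ by the Burkholder--Davis--Gundy inequality together with~\eqref{eq:nJ} and the ideal property of $\gamma(H,L_q)$ (whence $[N]_\tau\lesssim(w^*_\tau)^{2(q-1)}I$), and again absorbing the resulting $(w^*_\tau)^p$ by Young's inequality, I obtain
\[
\E(w^*_\tau)^p \lesssim \E\norm{w(0)}_{L_q}^p + \int_0^\tau\E(w^*_s)^p\,ds + \E\, I^{p/2};
\]
this is first carried out after localizing by $\tau_k:=\inf\{t:\norm{w(t)}_{L_q}>k\}$, a stopping time because each summand of~\eqref{eq:mild} is path-continuous (for the stochastic convolution by Theorem~\ref{thm:sconv}), which keeps all quantities finite. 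Gronwall's lemma and $k\to\infty$ then give the claim. I expect this to be the main obstacle: the delicate points are the absorptions of the $(w^*_\tau)^q$- and $(w^*_\tau)^p$-terms and, above all, working with the $p/q$-th power directly rather than first deriving an $L_q^q$-level estimate — the latter would require $\E\norm{w(0)}_{L_q}^q<\infty$, whereas the former makes the estimate available for \emph{any} two strict mild solutions, which is exactly what is needed for uniqueness.

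\emph{Existence and continuous paths.} Given $u_0\in\L_p(L_q)$, $B\in\L_p(L_2(0,T;\gamma(H,L_q)))$, I set $u_{0n}:=(-n)\vee(u_0\wedge n)$ and $B_n:=M_{\psi_n}B$, where $M_{\psi_n}$ is multiplication on $L_q$ by $\psi_n:=\min(1,n/f_B)$ and $f_B:=(\sum_m\abs{Bh_m}^2)^{1/2}\in L_q$. Since $D$ is bounded, $u_{0n}$ and $B_n$ are bounded as maps into $L_{qd}$, resp.\ $\gamma(H,L_{qd})$, hence lie in $\L_r(L_{qd})$, resp.\ $\L_r(L_2(0,T;\gamma(H,L_{qd})))$, for every $r$, while $u_{0n}\to u_0$ in $\L_p(L_q)$ and $B_n\to B$ in $\L_p(L_2(0,T;\gamma(H,L_q)))$ by dominated convergence. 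With $\tilde p:=p\vee(qd)$, so that $\tilde p^*:=\tfrac{\tilde p}{q}(2d+q-2)\ge 2d>d$, Theorem~\ref{thm:main0} applies to the equation with data $(u_{0n},B_n)$ — its hypotheses hold with this $\tilde p$, the Lipschitz condition in $(\mathrm{B}_{r,s})$ being vacuous for additive noise — and produces a unique strict mild solution $u_n\in\H_{\tilde p}(L_q)\subset\H_p(L_q)$ with continuous paths. By the stability estimate $(u_n)$ is Cauchy in $\H_p(L_q)$; its limit $u\in\H_p(L_q)$ is, by definition, a generalized solution of~\eqref{eq:add}, and since convergence in $\H_p(L_q)$ forces a.s.\ uniform convergence of the paths along a subsequence, $u$ has continuous paths.

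\emph{Uniqueness and continuous dependence.} If $u$, $u'$ are generalized solutions with the same data $(u_0,B)$, witnessed by sequences $(u_{0n},B_n,u_n)$ and $(u'_{0n},B'_n,u'_n)$, then applying the stability estimate to the strict mild solutions $u_n$, $u'_n$ gives
\[
\norm{u_n-u'_n}_{\H_p(L_q)} \lesssim \norm{u_{0n}-u'_{0n}}_{\L_p(L_q)}
  + \norm{B_n-B'_n}_{\L_p(L_2(0,T;\gamma(H,L_q)))} \longrightarrow 0,
\]
since $(u_{0n})$ and $(u'_{0n})$ both converge to $u_0$ and $(B_n)$, $(B'_n)$ both to $B$; letting $n\to\infty$ and using the (quasi-)triangle inequality in $\H_p(L_q)$ yields $u=u'$. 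The same argument applied to the generalized solutions with data $(u_0,B)$ and $(v_0,B)$ gives $\norm{u-v}_{\H_p(L_q)}\lesssim\norm{u_0-v_0}_{\L_p(L_q)}$, i.e.\ the asserted Lipschitz continuity of the solution map from $\L_p(L_q)$ to $\H_p(L_q)$.
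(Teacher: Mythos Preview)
Your proof is correct and follows essentially the same approach as the paper. The paper isolates the stability estimate as a separate lemma (Lemma~\ref{lm:ga}) and proves it using the exponential-weight trick $v_i:=e^{-\alpha t}u_i$ with $\alpha$ chosen large to absorb the drift and trace terms directly, whereas you use stopping-time localization and Gronwall; for the approximating data the paper simply invokes density of $\L_{p_1^*}(L_{qd})$ in $\L_p(L_q)$ (and the analogous density for $B$), while you give explicit truncations. These are standard interchangeable variants of the same argument and lead to the same conclusion.
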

In order to define generalized solutions to \eqref{eq:0} we need some
preparations. In particular, we (formally) introduce the map $\Gamma$
on $\H_p(L_q)$ defined by $\Gamma: v \mapsto w$, where $w$ is the
unique generalized solution to
\[
dw(t) + Aw(t)\,dt + f(w(t))\,dt = \eta w(t)\,dt + B(v(t))\,dW(t),
\qquad u(0)=u_0,
\]
if it exists. Otherwise, if no generalized solution exists, then we
set $\Gamma(v)=\varnothing$.
\begin{defi}
  A process $u \in \H_p(L_q)$ is a generalized solution to
  equation \eqref{eq:0} if it is a fixed point of $\Gamma$ in
  $\H_p(L_q)$.
\end{defi}
\begin{thm}     
  \label{thm:gm}
  Let $p>0$ and $q \geq 2$. Assume that
  \begin{itemize}
  \item[\emph{(a)}] $u_0 \in \L_p(L_q)$;
  \item[\emph{(b)}] hypothesis $(\mathrm{A}_s)$ is satisfied for $s=q$
    and $s=qd$;
  \item[\emph{(c)}] hypothesis $(\mathrm{B}_{p,q})$ is satisfied.
  \end{itemize}
  Then \eqref{eq:0} admits a unique generalized solution
  $u \in \H_p(L_q)$. Moreover, $u$ has continuous paths and the
  solution map $u_0 \mapsto u$ is Lipschitz continuous from
  $\L_p(L_q)$ to $\H_p(L_q)$.
\end{thm}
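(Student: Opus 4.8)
The plan is to prove Theorem~\ref{thm:gm} by a fixed-point argument for the map $\Gamma$ introduced just above its statement, using the well-posedness of generalized solutions to equations with \emph{additive} noise (Theorem~\ref{thm:ga}) as a black box. Concretely, I would fix $v \in \H_p(L_q)$ and observe that $(\omega,t) \mapsto B(\omega,t,v(t))$ is an admissible additive noise coefficient: it is $H$-measurable, adapted, and by the Lipschitz hypothesis $(\mathrm{B}_{p,q})$ together with $B(\cdot,\cdot,0) \in \L_p(L_2(0,T;\gamma(H,L_q)))$ one gets
\[
\norm[\big]{B(\cdot,\cdot,v)}_{\L_p(L_2(0,T;\gamma(H,L_q)))} \lesssim
\norm[\big]{B(\cdot,\cdot,0)}_{\L_p(L_2(0,T;\gamma(H,L_q)))}
+ \norm{B}_{\lip}\,\norm{v}_{\H_p(L_q)} < \infty.
\]
Hence Theorem~\ref{thm:ga} applies (its hypotheses (a), (b) coincide with ours, and (c) is the display above), so $\Gamma(v)$ is a well-defined element of $\H_p(L_q)$; in particular $\Gamma$ maps $\H_p(L_q)$ into itself, has continuous-path values, and $u_0 \mapsto \Gamma(v)$ is Lipschitz. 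It then remains to show $\Gamma$ has a unique fixed point.

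To produce the fixed point I would establish that $\Gamma$ is a contraction on $\H_{p,\alpha}(L_q)$ for $\alpha$ large enough — recall from Section~\ref{sec:prelim} that the exponentially weighted norm $\norm{\cdot}_{\H_{p,\alpha}(L_q)}$ is equivalent to $\norm{\cdot}_{\H_p(L_q)}$, so a fixed point in one is a fixed point in the other. Given $v_1, v_2 \in \H_p(L_q)$, set $w_i = \Gamma(v_i)$ and $w = w_1 - w_2$; formally $w$ solves a linear equation $dw + Aw\,dt + \big(f(w_1)-f(w_2)\big)\,dt = \eta w\,dt + \big(B(v_1)-B(v_2)\big)\,dW$ with $w(0)=0$. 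The key point is that the monotonicity of $f$ makes the difference of the drift nonlinearities dissipative: one expects an estimate of the form
\[
\norm{w_1(t) - w_2(t)}_{L_q}^q \leq q\,\eta \int_0^t \norm{w(s)}_{L_q}^q\,ds
+ (\text{stochastic and trace terms involving } B(v_1)-B(v_2)),
\]
obtained by applying Proposition~\ref{prop:gf} (or It\^o's formula, Theorem~\ref{thm:Ito}, for $\Phi_q$) to $w$, using that $\Phi_q'(w)\big(f(w_1)-f(w_2)\big) = q\norm{w}_{L_q}^{q-2}\ip{f(w_1)-f(w_2)}{\abs{w}^{q-2}w}\geq 0$ by monotonicity of $f$, so that term drops out. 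Taking supremum in $t$, expectation, and using the Burkholder/Davis bound \eqref{eq:pippo} for the stochastic term together with \eqref{eq:n2}, \eqref{eq:tr-ineq} for the trace term, the Lipschitz bound on $B$, and Young's inequality to absorb the top-order power of $\norm{w}_{\H_p(L_q)}$, one arrives at
\[
\norm{w_1 - w_2}_{\H_{p,\alpha}(L_q)}^{?} \lesssim_{\alpha}
\frac{1}{\alpha^{c}}\,\norm{v_1 - v_2}_{\H_{p,\alpha}(L_q)}^{?}
+ (\text{lower-order}),
\]
the crucial gain being that the Lipschitz-in-$v$ part of the right-hand side comes with a factor tending to $0$ as $\alpha \to \infty$ (because $B(v_1(s))-B(v_2(s))$ is controlled by $e^{\alpha s}$ times the weighted norm, and the $ds$-integral against the decaying weight produces an $\alpha^{-1}$-type factor). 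Choosing $\alpha$ large makes the coefficient $<1$, so $\Gamma$ is a strict contraction on the Banach space $\H_{p,\alpha}(L_q)$, and Banach's fixed-point theorem gives existence and uniqueness of $u = \Gamma(u)$, which is the generalized solution.

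Finally, the continuous-dependence statement: $u_0 \mapsto u$ Lipschitz from $\L_p(L_q)$ to $\H_p(L_q)$ follows by a standard perturbation of the contraction argument — if $u, \tilde u$ are the fixed points corresponding to $u_0, \tilde u_0$, then $u - \tilde u = \Gamma_{u_0}(u) - \Gamma_{\tilde u_0}(\tilde u)$, and splitting this as $\big(\Gamma_{u_0}(u) - \Gamma_{u_0}(\tilde u)\big) + \big(\Gamma_{u_0}(\tilde u) - \Gamma_{\tilde u_0}(\tilde u)\big)$, the first term is contractively bounded by $\norm{u - \tilde u}$ and the second, by the Lipschitz dependence on the initial datum already furnished by Theorem~\ref{thm:ga}, is bounded by a constant times $\norm{u_0 - \tilde u_0}_{\L_p(L_q)}$; rearranging yields the claim. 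Continuity of paths is inherited directly from Theorem~\ref{thm:ga}, since $u = \Gamma(u)$ is by definition a generalized solution of an additive-noise equation. I expect the main obstacle to be purely technical rather than conceptual: carefully justifying the It\^o/subdifferential computation for the difference $w$ when $f$ is merely a (possibly discontinuous) maximal monotone graph — so that $f(w_i)$ should be read as a measurable selection $g_i \in f(w_i)$ — and checking that the monotonicity inequality $\ip{g_1 - g_2}{\abs{w}^{q-2}w} \geq 0$ survives the necessary Yosida-regularization-and-limit passage exactly as in the proof of Proposition~\ref{prop:gf}; and, relatedly, making the $\alpha$-dependence of all constants explicit enough to see the contraction factor genuinely vanish as $\alpha \to \infty$.
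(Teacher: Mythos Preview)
Your overall architecture---use Theorem~\ref{thm:ga} to see that $\Gamma$ is well-defined on $\H_p(L_q)$, then run a contraction argument and read off Lipschitz dependence on $u_0$---is exactly what the paper does. Two points are worth flagging.

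First, a genuine slip: your contraction mechanism via large $\alpha$ does not work as stated. You claim that $\norm{B(v_1(s))-B(v_2(s))}$ is controlled by $e^{\alpha s}$ times the weighted sup, so that the $e^{-2\alpha s}\,ds$ integral produces an $\alpha^{-1}$-type factor. But the exponentials cancel exactly: $e^{-2\alpha s}\norm{v_1(s)-v_2(s)}^2 \leq e^{-2\alpha s}\cdot e^{2\alpha s}\bigl(\sup_t e^{-\alpha t}\norm{v_1(t)-v_2(t)}\bigr)^2$, and integrating in $s$ yields a factor $T$, not $1/\alpha$. This is precisely what the paper gets (see the display bounding $\norm{B(w_1)-B(w_2)}^p_{\L_p(L_{2,\alpha})}$ by $\norm{B}_{\lip}^p T^{p/2}\norm{w_1-w_2}^p_{\H_{p,\alpha}}$), and the paper accordingly obtains a contraction for small $T$ and then patches. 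Your proof goes through with this correction; only the source of the small constant changes.

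Second, rather than rederiving the It\^o estimate for $w_1-w_2$ from Proposition~\ref{prop:gf}, the paper simply invokes inequality~\eqref{eq:ga} of Lemma~\ref{lm:ga}, which packages exactly the computation you sketch and, crucially, already records that the estimate extends from strict mild to generalized solutions by stability under limits. This sidesteps the issue you identify at the end: for generalized solutions $w_i$ one has no selection $g_i\in f(w_i)$ with $g_i\in L_1(L_q)$, so Proposition~\ref{prop:gf} cannot be applied directly to the difference. Your regularize-and-pass-to-the-limit plan would reconstruct Lemma~\ref{lm:ga}; citing it is cleaner.
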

The proofs of Theorems \ref{thm:ga} and \ref{thm:gm} are given in
Section \ref{sec:gen} below. Note that, if $u \in \H_p(L_q)$ is a
generalized solution to \eqref{eq:0}, we cannot claim that $f(u)$
admits a selection $g$ such that $\int_0^t S(t-s)g(s)\,ds$ in
\eqref{eq:mild} is well defined, essentially because we do not have
enough integrability for $g$.

Under additional assumptions we obtain existence of a (unique)
solution $u$ for which $f(u)$ admits a selection $g$ satisfying the
``minimal'' integrability condition $g \in \L_1(L_1(L_1))$.
\begin{defi}
  \label{def:mild}
  Let $u_0$ be an $L_q$-valued $\cF_0$-measurable random variable. A
  measurable adapted $L_q$-valued process
  $u \in L_\infty(L_q)$ is a \emph{mild solution} to equation
  \eqref{eq:0} if there exists $g \in \L_1(L_1(L_1))$, with
  $g \in f(u)$ $m$-a.e., and, for all $t \in [0,T]$,
  $S(t-\cdot)B(\cdot,u)$ is stochastically integrable and
  \eqref{eq:mild} is satisfied for all $t \in [0,T]$.
\end{defi}
The corresponding well-posedness result holds in a subset of
$\H_p(L_q)$ defined in terms of the potential of $f$, for which we
assume that $0 \in f(0)$. We need some definitions first: for
$q \geq 2$, let $\phi_q$ be the homeomorphism of $\erre$ defined by
$\phi_q: x \mapsto x \abs{x}^{q-2}$, and $F:\erre \to \erre$ be the
potential of $f$, i.e. a convex function such that $\partial F = f$,
which we ``normalize'' so that $F(0)=0$ (in particular $F \geq
0$).
Similarly, setting $\tilde{f} := f \circ \phi_q^{-1}$, $\tilde{F}$
stands for the potential of $\tilde{f}$, subject to the same
normalization, and $\tilde{F}^*$ for its Legendre-Fenchel
conjugate. Finally, we set $\hat{F}:=\tilde{F} \circ \phi_q$. A simple
computation shows that the convex function $\hat{F}$ is the potential
of the maximal monotone graph $x \mapsto f(x) \phi_q'(x)$.
\begin{thm}
\label{thm:mild}
  Let $p \geq q \geq 2$. Assume that
  \begin{itemize}
  \item[\emph{(a)}] $u_0 \in \L_p(L_q)$;
  \item[\emph{(b)}] hypothesis $(\mathrm{A}_s)$ is satisfied for
    $s \in \{1, q, qd\}$;
  \item[\emph{(b')}] the resolvent $R_\lambda:=(I+\lambda A)^{-1}$,
    $\lambda>0$, is positivity preserving and such that
    $R_\lambda^{\sigma}(L_1) \subset L_q$ for some $\sigma\in\enne$;
  \item[\emph{(c)}] hypothesis $(\mathrm{B}_{p,q})$ is satisfied;
  \item[\emph{(d)}] $0 \in f(0)$ and $F$ is even.
  \end{itemize}
  Then \eqref{eq:0} admits a unique mild solution $u \in \H_p(L_q)$
  such that $\hat{F}(u)$, $\tilde{F}^*(g) \in
  \L_1(L_1(L_1))$.
  Moreover, $u$ has continuous paths and the solution map
  $u_0 \mapsto u$ is Lipschitz continuous from $\L_p(L_q)$ to
  $\H_p(L_q)$.
\end{thm}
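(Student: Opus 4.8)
The plan is to regularise the drift $f$, solve the resulting equations, and pass to the limit; the heart of the matter is an $L_1$-estimate on $f(u)$ obtained by combining It\^o's formula for $\Phi_q:=\norm{\cdot}_{L_q}^q$ with Fenchel duality for the convex functions $\tilde F$, $\tilde F^*$ and weak compactness in $L_1$. For $\lambda>0$ let $f_\lambda$ be the Yosida approximation of the maximal monotone extension of $f$: since $0\in f(0)$, $f_\lambda$ is globally Lipschitz on $\erre$ with $f_\lambda(0)=0$, so its evaluation operator is Lipschitz on every $L_s$, and a contraction argument on $\H_{p,\alpha}(L_q)$ yields, for each $\lambda$, a unique strict mild solution $u_\lambda\in\H_p(L_q)$ to the equation obtained from \eqref{eq:0} by replacing $f$ with $f_\lambda$, with $f_\lambda(u_\lambda)\in L_1(L_q)$ a.s.; after a preliminary approximation of $(u_0,B)$ by data smooth enough to fall also under $(\mathrm{A}_{qd})$ --- a step legitimised a posteriori by the Lipschitz dependence proved below --- one may further assume a bound for $u_\lambda$ in $\H_r(L_{qd})$ (for a suitable $r$) uniform in $\lambda$. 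Applying Proposition \ref{prop:gf} with drift $f_\lambda(u_\lambda)-\eta u_\lambda$, iterating with $\varphi(x)=x^{2\alpha}$ as in Step~1 of the proof of Theorem \ref{thm:sconv}, and using $\Phi_q'(u_\lambda)Au_\lambda\ge0$ and $\Phi_q'(u_\lambda)f_\lambda(u_\lambda)=q\int_D\abs{u_\lambda}^{q-2}u_\lambda f_\lambda(u_\lambda)\,dx\ge0$, one obtains a bound for $\norm{u_\lambda}_{\H_p(L_q)}$ uniform in $\lambda$.

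Keeping the monotone term in the same It\^o inequality yields, after taking expectations,
\[
q\,\E\int_0^T\!\!\int_D\phi_q(u_\lambda)f_\lambda(u_\lambda)\,dx\,ds
\le\E\norm{u_0}_{L_q}^q-\E\norm{u_\lambda(T)}_{L_q}^q
+q\eta\,\E\int_0^T\norm{u_\lambda}_{L_q}^q\,ds
+\frac{q(q-1)}2\,\E\int_0^T\norm{B(u_\lambda)}_{\gamma(H,L_q)}^2\norm{u_\lambda}_{L_q}^{q-2}\,ds,
\]
whose right-hand side is bounded uniformly in $\lambda$. Writing $g_\lambda:=f_\lambda(u_\lambda)$ and $\bar w_\lambda:=\phi_q\big((I+\lambda f)^{-1}u_\lambda\big)$, one has $g_\lambda\in\partial\tilde F(\bar w_\lambda)$ pointwise, hence $\tilde F(\bar w_\lambda)+\tilde F^*(g_\lambda)=g_\lambda\bar w_\lambda$; since $(I+\lambda f)^{-1}$ is $1$-Lipschitz and fixes $0$, $\abs{(I+\lambda f)^{-1}u_\lambda}\le\abs{u_\lambda}$ with the same sign, so $g_\lambda\bar w_\lambda\le g_\lambda\phi_q(u_\lambda)=\phi_q(u_\lambda)f_\lambda(u_\lambda)$, whence $\tilde F^*(g_\lambda)$ and $\hat F\big((I+\lambda f)^{-1}u_\lambda\big)=\tilde F(\bar w_\lambda)$ are bounded in $\L_1(L_1(L_1))$ uniformly in $\lambda$. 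As $f$ has full domain, $\tilde F$ is finite-valued, so $\tilde F^*$ is superlinear at infinity, and the de la Vall\'ee--Poussin criterion shows $\{g_\lambda\}$ to be uniformly integrable on $(\Omega\times[0,T]\times D,m)$, hence relatively weakly compact there. Along a subsequence $g_\lambda\rightharpoonup g$ weakly in $\L_1(L_1(L_1))$, and $\lambda g_\lambda\to0$ there as well.

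It remains to prove strong convergence of $u_\lambda$ and to identify the limit; the former is the technical core. Applying It\^o's formula to $\Phi_q(u_\lambda-u_\mu)$, the terms involving $A$ and the monotone part of $f$ carry a favourable sign, and, using $u_\lambda-(I+\lambda f)^{-1}u_\lambda=\lambda g_\lambda$ and $\abs{f(x)}\lesssim1+\abs{x}^d$, the remaining contribution is dominated by $C(\lambda+\mu)\int_0^T\!\!\int_D\abs{u_\lambda-u_\mu}^{q-2}\big(1+\abs{u_\lambda}^{2d}+\abs{u_\mu}^{2d}\big)\,dx\,ds$; a stopping-time argument based on the level sets of $\norm{u_\lambda}_{L_{qd}}$, using the uniform $L_{qd}$-bounds, shows this tends to $0$, so $(u_\lambda)$ converges in $\H_p(L_q)$, in particular in measure on $m$, to some $u\in\H_p(L_q)$, and $(I+\lambda f)^{-1}u_\lambda\to u$ a.e. Passing to a further subsequence along which $u_\lambda\to u$ a.e. and combining with $g_\lambda\rightharpoonup g$, a Mazur-lemma argument together with the closedness and local boundedness of the graph of $f$ in $\erre^2$ gives $g\in f(u)$ $m$-a.e.; weak lower semicontinuity of $v\mapsto\int\tilde F^*(v)\,dm$ and Fatou's lemma applied to $\int\hat F(\cdot)\,dm$ then give $\hat F(u),\tilde F^*(g)\in\L_1(L_1(L_1))$. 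Finally one passes to the limit in the mild formulation of the regularised equations: all terms converge strongly, and, since $S$ is a contraction semigroup on $L_1$ by $(\mathrm{A}_1)$, $\int_0^tS(t-s)g_\lambda(s)\,ds\to\int_0^tS(t-s)g(s)\,ds$, which is well defined precisely because $g\in\L_1(L_1(L_1))$; hence \eqref{eq:mild} holds, $u$ is a mild solution, and it has continuous paths by Theorem \ref{thm:sconv} and \eqref{eq:mild}.

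For uniqueness and Lipschitz dependence one compares two mild solutions: the difference of their selections pairs monotonically with the difference of the solutions and $A$ is accretive, but, the selections lying only in $\L_1(L_1(L_1))$, this sign information cannot be exploited through It\^o's formula in $L_q$ directly; one first mollifies by $(I+\varepsilon A)^{-\sigma}$, which maps $L_1$ into $L_q$ by hypothesis (b'), reducing to a strong, $L_q$-valued equation, applies It\^o's formula to $\Phi_q$ of the mollified difference --- in the additive-noise case the noise cancels and the argument is pathwise --- and lets $\varepsilon\to0$, the limit being controlled by the integrability $\hat F(u),\tilde F^*(g)\in\L_1(L_1(L_1))$ already obtained; together with $(\mathrm{B}_{p,q})$ and Gronwall's lemma this gives $\E\sup_{t\le T}\norm{u_1(t)-u_2(t)}_{L_q}^p\lesssim\E\norm{u_{0,1}-u_{0,2}}_{L_q}^p$, which in particular yields uniqueness. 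The recurring obstacle is the mismatch between the mere $L_1$-integrability of $f(u)$ and the $L_q$-setting required for It\^o's formula: it is resolved by the two complementary uses of the resolvent --- the Yosida regularisation of $f$ for the a priori estimates and the spatial smoothing $(I+\varepsilon A)^{-\sigma}$ of (b') for the energy and comparison estimates --- together with the weak-$L_1$/convexity machinery replacing strong convergence of $g_\lambda$.
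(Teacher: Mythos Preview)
Your existence argument follows the paper's core strategy---the uniform bound on $\int g_\lambda\phi_q(u_\lambda)\,dm$, the Fenchel identity $g\bar w=\tilde F(\bar w)+\tilde F^*(g)$, superlinearity of $\tilde F^*$, de~la~Vall\'ee--Poussin, and weak-$L_1$ compactness---though organised differently: you work directly with the Yosida approximants $(u_\lambda)$ and must prove they are Cauchy, whereas the paper first constructs the generalized solution via Theorem~\ref{thm:gm} and then approximates it by strict mild solutions $\bar u_n$ whose convergence is already secured. Your forward reference to ``the Lipschitz dependence proved below'' creates a minor circularity that the paper's modular ordering avoids, but it is not fatal.

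The real gap is in the uniqueness step. You correctly plan to mollify by $R_\varepsilon^\sigma$ and let $\varepsilon\to0$ in It\^o's formula, claiming the passage is ``controlled by the integrability $\hat F(u),\tilde F^*(g)\in\L_1(L_1(L_1))$''. But you never explain the mechanism, and this is exactly where hypothesis (d) and the positivity part of (b') enter---you invoke neither. The problematic term is $\int_0^t\!\int_D(\ep{g}_1-\ep{g}_2)\,\phi_q(\ep{u}_1-\ep{u}_2)\,dx\,ds$, which converges only in $m$-measure; one needs uniform integrability to conclude. The paper bounds the integrand via Young's inequality with the pair $(\tilde F,\tilde F^*)$, then uses the \emph{evenness} of $F$ (whence $\hat F$ and $\tilde F^*$ are even, hence increasing on $\erre_+$ with $\hat F(cx)\le\hat F(x)$ for $0\le c\le1$) to perform the ``doubling''
\[
\hat F\bigl(\tfrac12(\ep{u}_1-\ep{u}_2)\bigr)\le\tfrac12\bigl(\hat F(\ep{u}_1)+\hat F(\ep{u}_2)\bigr),
\]
and likewise for $\tilde F^*$. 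Then the \emph{positivity-preserving} property of $R_\varepsilon$ yields Jensen's inequality for positive operators, $\hat F(R_\varepsilon^\sigma u_i)\le R_\varepsilon^\sigma\hat F(u_i)$ and $\tilde F^*(R_\varepsilon^\sigma g_i)\le R_\varepsilon^\sigma\tilde F^*(g_i)$, whose right-hand sides converge in $L_1(m)$ and are therefore UI. Without evenness the doubling step fails (one cannot in general bound $\hat F$ of a difference by the sum of $\hat F$ of the pieces); without positivity one cannot push $R_\varepsilon^\sigma$ past the convex functions. Your sketch names the destination but omits both of these steps, so as written the $\varepsilon\to0$ passage is unjustified.
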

The proof is given in Section \ref{sec:mild} below. It should be
remarked that unconditional well-posedness in $\H_p(L_q)$, i.e.
without any further conditions on $u$, remains an open problem.

Hypothesis (b) is satisfied by large classes of operators, for
instance all generators of sub-Markovian semigroups on $L_1(D)$ and of
symmetric semigroups on $L_2(D)$. Their resolvent are also positivity
preserving. The ``hypercontractivity'' of the resolvent in hypothesis
(b') is satisfied, for example, by non-degenerate second order
elliptic operators, under very mild regularity assumptions on the
coefficients, thanks to elliptic regularity results and Sobolev
embedding theorems.


\section{Strict mild solutions}
\label{sec:strict}
Consider the regularized equation
\begin{equation}
  \label{eq:reg}
  du_\lambda(t) + Au_\lambda(t)\,dt + f_\lambda(u_\lambda(t))\,dt
  = \eta u_\lambda(t)\,dt + B(u_\lambda(t))\,dW(t),
  \qquad u_\lambda(0)=u_0,
\end{equation}
where $f_\lambda:\erre \to \erre$, $\lambda>0$, is the Yosida
approximation of $f$, so that $f_\lambda$ is Lipschitz continuous on
$\erre$, as well as on $L_q$ (when viewed as evaluation operator). As
is natural to expect, well-posedness of \eqref{eq:reg} in the strict
mild sense holds in $\H_p(L_q)$ for all $p>0$ and $q \geq 2$.
\begin{prop}
  Let $p>0$ and $q \geq 2$. Assume that hypotheses $(\mathrm{A}_q)$
  and $(\mathrm{B}_{p,q})$ are verified. If $f:\erre \to \erre$ is
  Lipschitz continuous and $u_0 \in \L_p(L_q)$, then the equation
  \[
  du(t) + Au(t)\,dt + f(u(t))\,dt = B(u(t))\,dW(t),
  \qquad u(0)=u_0,
  \]
  admits a unique strict mild solution $u \in \H_p(L_q)$ with
  continuous paths, and the solution map $u_0 \mapsto u$ is Lipschitz
  continuous from $\L_p(L_q)$ to $\H_p(L_q)$.
\end{prop}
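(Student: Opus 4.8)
The plan is to recast the equation as a fixed-point problem and solve it by the contraction principle, the only nonelementary ingredient being the maximal inequality for stochastic convolutions of Theorem~\ref{thm:sconv}. Let $\mathcal{K}$ be the map defined on adapted $L_q$-valued processes by
\[
\mathcal{K}(v)(t) := S(t)u_0 - \int_0^t S(t-s)f(v(s))\,ds + \int_0^t S(t-s)B(s,v(s))\,dW(s).
\]
Since here $f$ is single-valued and continuous, a process $u\in\H_p(L_q)$ is a strict mild solution if and only if $u=\mathcal{K}(u)$: the selection $g$ in the definition is necessarily $f(u)$, and $f(u)\in L_\infty(L_q)\subset L_1(L_q)$ automatically, because $\abs{f(x)}\lesssim 1+\abs{x}$ and $D$ is bounded. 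So it suffices to show that $\mathcal{K}$ has a unique fixed point in $\H_p(L_q)$, depending Lipschitz-continuously on $u_0$.

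First I would check that $\mathcal{K}$ maps $\H_p(L_q)$ into itself and produces processes with continuous paths. The term $t\mapsto S(t)u_0$ is continuous and satisfies $\E\sup_{t\le T}\norm{S(t)u_0}_{L_q}^p\le\E\norm{u_0}_{L_q}^p$ by contractivity of $S$. The linear growth of $f$ gives $\norm{f(v(s))}_{L_q}\lesssim 1+\norm{v(s)}_{L_q}$, so $f(v)\in L_\infty(L_q)$ a.s., the deterministic convolution $\int_0^t S(t-s)f(v(s))\,ds$ is $L_q$-valued and continuous in $t$, with supremum controlled by $T(1+\norm{v}_{L_\infty(L_q)})$ up to a constant. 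By hypothesis $(\mathrm{B}_{p,q})$, $\norm{B(s,v(s))}_{\gamma(H,L_q)}\le\norm{B}_{\lip}\norm{v(s)}_{L_q}+\norm{B(s,0)}_{\gamma(H,L_q)}$, hence $B(\cdot,v)\in\L_p(L_2(0,T;\gamma(H,L_q)))$ (and $S(t-\cdot)B(\cdot,v)$ is stochastically integrable, the embedding for type-$2$ spaces being at our disposal); Theorem~\ref{thm:sconv} then provides a continuous modification of the stochastic convolution together with the bound
\[
\E\sup_{t\le T}\norm[\Big]{\int_0^t S(t-s)B(s,v(s))\,dW(s)}_{L_q}^p \lesssim \E\biggl(\int_0^T\norm{B(s,v(s))}_{\gamma(H,L_q)}^2\,ds\biggr)^{p/2}<\infty .
\]
Thus $\mathcal{K}(v)\in\H_p(L_q)$ has continuous paths, and it is clearly measurable and adapted.

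Next I would prove that $\mathcal{K}$ is a strict contraction on $\H_p(0,T_0;L_q)$ for $T_0$ small enough. For $v_1,v_2$ the initial-datum term cancels; using $\norm{S(t-s)}_{\cL(L_q)}\le1$ and the Lipschitz continuity of $f$, the drift contribution to $\norm{\mathcal{K}(v_1)-\mathcal{K}(v_2)}_{\H_p(0,T_0;L_q)}$ is bounded by $L\,T_0\,\norm{v_1-v_2}_{\H_p(0,T_0;L_q)}$, while Theorem~\ref{thm:sconv} together with the Lipschitz bound on $B$ bounds the stochastic-convolution contribution by a constant times
\[
\E\biggl(\int_0^{T_0}\norm{B}_{\lip}^2\norm{v_1(s)-v_2(s)}_{L_q}^2\,ds\biggr)^{p/2} \le \bigl(\norm{B}_{\lip}^2 T_0\bigr)^{p/2}\,\E\sup_{s\le T_0}\norm{v_1(s)-v_2(s)}_{L_q}^p .
\]
Hence $\norm{\mathcal{K}(v_1)-\mathcal{K}(v_2)}_{\H_p(0,T_0;L_q)}\le C\bigl(L\,T_0+\norm{B}_{\lip}\sqrt{T_0}\bigr)\norm{v_1-v_2}_{\H_p(0,T_0;L_q)}$ with $C=C(p,q)$, and choosing $T_0$ so that this coefficient is $<1$ — a choice depending only on $p$, $q$, $L$, $\norm{B}_{\lip}$, and \emph{not} on $u_0$ — the contraction principle yields a unique fixed point on $[0,T_0]$, i.e.\ a unique strict mild solution with continuous paths there, and the usual contraction estimate gives $\norm{u-\bar u}_{\H_p(0,T_0;L_q)}\lesssim\norm{u_0-\bar u_0}_{\L_p(L_q)}$. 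Since $u(T_0)$ is $\cF_{T_0}$-measurable and lies in $\L_p(L_q)$, and the coefficients restricted to any interval $[kT_0,(k+1)T_0]$ satisfy the same hypotheses with the same constants, one repeats the construction on consecutive intervals and concatenates the $\lceil T/T_0\rceil$ pieces — using the semigroup property to glue the convolutions — obtaining a strict mild solution on $[0,T]$, unique because it is unique on each subinterval, with the Lipschitz dependence on $u_0$ propagating (with a larger, but finite, constant) across the finitely many steps.

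The only genuine obstacle is the stochastic-convolution term: it cannot be turned into a contraction on the \emph{whole} interval $[0,T]$ by the customary device of passing to the equivalent norm of $\H_{p,\alpha}(L_q)$, because the exponential weight improves the drift bound (producing a factor $1/\alpha$) but leaves the Burkholder-type, $L^2$-in-time bound of Theorem~\ref{thm:sconv} essentially unchanged; this forces the short-interval-plus-patching argument above. Everything else is routine, and no special treatment of the range $0<p<2$ is needed, since Theorem~\ref{thm:sconv} and the computations above are valid for every $p>0$.
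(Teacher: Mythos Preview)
Your argument is correct and follows essentially the same route as the paper: a fixed-point map built from the drift and stochastic convolutions, contractivity on a short interval via the Lipschitz bounds and Theorem~\ref{thm:sconv}, then patching. The paper is terser (it asserts $\Gamma(\H_p(L_q))\subset\H_p(L_q)$ rather than checking it) and explicitly notes that for $0<p<1$ one applies Banach's principle in the metric $d(x,y)=\norm{x-y}^{1\wedge p}_{\H_p(L_q)}$, a point you gloss over but which your estimates accommodate without change.
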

\begin{proof}
  Since the proof proceeds by the classical fixed point argument, we
  omit some simple details. Consider the map, formally defined for the
  moment,
  \[
  \Gamma: (u_0,u) \mapsto S(t)u_0 - \int_0^t S(t-s) f(u(s))\,ds
  + \int_0^t S(t-s)B(u(s))\,dW(s).
  \]
  To prove existence and uniqueness, it suffices to show that
  $\Gamma(u_0,\cdot)$ is an everywhere defined contraction on
  $\H_p(L_q)$ for any $u_0 \in \L_p(L_q)$. One has
  \begin{align*}
  &\sup_{t\leq T} \norm[\bigg]{%
  \int_0^t S(t-s)\bigl(f(u(s))-f(v(s))\bigr)\,ds}_{L_q}\\
  &\hspace{5em} \leq \int_0^T \norm[\big]{f(u(s))-f(v(s))}_{L_q}\,ds
  \leq T \norm{f}_{\lip} \sup_{t\leq T} \norm[\big]{u(t)-v(t)}_{L_q},
  \end{align*}
  hence, writing $S \ast f$ to denote the second term in the
  above definition of $\Gamma$,
  \[
  \norm[\big]{S \ast (f(u)-f(v))}_{\H_p(L_q)} \leq
  T \norm{f}_{\lip} \norm[\big]{u-v}_{\H_p(L_q)}.
  \]
  Similarly, it follows by Theorem \ref{thm:sconv} that
  \begin{align*}
  &\E\sup_{t\leq T} \norm[\bigg]{%
  \int_0^t S(t-s)\bigl(B(u(s))-B(v(s))\bigr)\,dW(s)}_{L_q}^p\\
  &\hspace{5em} \lesssim \E\biggl( \int_0^T
  \norm[\big]{B(u(s)-B(v(s))}_{\gamma(H,L_q)}^2\,ds \biggr)^{p/2}\\
  &\hspace{5em} \leq T^{p/2} \norm{B}^p_{\lip}
  \norm[\big]{u-v}^p_{\H_p(L_q)},
  \end{align*}
  i.e., for a constant $N$ independent of $T$,
  \[
  \norm[\big]{S \diamond (B(u)-B(v))}_{\H_p(L_q)} \leq
  N T^{1/2} \, \norm{B}_{\lip} \norm[\big]{u-v}_{\H_p(L_q)}.
  \]
  Therefore, choosing $T$ small enough, one finds a constant
  $c \in \mathopen]0,1\mathclose[$ such that
  \[
  \norm[\big]{\Gamma(u_0,u) - \Gamma(u_0,v)}_{\H_p(L_q)} \leq c 
  \norm[\big]{u - v}_{\H_p(L_q)}.
  \]
  It is clear that $\Gamma(\H_p(L_q)) \subset \H_p(L_q)$. Recalling
  that the function
  \begin{align*}
  d: \H_p(L_q) \times \H_p(L_q) &\to \erre_+\\
  (x,y) &\mapsto \norm[\big]{x-y}^{1 \wedge p}_{\H_p(L_q)}
  \end{align*}
  is a metric on $\H_p(L_q)$, Banach's contraction principle yields
  the existence of a unique fixed point of $\Gamma$ on the complete
  metric space $(\H_p(L_q),d)$, which is the unique strict mild
  solution we are looking for on the interval $[0,T]$. Writing
  $u=\Gamma(u_0,u)$, $v=\Gamma(v_0,v)$, the Lipschitz continuity of
  the solution map follows by $c<1$ and
  \begin{align*}
  \norm[\big]{u-v}_{\H_p(L_q)} &= 
  \norm[\big]{\Gamma(u_0,u) - \Gamma(v_0,v)}_{\H_p(L_q)}\\
  &\leq \norm[\big]{\Gamma(u_0,u) - \Gamma(u_0,v)}_{\H_p(L_q)}
  + \norm[\big]{\Gamma(u_0,v) - \Gamma(v_0,v)}_{\H_p(L_q)}\\
  &\leq c \norm[\big]{u-v}_{\H_p(L_q)} + \norm[\big]{u_0-v_0}_{\L_p(L_q)}.
  \end{align*}
  By a classical patching argument, the smallness restriction on $T$
  can be removed. Continuity of paths follows by Theorem
  \ref{thm:sconv}.
\end{proof}
\begin{rmk}
  Even though quite sophisticated well-posedness results exist for
  SEEs on $L_q$ spaces with Lipschitz continuous coefficients
  (cf.~e.g.~\cite{Brz,vNVW}), the previous simple Proposition does not
  seem to follow from the existing literature. For instance, in
  \emph{op.~cit.} the semigroup $S$ is assumed to be analytic (but not
  necessarily accretive), and (in \cite{vNVW}) solutions are sought in
  spaces strictly contained in $\H_p(L_q)$, and $p>2$. It may indeed
  be possible to deduce the above well-posedness result from
  \emph{op.~cit.}, but it seems much easier to give a direct
  proof.
\end{rmk}

We now proceed to considering equation \eqref{eq:0}. In this section
we first show that a priori estimates on $u_\lambda$ imply
well-posedness of equation \eqref{eq:0}, then obtain such estimates
(under additional assumptions on $A$ and $B$), thus proving Theorem
\ref{thm:main0}. Our argument depends on passing to the limit as
$\lambda \to 0$ in the mild form of the regularized equation
\eqref{eq:reg}.

\subsection{A priori estimates imply well-posedness}
We begin establishing sufficient conditions for $(u_\lambda)_\lambda$
to be a Cauchy sequence in $\H_p(L_q)$, whose limit is then a natural
candidate as solution to \eqref{eq:0}.
\begin{lemma}     \label{lm:Cauchy}
  Let $p > 0$, $q \geq 2$, $p^* := p(2d+q-2)/q$, and assume that
  hypotheses $(\mathrm{A}_q)$ and $(\mathrm{B}_{p,q})$ are satisfied.
  If the sequence $(u_\lambda)$ is bounded in
  $\H_{p^*}(L_{2d+q-2})$, then $(u_\lambda)$ is a Cauchy sequence
  in $\mathbb{H}_p(L_q)$.
\end{lemma}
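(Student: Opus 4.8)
The plan is to write down the mild equations for $u_\lambda$ and $u_\mu$, subtract them, and apply It\^o's formula (in the form of Proposition \ref{prop:gf}) to the $L_q$-norm of the difference $u_\lambda - u_\mu$. Set $w := u_\lambda - u_\mu$; then $w$ solves, in the mild sense,
\[
dw + Aw\,dt + \bigl(f_\lambda(u_\lambda) - f_\mu(u_\mu)\bigr)\,dt
= \eta w\,dt + \bigl(B(u_\lambda) - B(u_\mu)\bigr)\,dW,
\qquad w(0)=0.
\]
Proposition \ref{prop:gf} applied to $w$ gives
\[
\norm{w(t)}_{L_q}^q \leq
-\int_0^t \Phi_q'(w)\bigl(f_\lambda(u_\lambda)-f_\mu(u_\mu)\bigr)\,ds
+ \eta\int_0^t \Phi_q'(w) w\,ds
+ \int_0^t \Phi_q'(w)\bigl(B(u_\lambda)-B(u_\mu)\bigr)\,dW
+ \tfrac12 q(q-1)\int_0^t \norm{B(u_\lambda)-B(u_\mu)}_{\gamma(H,L_q)}^2 \norm{w}_{L_q}^{q-2}\,ds.
\]
(One should first check that $w \in L_\infty(L_q)$, which holds because each $u_\lambda \in \H_p(L_q)$, so the hypothesis of Proposition \ref{prop:gf} is met.)

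The second step is to handle the drift term coming from the nonlinearity, which is the heart of the estimate. Recall that $\Phi_q'(w) = q\norm{w}_{L_q}^{q-2} J(w)$ and that $J(w) = \norm{w}_{L_q}^{2-q}\abs{w}^{q-2}w$, so $\Phi_q'(w)$ acts as integration against $q\abs{w}^{q-2}w$. Write $f_\lambda(u_\lambda)-f_\mu(u_\mu) = \bigl(f_\lambda(u_\lambda)-f_\mu(u_\lambda)\bigr) + \bigl(f_\mu(u_\lambda)-f_\mu(u_\mu)\bigr)$. For the second piece, monotonicity of $f_\mu$ gives $\bigl(f_\mu(u_\lambda)-f_\mu(u_\mu)\bigr)(u_\lambda-u_\mu)\geq 0$ pointwise, so when tested against $q\abs{w}^{q-2}w = q\abs{w}^{q-2}(u_\lambda-u_\mu)$ it has a favorable sign and can be dropped. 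The first piece is where the $\H_{p^*}(L_{2d+q-2})$ bound enters: using the standard estimate $\abs{f_\lambda(x) - f_\mu(x)} \leq (\lambda+\mu)\abs{f^\circ(J_\lambda x)}$ (or the cleaner bound $\abs{f_\lambda(x)-f_\mu(x)}\leq (\lambda+\mu)\abs{f_\lambda(x)}\vee\abs{f_\mu(x)}$ type inequalities for Yosida approximations of monotone graphs), together with the polynomial growth $\abs{f_\lambda(x)}\lesssim 1+\abs{x}^d$ uniformly in $\lambda$, one bounds $\abs{f_\lambda(u_\lambda)-f_\mu(u_\lambda)} \lesssim (\lambda+\mu)(1+\abs{u_\lambda}^d)$. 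Testing this against $q\abs{w}^{q-2}w$ and using H\"older with exponents tuned so that $\abs{w}^{q-1}$ pairs with $\abs{u_\lambda}^d$ in $L_{2d+q-2}$ — note $(q-1)\cdot\frac{2d+q-2}{q-1}$ does not quite work, rather one uses $\abs{w}^{q-1}\abs{u_\lambda}^d$ and notices $\frac{q-1}{2d+q-2}+\frac{d}{2d+q-2}+\frac{d-1}{2d+q-2}=1$, i.e. one factors out an extra power — yields a bound of the form $(\lambda+\mu)\,C\bigl(1+\norm{u_\lambda}_{L_{2d+q-2}}^{2d+q-2}+\norm{u_\mu}_{L_{2d+q-2}}^{2d+q-2}\bigr)$ plus a term $\lesssim \norm{w}_{L_q}^{q}$ absorbable by Gronwall. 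The precise exponent bookkeeping is routine once one observes that $2d+q-2 = (q-1)+2(d-1)+1$ distributes the powers of $w$, $u_\lambda$, $u_\mu$ correctly and that raising everything to power $p/q$ and taking $\sup$ in $t$ and expectation produces exactly $\norm{u_\lambda}_{\H_{p^*}(L_{2d+q-2})}^{p^*}$, which is bounded by hypothesis.

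The third step is the stochastic term: by the Burkholder/Davis-type inequality \eqref{eq:pippo} (or Theorem \ref{thm:sconv} applied to $w$ in its mild form, which is the cleaner route since $w = -S\ast(\cdots) + S\diamond(\cdots)$ already), together with the Lipschitz hypothesis $(\mathrm{B}_{p,q})$ giving $\norm{B(u_\lambda)-B(u_\mu)}_{\gamma(H,L_q)}\leq \norm{B}_{\lip}\norm{w}_{L_q}$, one controls $\E\sup_{t\leq T}\abs{\int_0^t \Phi_q'(w)(B(u_\lambda)-B(u_\mu))\,dW}^{p/q}$ by $\varepsilon\,\E\sup_{t\leq T}\norm{w}_{L_q}^{p} + N(\varepsilon)\,\E(\int_0^T\norm{w}_{L_q}^2\,ds)^{p/q \cdot \text{(appropriate exponent)}}$ via Young's inequality, in the same spirit as the proof of Theorem \ref{thm:sconv}; the quadratic-variation term $\int_0^t\norm{B(u_\lambda)-B(u_\mu)}_{\gamma(H,L_q)}^2\norm{w}_{L_q}^{q-2}\,ds \lesssim \int_0^t\norm{w}_{L_q}^q\,ds$ is handled identically. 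Collecting everything, raising to the power $p/q$, taking $\E\sup_{t\leq T}$, choosing $\varepsilon$ small to absorb, and applying the (stochastic) Gronwall lemma yields
\[
\norm{u_\lambda - u_\mu}_{\H_p(L_q)}^{p} \lesssim (\lambda+\mu)^{p/q}\bigl(1 + \sup_\nu \norm{u_\nu}_{\H_{p^*}(L_{2d+q-2})}^{p^*}\bigr)\,e^{CT} \xrightarrow{\lambda,\mu\to 0} 0,
\]
so $(u_\lambda)$ is Cauchy in $\H_p(L_q)$. The main obstacle is the exponent bookkeeping in the nonlinear drift term: one must verify that the H\"older splitting of $\abs{w}^{q-1}\abs{u_\lambda}^d$ closes precisely in $L_{2d+q-2}$ with room for a residual $\norm{w}_{L_q}^q$ Gronwall term, and that after the $p/q$-th power and the $\E\sup$ this reproduces exactly the norm $\H_{p^*}(L_{2d+q-2})$ with $p^* = p(2d+q-2)/q$ — which is why the lemma is stated with those specific values. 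The case $0<p\leq 1$ (or $0<p/q\leq 1$) needs the usual caveat that $\H_p(L_q)$ is only a quasi-Banach space and one works with the metric $d(x,y)=\norm{x-y}^{1\wedge p}$, but Lenglart's domination inequality (cited after \eqref{eq:pippo}) covers the stochastic term in that range as well.
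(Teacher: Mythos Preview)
Your plan has a genuine gap in the treatment of the nonlinear drift. The pointwise bound you invoke, $\abs{f_\lambda(x)-f_\mu(x)} \lesssim (\lambda+\mu)\bigl(1+\abs{x}^d\bigr)$, is false for general maximal monotone $f$: take $f=\operatorname{sgn}$, so that $f_\lambda(x)=x/\lambda$ on $[-\lambda,\lambda]$ and $\pm 1$ outside; at $x=\lambda$, $\mu=2\lambda$ one has $\abs{f_\lambda(\lambda)-f_{2\lambda}(\lambda)}=1/2$, which does not vanish as $\lambda\to 0$. The same example kills the variant $\abs{f_\lambda(x)-f_\mu(x)}\leq(\lambda+\mu)\,\abs{f_\lambda(x)}\vee\abs{f_\mu(x)}$. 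Your decomposition $f_\lambda(u_\lambda)-f_\mu(u_\mu)=(f_\lambda-f_\mu)(u_\lambda)+\bigl(f_\mu(u_\lambda)-f_\mu(u_\mu)\bigr)$ therefore does not give a term of order $\lambda+\mu$. A second symptom that something is off is the exponent count: had your bound been true, pairing $\abs{w}^{q-1}$ with $\abs{u_\lambda}^d$ via Young would land you in $L_{qd}$, not $L_{2d+q-2}$.

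The paper's argument avoids bounding $f_\lambda-f_\mu$ pointwise. Instead it works directly with the full product $\bigl(f_\lambda(u_\lambda)-f_\mu(u_\mu)\bigr)(u_\lambda-u_\mu)$ and uses the resolvent identity $u_\lambda-u_\mu=\lambda f_\lambda(u_\lambda)+\bigl(J_\lambda u_\lambda-J_\mu u_\mu\bigr)-\mu f_\mu(u_\mu)$ together with monotonicity of $f$ at the points $J_\lambda u_\lambda$, $J_\mu u_\mu$ (recall $f_\lambda(x)\in f(J_\lambda x)$). This kills the middle term with the right sign and leaves
\[
\bigl(f_\lambda(u_\lambda)-f_\mu(u_\mu)\bigr)(u_\lambda-u_\mu)
\;\geq\; -\tfrac12(\lambda+\mu)\bigl(\abs{f_\lambda(u_\lambda)}^2+\abs{f_\mu(u_\mu)}^2\bigr),
\]
which is where the \emph{squares} of $f_\lambda$, hence $\abs{u}^{2d}$, enter. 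Multiplying by $\abs{u_\lambda-u_\mu}^{q-2}\lesssim(\abs{u_\lambda}+\abs{u_\mu})^{q-2}$ then produces exactly $\abs{u_\lambda}^{2d+q-2}+\abs{u_\mu}^{2d+q-2}$, with no residual $\norm{w}_{L_q}^q$ term. The remaining $\int_0^t\norm{w}_{L_q}^q\,ds$ contributions (from the $\eta$-term and the It\^o correction) are absorbed not by Gronwall but by an exponential weight $v_\lambda:=e^{-\alpha t}u_\lambda$ with $\alpha$ chosen large; this is equivalent in spirit to your Gronwall step and is not where your plan goes wrong. The stochastic integral is handled essentially as you describe, via BDG, the ideal property, and Young's inequality with exponents $q/(q-1)$ and $q$.
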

\begin{proof}
  Let us define, for a constant parameter $\alpha>\eta$ to be
  chosen later, $v_\lambda(t) := e^{-\alpha t} u_\lambda(t)$ for
  all $t \geq 0$, so that
  \[
  dv_\lambda(t) = -\alpha v_\lambda(t) + e^{-\alpha t} u_\lambda(t),
  \]
  hence also, for $\mu>0$,
  \begin{equation}
  \label{eq:diff}
  \begin{split}
  d(v_\lambda-v_\mu) + \Bigl( (\alpha-\eta)(v_\lambda-v_\mu) &+ A(v_\lambda-v_\mu)
  + e^{-\alpha t} \bigl(f_\lambda(u_\lambda) - f_\mu(u_\mu)\bigr)\Bigr)\,dt\\
  &= e^{-\alpha t} \bigl(B(u_\lambda)-B(u_\mu)\bigr)\,dW,
  \end{split}
  \end{equation}
  in the (strict) mild sense, with initial condition
  $v_\lambda(0)-v_\mu(0)=0$.  Proposition \ref{prop:gf} yields
  \begin{equation}\label{eq:stima}
  \begin{split}
  &\norm[\big]{v_\lambda(t)-v_\mu(t)}_{L_{q}}^{q} 
  + q(\alpha-\eta) \int_0^t \norm{v_\lambda-v_\mu}_{L_q}^q\,ds\\
  &\hspace{5em}\qquad + \int_0^t e^{-\alpha s} \Phi'_{q}(v_\lambda-v_\mu) 
             \bigl(f_\lambda(u_\lambda) - f_\mu(u_\mu)\bigr)\,ds\\
  &\hspace{5em} \leq \int_0^t e^{-\alpha s} \Phi'_{q}(v_\lambda - v_\mu)
    \bigl(B(s,u_\lambda) - B(s,u_\mu)\bigr)\,dW\\
  &\hspace{5em}\qquad + \frac12 q(q-1) \int_0^t
  \norm[\big]{e^{-\alpha s}\bigl(B(u_\lambda)-B(u_\mu)\bigr)}_{\gamma(H,L_q)}^2
           \norm[\big]{v_\lambda-v_\mu}_{L_q}^{q-2}\,ds.
  \end{split}
  \end{equation}
  We are going to estimate each term appearing in this inequality.
  Note that $\Phi'_q(cx) = c^{q-1}\Phi_q'(x)$ for all $c \in \erre_+$
  and $x \in L_q$, hence
  \[
  \Phi'_{q}(v_\lambda-v_\mu) 
             \bigl(f_\lambda(u_\lambda) - f_\mu(u_\mu)\bigr) =
  e^{-\alpha(q-1)s} \, \Phi'_{q}(u_\lambda-u_\mu) 
             \bigl(f_\lambda(u_\lambda) - f_\mu(u_\mu)\bigr)
  \]
  and
  \[
  \Phi'_{q}(u_\lambda-u_\mu) \bigl(f_\lambda(u_\lambda) - f_\mu(u_\mu)\bigr)
  = q \int_D \abs{u_\lambda-u_\mu}^{q-2} (u_\lambda - u_\mu)
  \bigl(f_\lambda(u_\lambda) - f_\mu(u_\mu)\bigr)\,dx,
  \]
  where, setting $J_\lambda:=(I+\lambda f)^{-1}$, $\lambda>0$, and
  writing
  \begin{align*}
  u_{\lambda} - u_{\mu} &= u_{\lambda} - J_\lambda u_{\lambda}
  + J_\lambda u_{\lambda} - J_\mu u_{\mu}
  + J_\mu u_{\mu} - u_{\mu}\\
  &= \lambda f_\lambda(u_{\lambda}) + J_\lambda u_{\lambda}
     - J_\mu u_{\mu} - \mu f_\mu(u_{\mu}),
  \end{align*}
  one has, by monotonicity of $f$ and recalling that
  $f_\lambda = f \circ J_\lambda$,
  \begin{align*}
  \bigl( f_\lambda(u_{\lambda})-f_\mu(u_{\mu}) \bigr) (u_{\lambda}-u_{\mu})
  &\geq \bigl( f_\lambda(u_{\lambda}) - f_\mu(u_{\mu}) \bigr)
  \bigl( \lambda f_\lambda(u_{\lambda}) - \mu f_\mu(u_{\mu}) \bigr)\\
  &\geq \lambda \abs[\big]{f_\lambda(u_{\lambda})}^2 
  + \mu \abs[\big]{f_\mu(u_{\mu})}^2 - (\lambda + \mu) 
    \abs[\big]{f_\lambda(u_{\lambda})} \abs[\big]{f_\mu(u_{\mu})}\\
  &\geq -\frac{\mu}{2} \abs[\big]{f_\lambda(u_{\lambda})}^2
   -\frac{\lambda}{2} \abs[\big]{f_\mu(u_{\mu})}^2\\
  &\geq -\frac12 (\lambda+\mu) \big( \abs[\big]{f_\lambda(u_{\lambda})}^2
   + \abs[\big]{f_\mu(u_{\mu})}^2 \bigr).
  \end{align*}
  Moreover, since
  $\abs{f_\lambda(x)} \leq \abs{f(x)} \lesssim 1 + \abs{x}^d$ for all
  $x \in \erre$ and
  $\abs{x-y}^{q-2} \lesssim \bigl(\abs{x}+\abs{y}\bigr)^{q-2}$ for all
  $x$, $y \in \erre$ (the latter inequality holds because $q \geq 2$),
  one infers
  \begin{align*}
  &\bigl( f_\lambda(u_{\lambda})-f_\mu(u_{\mu}) \bigr) (u_{\lambda}-u_{\mu})
  \abs{u_\lambda - u_\mu}^{q-2}\\
  &\hspace{7em} \gtrsim -(\lambda + \mu)
  \bigl( 1 + \abs{u_\lambda}^{2d} + \abs{u_\mu}^{2d} \bigr)
  \abs{u_\lambda - u_\mu}^{q-2}\\
  &\hspace{7em} \gtrsim -(\lambda+\mu) \Bigl( 1 + 
   \bigl( \abs{u_\lambda} + \abs{u_\mu} \bigr)^{2d} \Bigr)
   \bigl( \abs{u_\lambda} + \abs{u_\mu} \bigr)^{q-2}\\
  &\hspace{7em} \gtrsim -(\lambda+\mu) \Bigl( 1 + 
   \abs{u_\lambda}^{2d+q-2} + \abs{u_\mu}^{2d+q-2} \Bigr),
  \end{align*}
  thus also
  \begin{align*}
  &\int_0^t e^{-\alpha s} \Phi'_{q}(v_\lambda-v_\mu) \bigl(f_\lambda(u_\lambda) -
  f_\mu(u_\mu)\bigr)\,ds\\
  &\hspace{7em} \gtrsim - (\lambda+\mu) \int_0^t e^{-q\alpha s} \Bigl( 1 +
  \norm[\big]{u_\lambda}_{L_{2d+q-2}}^{2d+q-2} +
  \norm[\big]{u_\mu}_{L_{2d+q-2}}^{2d+q-2} \Bigr)\,ds\\
  &\hspace{7em} \gtrsim - (\lambda+\mu) \, \frac{1-e^{-q\alpha t}}{q\alpha}
  \Bigl( 1 + \sup_{s\leq t} \norm[\big]{u_\lambda(s)}_{L_{2d+q-2}}^{2d+q-2} +
  \sup_{s\leq t} \norm[\big]{u_\mu(s)}_{L_{2d+q-2}}^{2d+q-2} \Bigr),
  \end{align*}
  which estimates the third term on the left-hand side of
  \eqref{eq:stima}.

  Since $\Phi''_q(cx) = c^{q-2}\Phi''_q(x)$ for all $c \in \erre_+$
  and $x \in L_q$, recalling \eqref{eq:n2} and \eqref{eq:tr-ineq}, the
  Lipschitz continuity of $B$ implies that the integrand in the last
  term on the right-hand side of \eqref{eq:stima} is estimated by
  \begin{multline*}
    e^{-q\alpha s} \norm[\big]{u_\lambda-u_\mu}_{L_{q}}^{q-2}
    \norm[\big]{B(u_\lambda)-B(u_\mu)}^2_{\gamma(H,L_{q})}\\
    \leq \norm[\big]{B}^2_{\lip} e^{-q\alpha s}
    \norm[\big]{u_\lambda-u_\mu}_{L_{q}}^{q} = \norm[\big]{B}^2_{\lip}
    \norm[\big]{v_\lambda-v_\mu}_{L_{q}}^{q}.
  \end{multline*}
  In particular, collecting the second term on the right-hand side and
  the second term on the left-hand side of \eqref{eq:stima}, we obtain
  \begin{align*}
  \norm[\big]{v_\lambda(t)-v_\mu(t)}_{L_{q}}^{q} 
  &+ q\bigl(\alpha - \eta - \norm[\big]{B}^2_{\lip} (q-1)/2 \bigr) 
     \int_0^t \norm{v_\lambda-v_\mu}_{L_q}^q\,ds\\
  &\lesssim (\lambda + \mu) \, \frac{1-e^{-q\alpha t}}{q\alpha}
   \Bigl( 1 + \sup_{s\leq t} \norm[\big]{u_\lambda(s)}_{L_{2d+q-2}}^{2d+q-2} +
  \sup_{s\leq t} \norm[\big]{u_\mu(s)}_{L_{2d+q-2}}^{2d+q-2} \Bigr)\\
  &\quad + \abs[\bigg]{\int_0^t e^{-\alpha s} \Phi'_{q}(v_\lambda - v_\mu)
    \bigl(B(u_\lambda) - B(u_\mu)\bigr)\,dW}.
  \end{align*}
  Raising both sides to the power $p/q$, taking supremum in
  time,\footnote{Note that $A(t)+B(t) \leq C(t)$ for all $t$, with $A$,
    $B$, $C$ positive functions of $t$, implies
    $\sup_t A(t) \leq \sup_t C(t)$ and $\sup_t B(t) \leq \sup_t C(t)$,
    hence $\sup_t A(t) + \sup_t B(t) \leq 2\sup_t C(t)$.} %
  then expectation, one gets, setting $p^*:=(2d+q-2)p/q$,
  \begin{equation}
    \label{eq:stronza}
  \begin{split}
  &\E\sup_{t\leq T} \norm[\big]{v_\lambda(t)-v_\mu(t)}_{L_q}^p\\
  &\hspace{2em}
   + q^{p/q}
     \Bigl( \alpha - \eta - \frac12 \norm[\big]{B}^2_{\lip} (q-1) \Bigr)^{p/q}
     \E\biggl( \int_0^{T} \norm[\big]{v_\lambda-v_\mu}^q_{L_q}\,ds
       \biggr)^{p/q}\\
  &\hspace{5em} \lesssim
  (\lambda+\mu)^{p/q} \,
  \Bigl( 1 + \E\sup_{t \leq T} \norm[\big]{u_\lambda(t)}_{L_{2d+q-2}}^{p^*}
  + \E\sup_{t \leq T} \norm[\big]{u_\mu(t)}_{L_{2d+q-2}}^{p^*} \Bigr)\\
  &\hspace{5em}\quad + \E\sup_{t\leq T} \abs[\bigg]{%
   \int_0^t e^{-\alpha s} \Phi'_{q}(v_\lambda - v_\mu)
    \bigl(B(u_\lambda) - B(u_\mu)\bigr)\,dW}^{p/q},
  \end{split}
  \end{equation}
  where, by the Burkholder-Davis-Gundy inequality,
  \begin{align*}
  &\E\sup_{t\leq T} \abs[\bigg]{%
   \int_0^t e^{-\alpha s} \Phi'_{q}(v_\lambda - v_\mu)
    \bigl(B(u_\lambda) - B(u_\mu)\bigr)\,dW}^{p/q}\\
  &\hspace{5em} \lesssim
  \E\biggl( \int_0^{T} \norm[\big]{%
    e^{-\alpha s} \Phi'_{q}(v_\lambda - v_\mu) 
    \bigl(B(u_\lambda)-B(u_\mu)\bigr)}^2_{\gamma(H,\erre)}\,ds 
  \biggr)^{\frac{p}{2q}}.
  \end{align*}
  Thanks to the ideal property of $\gamma$-Radonifying operators,
  identity \eqref{eq:nJ}, and the Lipschitz continuity of $B$, one has
  \begin{align*}
    &\int_0^{T} 
    \norm[\big]{e^{-\alpha s} \Phi'_{q}(v_\lambda-v_\mu)%
         \bigl(B(u_\lambda)-B(u_\mu)\bigr)}^2_{\gamma(H,\erre)}\,ds\\
    &\hspace{5em} \leq \int_0^{T} \norm[\big]{\Phi'_{q}(v_\lambda-v_\mu)}^2_{L_{q'}}
    \norm[\big]{e^{-\alpha s}\bigl(B(u_{\lambda})-B(u_\mu)\bigr)}^2_{\gamma(H,L_{q})}\,ds\\
    &\hspace{5em} \lesssim \int_0^{T} \norm[\big]{v_\lambda-v_\mu}^{2(q-1)}_{L_{q}}
    \norm[\big]{e^{-\alpha s}\bigl(B(u_{\lambda})-B(u_\mu)\bigr)}^2_{\gamma(H,L_{q})}\,ds\\
    &\hspace{5em} \leq \norm[\big]{B}^2_{\lip} \sup_{t\leq T}
            \norm[\big]{v_\lambda-v_\mu}^{2(q-1)}_{L_q}
            \int_0^{T} \norm[\big]{v_{\lambda}-v_\mu}^2_{L_q}\,ds.
  \end{align*}
  This implies
  \begin{align*}
    &\E\sup_{t\leq T} \abs[\bigg]{%
    \int_0^t e^{-\alpha s} \Phi'_{q}(v_\lambda - v_\mu)
    \bigl(B(u_\lambda) - B(u_\mu)\bigr)\,dW}^{p/q}\\
    &\hspace{3em} \lesssim \norm[\big]{B}_{\lip}^{p/q}
    \E\sup_{t \leq T} \norm[\big]{v_\lambda-v_\mu}^{p(q-1)/q}_{L_{q}}
     \biggl(\int_0^{T} \norm[\big]{v_{\lambda}-v_\mu)}^2_{L_q}\,ds
     \biggr)^{\frac{p}{2q}}\\
    &\hspace{3em} \leq \varepsilon \norm[\big]{B}_{\lip}^{p/q}
    \E\sup_{t \leq T} \norm[\big]{v_\lambda-v_\mu}^{p}_{L_{q}}
    + N_1(\varepsilon) \norm[\big]{B}_{\lip}^{p/q}
    \E\biggl(\int_0^{T} \norm[\big]{v_{\lambda}-v_\mu)}^2_{L_q}\,ds
      \biggr)^{\frac{p}{2}}\\
    &\hspace{3em} \leq \varepsilon \norm[\big]{B}_{\lip}^{p/q}
    \E\sup_{t \leq T} \norm[\big]{v_\lambda-v_\mu}^{p}_{L_{q}}
    + N_1(\varepsilon) \norm[\big]{B}_{\lip}^{p/q} T^{1-\frac2q}
    \E\biggl(\int_0^{T} \norm[\big]{v_{\lambda}-v_\mu)}^q_{L_q}\,ds
      \biggr)^{\frac{p}{q}},
  \end{align*}
  where we have used Young's inequality with exponents $q/(q-1)$ and
  $q$ in the second-last step, and H\"older's inequality with
  exponents $q/2$ and $q/(q-2)$ in the last step (recall that
  $q \geq 2$).  By \eqref{eq:stronza}, we conclude that there exist
  constants $N_2$, $N_3$, independent of $\lambda$, $\mu$ and
  $\alpha$, with $N_2$ also independent of $\varepsilon$, such that
  \begin{align*}
    &\E\sup_{t\leq T} \norm[\big]{v_\lambda(t)-v_\mu(t)}_{L_q}^p
    + q^{p/q} \Bigl( 
      \alpha - \eta - \frac12 (q-1) \norm[\big]{B}^2_{\lip} \Bigr)^{p/q}
      \E\biggl( \int_0^{T} \norm[\big]{v_\lambda-v_\mu}^q_{L_q}\,ds
       \biggr)^{p/q}\\
    &\hspace{5em}\leq \varepsilon N_2 \E\sup_{t\leq T}
    \norm[\big]{v_\lambda(t)-v_\mu(t)}_{L_q}^p
    + N_3 \E\biggl( \int_0^{T} \norm[\big]{v_\lambda-v_\mu}^q_{L_q}\,ds
       \biggr)^{p/q}\\
    &\hspace{5em}\quad + (\lambda+\mu)^{p/q} \,
    \Bigl( 1 + \E\sup_{t \leq T} \norm[\big]{u_\lambda(t)}_{L_{2d+q-2}}^{p^*}
    + \E\sup_{t \leq T} \norm[\big]{u_\mu(t)}_{L_{2d+q-2}}^{p^*} \Bigr).
  \end{align*}
  It is immediately seen that, choosing $\varepsilon$ small enough and
  $\alpha$ large enough, we are left with
  \[
  \E\sup_{t\leq T} \norm[\big]{v_\lambda(t)-v_\mu(t)}_{L_q}^p
  \lesssim
  (\lambda+\mu)^{p/q} \,
  \Bigl( 1 + \E\sup_{t \leq T} \norm[\big]{u_\lambda(t)}_{L_{2d+q-2}}^{p^*}
  + \E\sup_{t \leq T} \norm[\big]{u_\mu(t)}_{L_{2d+q-2}}^{p^*} \Bigr),
  \]
  which implies, by the boundedness of $(u_\lambda)$ in
  $\H_{p^*}(L_{2d+q-2})$, that $(u_\lambda)$ is a Cauchy sequence in
  $\H_{p,\alpha}(L_q)$, hence also in $\H_p(L_q)$ by equivalence of
  (quasi-)norms.
\end{proof}

The strong convergence of $u_\lambda$ to a process $u \in \H_p(L_q)$
just established does not seem sufficient, unfortunately, to prove
that $u$ is a strict mild solution to \eqref{eq:0}. In fact, writing the
regularized equation \eqref{eq:reg} in its integral form
\begin{multline}
\label{eq:rm}
u_\lambda(t)
+ \int_0^t S(t-s)f_\lambda(u_\lambda(s))\,ds\\
= S(t)u_0 + \eta \int_0^t S(t-s)u_\lambda(s)\,ds
+ \int_0^t S(t-s)B(s,u_\lambda(s))\,dW(s),
\end{multline}
difficulties appear, as is natural to expect, when trying to pass to
the limit in the integral on the left-hand side. We are going to show
that boundedness assumptions on $(u_\lambda)$ in a smaller space imply
convergence of the term containing $f_\lambda(u_\lambda)$ in a
suitable norm, which is turn yields well-posedness in the strict mild
sense. First we state and prove a Lipschitz continuity result for
the solution map $u_0 \mapsto u$ of strict mild solution, which
immediately implies uniqueness.
\begin{lemma}     \label{lm:smile}
  Let $u_1$, $u_2$ be strict mild solutions in $\H_p(L_q)$ to
  \eqref{eq:0} with initial conditions $u_{01}$ and $u_{02}$,
  respectively. Then
  \[
  \norm{u_1-u_2}_{\H_p(L_q)} \lesssim 
  \norm{u_{01} - u_{02}}_{\L_p(L_q)}.
  \]
  In particular, if \eqref{eq:0} admits a strict mild solution
  $u \in \H_p(L_q)$, then it is unique and the solution map is
  Lipschitz continuous from $\L_p(L_q)$ to $\H_p(L_q)$.
\end{lemma}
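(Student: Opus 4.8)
The plan is to run the argument of Lemma~\ref{lm:Cauchy}, but comparing the two strict mild solutions directly rather than two Yosida regularizations. Fix selections $g_i \in f(u_i)$ with $g_i \in L_1(L_q)$ almost surely, $i=1,2$, and, for a constant $\alpha$ to be chosen large, set $w(t) := e^{-\alpha t}\bigl(u_1(t)-u_2(t)\bigr)$. Subtracting the mild equations~\eqref{eq:mild} for $u_1$ and $u_2$ and multiplying by $e^{-\alpha t}$, one sees, exactly as for \eqref{eq:diff}, that $w$ solves, in the mild sense,
\[
dw(t) + \Bigl( (\alpha-\eta)w(t) + Aw(t) + e^{-\alpha t}\bigl(g_1(t)-g_2(t)\bigr) \Bigr)\,dt = e^{-\alpha t}\bigl(B(u_1(t))-B(u_2(t))\bigr)\,dW(t),
\]
with initial datum $w(0)=u_{01}-u_{02}$. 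Here $w \in L_\infty(L_q)$, the drift (with the linear part $A$ removed) lies in $L_1(L_q)$ almost surely, since $u_1,u_2 \in L_\infty(L_q)$ and $g_i \in L_1(L_q)$, and, by the Lipschitz continuity of $B$ and $u_1,u_2 \in L_\infty(L_q)$, the diffusion coefficient belongs to $L_2(0,T;\gamma(H,L_q))$ almost surely; hence Proposition~\ref{prop:gf} applies (with linear part $A$ and the remaining terms absorbed into the drift). Using $\Phi_q'(cx)=c^{q-1}\Phi_q'(x)$ to rewrite the $g$-term, and recalling that by monotonicity of $f$ one has $\Phi_q'(u_1-u_2)(g_1-g_2)=q\int_D \abs{u_1-u_2}^{q-2}(u_1-u_2)(g_1-g_2)\,dx \ge 0$, which may therefore be discarded, it yields, for every $t \in [0,T]$,
\begin{align*}
&\norm{w(t)}_{L_q}^q + q(\alpha-\eta)\int_0^t \norm{w(s)}_{L_q}^q\,ds \le \norm{u_{01}-u_{02}}_{L_q}^q + \int_0^t e^{-\alpha s}\,\Phi_q'(w(s))\bigl(B(u_1(s))-B(u_2(s))\bigr)\,dW(s)\\
&\qquad\qquad + \frac{q(q-1)}{2}\int_0^t \norm[\big]{e^{-\alpha s}\bigl(B(u_1(s))-B(u_2(s))\bigr)}_{\gamma(H,L_q)}^2 \norm{w(s)}_{L_q}^{q-2}\,ds.
\end{align*}

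Since $\norm{B(u_1)-B(u_2)}_{\gamma(H,L_q)}\le \norm{B}_{\lip}\norm{u_1-u_2}_{L_q}$, the last integrand is bounded by $\norm{B}_{\lip}^2 e^{-2\alpha s}\norm{u_1-u_2}_{L_q}^2\norm{w}_{L_q}^{q-2}=\norm{B}_{\lip}^2\norm{w}_{L_q}^q$, so, choosing $\alpha > \eta + \tfrac12(q-1)\norm{B}_{\lip}^2$ and writing $c_\alpha := q\bigl(\alpha-\eta-\tfrac12(q-1)\norm{B}_{\lip}^2\bigr)>0$,
\[
\norm{w(t)}_{L_q}^q + c_\alpha\int_0^t \norm{w(s)}_{L_q}^q\,ds \le \norm{u_{01}-u_{02}}_{L_q}^q + M(t),\qquad M(t):=\int_0^t e^{-\alpha s}\Phi_q'(w(s))\bigl(B(u_1(s))-B(u_2(s))\bigr)\,dW(s).
\]
From here I proceed exactly as in Lemma~\ref{lm:Cauchy}: evaluating the (nondecreasing) integral at $t=T$ and adding the bound for $\sup_{t\le T}\norm{w(t)}_{L_q}^q$, then raising to the power $p/q$ and taking expectations,
\[
\E\sup_{t\le T}\norm{w(t)}_{L_q}^p + c_\alpha^{p/q}\,\E\Bigl(\int_0^T \norm{w(s)}_{L_q}^q\,ds\Bigr)^{p/q} \lesssim \E\norm{u_{01}-u_{02}}_{L_q}^p + \E\sup_{t\le T}\abs{M(t)}^{p/q}.
\]
By the Burkholder-Davis-Gundy inequality, the ideal property of $\gamma$-Radonifying operators, \eqref{eq:nJ} and the Lipschitz continuity of $B$, one has $\norm{e^{-\alpha s}\Phi_q'(w)(B(u_1)-B(u_2))}_{\gamma(H,\erre)}\lesssim \norm{B}_{\lip}\norm{w}_{L_q}^q$, whence, arguing verbatim as in Lemma~\ref{lm:Cauchy} (Young's inequality with exponents $q/(q-1)$ and $q$, then H\"older with exponents $q/2$ and $q/(q-2)$), for every $\varepsilon>0$,
\[
\E\sup_{t\le T}\abs{M(t)}^{p/q} \le \varepsilon N_1\,\E\sup_{t\le T}\norm{w(t)}_{L_q}^p + N_2(\varepsilon)\,T^{(q-2)p/(2q)}\,\E\Bigl(\int_0^T \norm{w(s)}_{L_q}^q\,ds\Bigr)^{p/q},
\]
with $N_1$ depending only on $p$, $q$, $\norm{B}_{\lip}$, and in particular neither on $\alpha$ nor on $\varepsilon$. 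Choosing first $\varepsilon$ so small that $\varepsilon N_1$ is absorbed into the left-hand side, and then $\alpha$ so large that $c_\alpha^{p/q}$ dominates $N_2(\varepsilon)T^{(q-2)p/(2q)}$ (possible since $T$ is fixed and $\varepsilon$ is now fixed, while $c_\alpha\to\infty$), we are left with $\E\sup_{t\le T}\norm{w(t)}_{L_q}^p \lesssim \E\norm{u_{01}-u_{02}}_{L_q}^p$, i.e. $\norm{u_1-u_2}_{\H_{p,\alpha}(L_q)}\lesssim\norm{u_{01}-u_{02}}_{\L_p(L_q)}$. The claimed estimate then follows from the equivalence of the (quasi-)norms $\norm{\cdot}_{\H_{p,\alpha}(L_q)}$ and $\norm{\cdot}_{\H_p(L_q)}$, and specializing to $u_{01}=u_{02}$ yields uniqueness.

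The only delicate point, precisely as in Lemma~\ref{lm:Cauchy}, is the bookkeeping of constants in this double absorption: one must make sure that the constant $N_1$ in front of the quantity to be absorbed is independent of $\alpha$, so that the order ``$\varepsilon$ first, $\alpha$ afterwards'' is legitimate. Beyond that there is no genuinely new difficulty; in fact the present argument is slightly simpler than that of Lemma~\ref{lm:Cauchy}, because $g_1$ and $g_2$ are \emph{exact} selections of $f(u_1)$ and $f(u_2)$, so the monotonicity term is manifestly nonnegative and can simply be dropped, and no remainder depending on higher-order norms of the solutions is produced on the right-hand side --- which is exactly why, in contrast with Lemma~\ref{lm:Cauchy}, no boundedness of the solutions in a smaller space is needed here.
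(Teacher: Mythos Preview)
Your proof is correct and follows essentially the same route as the paper: apply Proposition~\ref{prop:gf} to the exponentially weighted difference, drop the nonnegative monotonicity term, absorb the It\^o correction via the Lipschitz bound on $B$, and control the martingale term by BDG plus Young's inequality, choosing first $\varepsilon$ small and then $\alpha$ large. The only cosmetic difference is that the paper bounds $\norm{M_T^*}_{\L_{p/q}}^{1/q}$ via the interpolation $\norm{\phi}_{L_{2q}(0,T)} \le \varepsilon\norm{\phi}_{L_\infty(0,T)} + N(\varepsilon)\norm{\phi}_{L_q(0,T)}$ applied to $s \mapsto \norm{w(s)}_{L_q}$, whereas you pull the supremum out first and then use H\"older to pass from $L_2(0,T)$ to $L_q(0,T)$ --- but this is exactly the manipulation of Lemma~\ref{lm:Cauchy} and yields the same two absorbable terms.
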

\begin{proof}
  We use again an argument based on It\^o's formula and elementary
  inequalities. By definition of strict mild solution, we have $f(u_1)$,
  $f(u_2) \in L_1(L_q)$. Therefore, from
  \begin{multline*}
  d(u_1-u_2) + A(u_1-u_2)\,dt + \bigl(f(u_1)-f(u_2)\bigr)\,dt\\
  = \eta (u_1-u_2)\,dt + \bigl(B(u_1)-B(u_2)\bigr)\,dW,
  \qquad u_1(0)-u_2(0)=u_{01} - u_{02},
  \end{multline*}
  and Proposition \ref{prop:gf}, it follows
  \begin{align*}
  &\norm[\big]{v_1(t)-v_2(t)}_{L_{q}}^{q} 
  + q(\alpha-\eta) \int_0^t \norm{v_1-v_2}_{L_q}^q\,ds\\
  &\hspace{5em}\leq \norm[\big]{u_{01}-u_{02}}_{L_q}^q 
  + M(t)\\
  &\hspace{5em}\quad + \frac12 q(q-1) \int_0^t
  \norm[\big]{e^{-\alpha s}\bigl(B(u_1)-B(u_2)\bigr)}_{\gamma(H,L_q)}^2
           \norm[\big]{v_1-v_2}_{L_q}^{q-2}\,ds,
  \end{align*}
  where $v_i := e^{-\alpha\cdot} u_i$, $i=1,2$, and
  \[
  M(t) := \int_0^t e^{-\alpha s} \Phi'_{q}(v_1(s) - v_2(s))
  \bigl(B(u_1(s)) - B(u_2(s))\bigr)\,dW(s).
  \]
  By the Lipschitz continuity of $B$,
  \[
  \norm[\big]{e^{-\alpha s}\bigl(B(u_1)-B(u_2)\bigr)}_{\gamma(H,L_q)}^2
           \norm[\big]{v_1-v_2}_{L_q}^{q-2}
  \leq \norm[\big]{B}^2_{\lip} \norm[\big]{v_1-v_2}_{L_q}^{q},
  \]
  hence
  \begin{align*}
  &\norm[\big]{v_1(t)-v_2(t)}_{L_{q}}^{q}
  + q \Bigl( \alpha - \eta - \frac12 (q-1) \norm[\big]{B}^2_{\lip}
    \Bigr) \int_0^t \norm{v_1-v_2}_{L_q}^q\,ds\\
  &\hspace{5em} \leq \norm[\big]{u_{01}-u_{02}}_{L_q}^q + M(t),
  \end{align*}
  and we choose $\alpha$ so that
  $\bigl( \alpha - \eta - (q-1) \norm{B}^2_{\lip}/2\bigr)>0$.
  Taking suprema in time, raising to the power $p/q$, taking
  expectation, and raising to the power $1/p$, we get
  \begin{align*}
  &\norm[\big]{v_1-v_2}_{\H_p(L_q)} + q^{1/q} \Bigl( \alpha - \eta 
     - \frac12 (q-1) \norm[\big]{B}^2_{\lip} \Bigr)^{1/q}
  \norm[\big]{v_1-v_2}_{\L_p(L_q(L_q)))}\\
  &\hspace{5em} \lesssim \norm[\big]{u_{01}-u_{02}}_{\L_p(L_q)}
  + \norm[\big]{M_T^*}^{1/q}_{\L_{p/q}},
  \end{align*}
  where the implicit constant depends only on $p$ and $q$, and
  $M^*_T:=\sup_{t\leq T} \abs{M_t}$. The Burkholder-Davis-Gundy
  inequality yields
  \begin{align*}
  \norm[\big]{M_T^*}^{1/q}_{\L_{p/q}} &\lesssim 
  \norm[\big]{[M,M]_T^{1/2}}^{1/q}_{\L_{p/q}} =
  \norm[\big]{[M,M]_T^{1/{2q}}}_{\L_p}\\
  &= \norm[\bigg]{\biggl(%
  \int_0^T \norm[\big]{v_1-v_2}_{L_q}^{2(q-1)}%
  \norm[\big]{e^{-\alpha s}(B(u_1)-B(u_2))}^2_{\gamma(H,L_q)}
  \biggr)^{\frac{1}{2q}}}_{\L_p}\\
  &\leq \norm[\big]{B}^{2/q}_{\lip} \norm[\big]{v_1-v_2}_{\L_p(L_{2q}(L_q))}\\
  &\leq \varepsilon \norm[\big]{B}^{2/q}_{\lip} \norm[\big]{v_1-v_2}_{\H_p(L_q)}
  + N(\varepsilon) \norm[\big]{v_1-v_2}_{\L_p(L_{q}(L_q))},
  \end{align*}
  where we have used the Lipschitz continuity of $B$ and the inequality
  \[
  \norm{\phi}_{L_{2q}} \leq \norm{\phi}^{1/2}_{L_q} \norm{\phi}^{1/2}_{L_\infty}
  \leq \varepsilon \norm{\phi}_{L_\infty} + N(\varepsilon) \norm{\phi}_{L_q}
  \qquad \forall \phi \in L_q \cap L_\infty.
  \]
  We are thus left with
  \begin{align*}
  &\norm[\big]{v_1-v_2}_{\H_p(L_q)} + q^{1/q} \Bigl( \alpha - \eta - \frac12
     (q-1) \norm[\big]{B}^2_{\lip} \Bigr)^{1/q}
  \norm[\big]{v_1-v_2}_{\L_p(L_q(L_q)))}\\
  &\hspace{3em} \lesssim \norm[\big]{u_{01}-u_{02}}_{\L_p(L_q)}
  + \varepsilon \norm[\big]{B}^{2/q}_{\lip} \norm[\big]{v_1-v_2}_{\H_p(L_q)}
  + N(\varepsilon) \norm[\big]{v_1-v_2}_{\L_p(L_{q}(L_q))}.
  \end{align*}
  Since the implicit constant is independent of $\alpha$ and
  $\varepsilon$, this implies, upon choosing $\alpha$ large enough and
  $\varepsilon$ small enough, and recalling that the (quasi-)norms
  $\norm{\cdot}_{\H_{p,\alpha}(L_q)}$ and $\norm{\cdot}_{\H_p(L_q)}$ are
  equivalent,
  \[
  \norm[\big]{u_1-u_2}_{\H_p(L_q)} \eqsim
  \norm[\big]{u_1-u_2}_{\H_{p,\alpha}(L_q)} =
  \norm[\big]{v_1-v_2}_{\H_p(L_q)}\\
  \lesssim \norm[\big]{u_{01}-u_{02}}_{\L_p(L_q)}.
  \qedhere
  \]
\end{proof}

To prove uniqueness of solutions in $\H_p(L_q)$ we have used in a
crucial way the condition $f(u) \in L_1(L_q)$, which allows one to
apply Proposition \ref{prop:gf} (i.e. to use It\^o's formula). It is
thus natural to look for conditions ensuring weak compactness of
$f_\lambda(u_\lambda)$ in a functional space contained in
$\L_0(L_1(L_q))$. This is the motivation for the following
well-posedness result, conditional on boundedness of $(u_\lambda)$ in
a suitable norm.
\begin{prop}
  \label{prop:cwp}
  Let $p>0$, $q \geq 2$ and $p^* := p(2d+q-2)/q > d$. If the sequence
  $(u_\lambda)$ is bounded in $\H_{p^*}(L_{qd})$, then \eqref{eq:0}
  admits a unique strict mild solution $u \in \H_p(L_q)$ with
  continuous paths, and $u_0 \mapsto u$ is Lipschitz continuous from
  $\L_p(L_q)$ to $\H_p(L_q)$.
\end{prop}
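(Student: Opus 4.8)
The plan is as follows. Since $D$ is bounded and $qd \geq 2d+q-2$ (equivalently $(d-1)(q-2) \geq 0$), there is a continuous embedding $L_{qd} \embed L_{2d+q-2}$, so boundedness of $(u_\lambda)$ in $\H_{p^*}(L_{qd})$ implies boundedness in $\H_{p^*}(L_{2d+q-2})$; Lemma \ref{lm:Cauchy} then gives that $(u_\lambda)$ is a Cauchy sequence in $\H_p(L_q)$, and we call $u$ its limit. Note that $p^* > d$ entails $p > dq/(2d+q-2) \geq 1$, hence $p > 1$, which dispenses with quasi-Banach subtleties below. It then remains only to check that $u$ is a strict mild solution to \eqref{eq:0}: uniqueness and Lipschitz continuity of the solution map are already provided by Lemma \ref{lm:smile}, while continuity of paths will follow a posteriori from \eqref{eq:mild}, because $t \mapsto S(t)u_0$ and $t \mapsto \int_0^t S(t-s)h(s)\,ds$ (for $h \in L_1(L_q)$) are continuous and the stochastic convolution has continuous paths by Theorem \ref{thm:sconv}.

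The key a priori input is weak compactness of the regularized nonlinearities, and the reason for the hypothesis $p^* > d$. From $\abs{f_\lambda} \leq \abs{f}$ and $\abs{f(x)} \lesssim 1+\abs{x}^d$ one obtains $\norm{f_\lambda(u_\lambda(t))}_{L_q} \lesssim 1 + \norm{u_\lambda(t)}_{L_{qd}}^d$, so $(f_\lambda(u_\lambda))$ is bounded in $\H_{p^*/d}(L_q)$; since $p^*/d > 1$, it is bounded in the reflexive space $L_{p^*/d}(\Omega \times (0,T);L_q)$ and uniformly integrable with respect to $m$. Along a subsequence, $f_\lambda(u_\lambda) \rightharpoonup g$ weakly in $L_{p^*/d}(\Omega \times (0,T);L_q)$ (hence also weakly in $L_1(m)$), with $g$ adapted --- the adapted processes form a strongly, hence weakly, closed subspace --- and $g \in L_1(L_q)$ $\P$-a.s. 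To see that $g \in f(u)$ $m$-a.e., fix $s$ with $1 < s \leq \min(p,q,p^*/d)$: then $u_\lambda \to u$ strongly in $\H_p(L_q) \embed L_s(m)$, while $(f_\lambda(u_\lambda))$ is bounded in $L_s(m)$ and converges weakly to $g$ there; consequently $\lambda f_\lambda(u_\lambda) \to 0$ in $L_s(m)$ and $J_\lambda u_\lambda := (I+\lambda f)^{-1}u_\lambda = u_\lambda - \lambda f_\lambda(u_\lambda) \to u$ in $L_s(m)$. Since $f_\lambda(u_\lambda)$ coincides, as a function, with $\overline{f}_\lambda(u_\lambda)$ --- the value at $u_\lambda$ of the Yosida regularization in $L_s(m)$ of the evaluation operator $\overline{f}$ of the maximal monotone graph $f$, resolvents of evaluation operators being pointwise --- and $\overline{f}$ is $m$-accretive on $L_s(m)$ with uniformly convex dual $L_{s'}(m)$, hence demiclosed (cf.\ {\S}\ref{ssec:m-accr}), the convergences $u_\lambda \to u$ strongly and $\overline{f}_\lambda(u_\lambda) \rightharpoonup g$ weakly in $L_s(m)$ force $(u,g) \in \overline{f}$, i.e.\ $g \in f(u)$ $m$-a.e.

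It then remains to pass to the limit in the mild form \eqref{eq:rm} of \eqref{eq:reg}, rewritten with the term $\int_0^{\cdot} S(\cdot-s)f_\lambda(u_\lambda(s))\,ds$ isolated on the left-hand side. The right-hand side converges strongly in $\H_p(L_q)$: the term $\eta\int_0^{\cdot} S(\cdot-s)u_\lambda\,ds$ by contractivity of $S$ and $u_\lambda \to u$ in $\H_p(L_q)$; the stochastic convolution $\int_0^{\cdot} S(\cdot-s)B(s,u_\lambda(s))\,dW(s)$ by Theorem \ref{thm:sconv} and the Lipschitz continuity of $B$ (which, with $(\mathrm{B}_{p,q})$, also guarantees stochastic integrability of $S(t-\cdot)B(\cdot,u)$); and $u_\lambda(\cdot) \to u(\cdot)$. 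Hence the left-hand side converges strongly in $\H_p(L_q)$, a fortiori weakly in $L_1(\Omega\times(0,T);L_q)$; but $h \mapsto \int_0^{\cdot} S(\cdot-s)h(s)\,ds$ is bounded linear on $L_1(\Omega\times(0,T);L_q)$, hence weak-to-weak continuous, so the left-hand side also converges weakly to $\int_0^{\cdot} S(\cdot-s)g(s)\,ds$. Identifying the two limits and rearranging gives \eqref{eq:mild} for a.e.\ $t$, hence for all $t$ by continuity of both sides; this, together with $g \in f(u)$ $m$-a.e.\ and $g \in L_1(L_q)$ a.s., shows that $u$ is a strict mild solution, and Lemma \ref{lm:smile} together with the path-continuity remarks above finishes the proof. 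The hard part will be the identification $g \in f(u)$: because $f$ is only monotone and possibly discontinuous, $f_\lambda(u_\lambda)$ need not converge pointwise, so one must marry the weak $L_1$-compactness (available precisely because $p^*/d > 1$) with the demiclosedness of $\overline{f}$; a subordinate difficulty --- handled by the rearrangement above --- is that weak convergence of the nonlinear term does not by itself permit passing to the limit in \eqref{eq:rm}.
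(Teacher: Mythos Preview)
Your proof is correct and follows essentially the same route as the paper's: Lemma~\ref{lm:Cauchy} for convergence of $(u_\lambda)$, weak compactness of $(f_\lambda(u_\lambda))$ in a reflexive $L_{p^*/d}$-type space combined with demiclosedness of the evaluation operator to identify $g\in f(u)$, weak continuity of the convolution to pass to the limit in \eqref{eq:rm}, and Lemma~\ref{lm:smile} for uniqueness and Lipschitz dependence. The differences are cosmetic---the paper works in the mixed-norm space $E=\L_{p^*/d}(L_s(L_q))$ rather than in $L_s(m)$, and obtains path continuity by inheritance from $(u_\lambda)$ rather than a posteriori from the mild form---and your explicit check that the weak limit $g$ is adapted actually fills a detail the paper leaves implicit.
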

\begin{proof}
  Since $d \geq 1$ implies $qd \geq 2d+q-2$ and
  $L_{qd} \embed L_{2d+q-2}$, it follows by Lemma \ref{lm:Cauchy} that
  $u_\lambda$ converges strongly to $u \in \H_p(L_q)$ as
  $\lambda \to 0$.  We are going to pass to the limit as
  $\lambda \to 0$ in mild form of equation \eqref{eq:reg}, i.e. in
  \eqref{eq:rm} above. Let us show that
  \[
  \int_0^t S(t-s)B(s,u_\lambda(s))\,dW(s) \xrightarrow{\lambda \to 0}
  \int_0^t S(t-s)B(s,u(s))\,dW(s)
  \]
  in probability for all $t \leq T$. In fact,\footnote{Note that here
    we are just using It\^o's isomorphism for the stochastic integral,
    not Burkholder's inequality, which does \emph{not} hold as the
    stochastic convolution is not, in general, a local martingale.}
  \begin{align*}
  &\E\norm[\bigg]{\int_0^t S(t-r)\bigl(B(u_\lambda(r)) -
    B(u(r))\,dW(r)\bigr)}_{L_q}^p\\
  &\hspace{5em} \lesssim \E\biggl( \int_0^t \norm[\big]{%
    S(t-r)\bigl(B(u_\lambda(r)) - B(u(r))}_{\gamma(H,L_q)}^2\,dr \biggr)^{p/2}\\
  &\hspace{5em} \lesssim_T \E\biggl( \int_0^t
    \norm[\big]{u_\lambda(r) - u(r)}^2_{L_q}\,dr \biggr)^{p/2}, 
  \end{align*}
  thanks to the ideal property of $\gamma(H,L_q)$, the contractivity
  of $S$, and the Lipschitz continuity of $B$. The last term tends to
  zero as $\lambda \to 0$ because $u_\lambda \to u$ in $\H_p(L_q)$
  and $\H_p(L_q) \embed \L_p(L_2(L_q))$.

  We can now consider the term in \eqref{eq:rm} involving
  $f_\lambda(u_\lambda)$. It follows from
  $\abs{f_\lambda} \leq \abs{f}$ and
  $\abs{f(x)} \lesssim 1 + \abs{x}^d$ for all $x \in \erre$ that, for
  any $s>1$,
  \[
  \norm[\big]{f_\lambda(u_\lambda)}_{\L_{p^*/d}(L_s(L_q))} \lesssim 
  1 + \norm[\big]{u_\lambda}^d_{\H_{p^*}(L_{qd})},
  \]
  so that $f_\lambda(u_\lambda)=f(J_\lambda u_\lambda)$ is bounded,
  hence weakly compact, in the reflexive Banach space
  $E:=\L_{p^*/d}(L_s(L_q))$ (recall that $p^*/d>1$ by assumption). In
  particular, there exists $g \in E$ and a subsequence of $\lambda$,
  denoted by the same symbol, such that $f(J_\lambda u_\lambda) \to g$
  weakly in $E$ as $\lambda \to 0$.  Since $J_\lambda u_\lambda \to u$
  strongly in $E$ as $\lambda \to 0$ and $f$, as an $m$-accretive
  operator on $E$, is also strongly-weakly closed thereon, we infer
  that $g \in f(u)$ $m$-a.e.. Since the linear operator
  \[
  \phi \mapsto \int_0^\cdot S(\cdot-s)\phi(s)\,ds
  \]
  is strongly (hence also weakly) continuous on $E$, we infer that
  \[
  \int_0^\cdot S(\cdot-s)f_\lambda(u_\lambda(s))\,ds \xrightarrow{\lambda \to 0}
  \int_0^\cdot S(\cdot-s) g(s)\,ds
  \]
  weakly in $E$, hence that
  \[
  u(t) = S(t)u_0 
  - \int_0^t S(t-s)\bigl(g(s) - \eta u(s)\bigr)\,ds
  + \int_0^t S(t-s)B(s,u(s))\,dW(s)
  \]
  for almost all $t \in [0,T]$. However, since $u$ admits a continuous
  $L_q$-valued modification, the identity must be satisfied for all
  $t \in [0,T]$. Existence is thus proved, and uniqueness as well as
  continuous dependence on the initial datum follow by the previous
  Lemma. The mild solution $u$, being a strong limit in $\H_p(L_q)$ of
  $(u_\lambda)$, inherits the path continuity of the latter.
\end{proof}

\subsection{A priori estimates}
As we have just seen, well-posedness in the strict mild sense in
$\H_p(L_q)$ for \eqref{eq:0} can be reduced to obtaining a priori
estimates for $(u_\lambda)$ in $\mathbb{H}_{p_1}(L_{q_1})$,
with $p_1>p$ and $q_1>q$ suitably chosen. 

\begin{prop}     \label{prop:apito}
  Let $p>0$ and $q \geq 2$. If $u_0 \in \L_p(\cF_0;L_q)$ and
  hypotheses $(\mathrm{A}_q)$, $(\mathrm{B}_{p,q})$ are satisfied,
  then there exists a constant $N$, independent of $\lambda$, such
  that
  \[
  \E\sup_{t \leq T} \norm[\big]{u_\lambda(t)}_{L_{q}}^p \leq N
  \Bigl( 1 + \E\norm[\big]{u_0}_{L_{q}}^p \Bigr).
  \]
\end{prop}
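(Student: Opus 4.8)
The plan is to follow the pattern of the proofs of Lemmas~\ref{lm:Cauchy} and~\ref{lm:smile}: apply It\^o's formula for the $q$-th power of the $L_q$-norm (Proposition~\ref{prop:gf}) to the \emph{exponentially rescaled} process $v_\lambda:=e^{-\alpha\cdot}u_\lambda$, and choose the weight $\alpha$ large enough to absorb every term that is linear in $\int_0^T\norm{v_\lambda}_{L_q}^q\,ds$, thereby bypassing a Gronwall argument (which, given the coupling between $p$ and $q$ and the mere $p/2$-integrability of $\norm{B(\cdot,0)}_{L_2(0,T;\gamma(H,L_q))}$, would be awkward when $p<q$). First, since $f_\lambda$ is Lipschitz continuous (on $\erre$, hence on $L_q$ as an evaluation operator) and $u_0\in\L_p(L_q)$, the well-posedness result for Lipschitz coefficients established at the beginning of this section gives a unique strict mild solution $u_\lambda\in\H_p(L_q)$ of \eqref{eq:reg}; in particular $u_\lambda\in L_\infty(L_q)$ $\P$-a.s., so that $f_\lambda(u_\lambda)\in L_\infty(L_q)$ and $B(\cdot,u_\lambda)\in L_2(0,T;\gamma(H,L_q))$ $\P$-a.s.\ (the latter using $(\mathrm{B}_{p,q})$), and Proposition~\ref{prop:gf} applies. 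For $\alpha>\eta$ the process $v_\lambda$ solves, in the mild sense, $dv_\lambda+Av_\lambda\,dt+\bigl((\alpha-\eta)v_\lambda+e^{-\alpha t}f_\lambda(u_\lambda)\bigr)\,dt=e^{-\alpha t}B(u_\lambda)\,dW$ with $v_\lambda(0)=u_0$, and Proposition~\ref{prop:gf}, together with $\Phi_q'(cx)=c^{q-1}\Phi_q'(x)$, yields pathwise
\[
\norm{v_\lambda(t)}_{L_q}^q+q(\alpha-\eta)\int_0^t\norm{v_\lambda}_{L_q}^q\,ds+\int_0^t e^{-q\alpha s}\Phi_q'(u_\lambda)f_\lambda(u_\lambda)\,ds\leq\norm{u_0}_{L_q}^q+M(t)+R(t),
\]
with $M(t):=\int_0^t e^{-\alpha s}\Phi_q'(v_\lambda)B(u_\lambda)\,dW$ and $R(t):=\tfrac12q(q-1)\int_0^t e^{-2\alpha s}\norm{B(u_\lambda)}_{\gamma(H,L_q)}^2\norm{v_\lambda}_{L_q}^{q-2}\,ds$.

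The only analytic input beyond routine estimates is the treatment of the drift term. Monotonicity of $f_\lambda$ gives $xf_\lambda(x)\geq xf_\lambda(0)\geq-\abs{f_\lambda(0)}\abs{x}$, while property~(c) of the Yosida approximation bounds $\abs{f_\lambda(0)}$ by $c_0:=\inf_{y\in f(0)}\abs{y}$, \emph{uniformly in $\lambda$}; hence, by H\"older's inequality on the bounded domain $D$ and $q\geq2$,
\[
\Phi_q'(u_\lambda)f_\lambda(u_\lambda)=q\int_D\abs{u_\lambda}^{q-2}u_\lambda f_\lambda(u_\lambda)\,dx\gtrsim-c_0\norm{u_\lambda}_{L_q}^{q-1},
\]
and, putting back the weight, $e^{-q\alpha s}\norm{u_\lambda}_{L_q}^{q-1}=e^{-\alpha s}\norm{v_\lambda}_{L_q}^{q-1}\leq\varepsilon\norm{v_\lambda}_{L_q}^q+N(\varepsilon)$ by Young's inequality with exponents $q/(q-1)$ and $q$. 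For $R$, the Lipschitz continuity of $B$ gives $\norm{B(u_\lambda)}^2_{\gamma(H,L_q)}\lesssim\norm{B(s,0)}^2_{\gamma(H,L_q)}+\norm{u_\lambda}_{L_q}^2$; since $e^{-2\alpha s}\norm{v_\lambda}_{L_q}^{q-2}\norm{u_\lambda}_{L_q}^2=\norm{v_\lambda}_{L_q}^q$ and $e^{-2\alpha s}\norm{v_\lambda}_{L_q}^{q-2}\leq\sup_{s\leq T}\norm{v_\lambda}_{L_q}^{q-2}$, a further application of Young (exponents $q/(q-2)$ and $q/2$, trivial if $q=2$) gives $R(t)\lesssim\int_0^T\norm{v_\lambda}_{L_q}^q\,ds+\varepsilon\sup_{s\leq T}\norm{v_\lambda}_{L_q}^q+N(\varepsilon)\bigl(\int_0^T\norm{B(s,0)}_{\gamma(H,L_q)}^2\,ds\bigr)^{q/2}$. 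Collecting the $\int_0^T\norm{v_\lambda}^q_{L_q}$ terms on the left, using that $A(t)+B(t)\leq C(t)$ implies $\sup_tA(t)+\sup_tB(t)\leq2\sup_tC(t)$, raising to the power $p/q$ and taking expectations, one reaches an inequality of the form
\[
\E\sup_{t\leq T}\norm{v_\lambda(t)}_{L_q}^p+\beta^{p/q}\E\Bigl(\int_0^T\norm{v_\lambda}_{L_q}^q\,ds\Bigr)^{p/q}\lesssim\E\norm{u_0}_{L_q}^p+1+\varepsilon C\,\E\sup_{t\leq T}\norm{v_\lambda(t)}_{L_q}^p+\E\sup_{t\leq T}\abs{M(t)}^{p/q},
\]
where $\beta=\beta(\alpha)\to\infty$ as $\alpha\to\infty$, $C$ is independent of $\alpha$ and $\lambda$, and the constant ``$1$'' absorbs $N(\varepsilon)\,\E(\int_0^T\norm{B(s,0)}^2_{\gamma(H,L_q)}\,ds)^{p/2}$, finite by $(\mathrm{B}_{p,q})$.

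Finally, Burkholder--Davis--Gundy, the ideal property and \eqref{eq:nJ} (so that $\norm{\Phi_q'(v_\lambda)B(u_\lambda)}_{\gamma(H,\erre)}\leq q\norm{v_\lambda}_{L_q}^{q-1}\norm{B(u_\lambda)}_{\gamma(H,L_q)}$), the same manipulations as for $R$, and H\"older in time ($\int_0^T\norm{v_\lambda}_{L_q}^2\,ds\leq T^{1-2/q}(\int_0^T\norm{v_\lambda}_{L_q}^q\,ds)^{2/q}$) bound $\E\sup_{t\leq T}\abs{M(t)}^{p/q}$ by $\E\bigl[\sup_{s\leq T}\norm{v_\lambda}_{L_q}^{p(q-1)/q}\bigl((\int_0^T\norm{B(s,0)}^2)^{p/(2q)}+(\int_0^T\norm{v_\lambda}_{L_q}^q)^{p/q^2}\bigr)\bigr]$, which two more applications of Young's inequality (exponents $q/(q-1)$ and $q$) dominate by $\varepsilon\,\E\sup_{s\leq T}\norm{v_\lambda}_{L_q}^p+N(\varepsilon)\,\E(\int_0^T\norm{B(s,0)}^2)^{p/2}+N(\varepsilon)\,\E(\int_0^T\norm{v_\lambda}_{L_q}^q\,ds)^{p/q}$. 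One then fixes $\varepsilon$ small enough to absorb the $\E\sup_t\norm{v_\lambda}^p_{L_q}$ contributions on the left (which fixes $N(\varepsilon)$), and afterwards chooses $\alpha$ large enough that $\beta^{p/q}>N(\varepsilon)$, absorbing the $\E(\int_0^T\norm{v_\lambda}^q_{L_q})^{p/q}$ contribution as well; since $\E(\int_0^T\norm{B(s,0)}^2_{\gamma(H,L_q)}\,ds)^{p/2}<\infty$, this leaves $\E\sup_{t\leq T}\norm{v_\lambda(t)}_{L_q}^p\lesssim1+\E\norm{u_0}_{L_q}^p$ with a constant independent of $\lambda$, and the claim follows from $\norm{u_\lambda(t)}_{L_q}\leq e^{\alpha T}\norm{v_\lambda(t)}_{L_q}$. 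I expect the main obstacle to be essentially organizational: arranging the weights and the Young exponents so that every cross-term is either a constant controlled by $(\mathrm{B}_{p,q})$, or absorbable into $\E\sup_t\norm{v_\lambda}_{L_q}^p$ (small $\varepsilon$) or into $\beta^{p/q}\,\E(\int_0^T\norm{v_\lambda}_{L_q}^q\,ds)^{p/q}$ (large $\alpha$, the constants $C$, $N(\varepsilon)$ being $\alpha$-independent); the single genuinely nonlinear point is that the uniform Yosida bound $\abs{f_\lambda(0)}\leq c_0$ supplies the only $\lambda$-uniformity needed from $f_\lambda$.
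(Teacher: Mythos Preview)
Your proposal is correct and follows essentially the same approach as the paper: exponential rescaling $v_\lambda=e^{-\alpha\cdot}u_\lambda$, Proposition~\ref{prop:gf}, the monotonicity bound $\Phi_q'(u_\lambda)f_\lambda(u_\lambda)\geq\Phi_q'(u_\lambda)f_\lambda(0)$ together with the $\lambda$-uniform estimate $\abs{f_\lambda(0)}\leq\inf_{y\in f(0)}\abs{y}$, a BDG/Young splitting of the martingale term, and the final ``small $\varepsilon$, then large $\alpha$'' absorption. The only cosmetic differences are that the paper uses the elementary inequalities $a^{q-1}\leq 1+a^q$ and $a^{q-2}\leq 1+a^q$ where you use Young, and that you keep track of the time dependence of $\norm{B(s,0)}_{\gamma(H,L_q)}$ more explicitly (the paper tacitly treats it as constant in $s$, which under $(\mathrm{B}_{p,q})$ is a slight abuse of notation that your version avoids).
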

\begin{proof}
  The proof uses arguments analogous to ones already seen, hence we
  omit some detail. As in previous proofs, we begin observing that the
  regularized equation \eqref{eq:reg} admits a unique $L_q$-valued
  solution $u_\lambda$, and, setting
  $v_\lambda(t):=e^{-\alpha t} u_\lambda(t)$ for all $t \geq 0$, with
  $\alpha>\eta$ a constant to be fixed later, Proposition
  \ref{prop:gf} implies
  \begin{align*}
    &\norm[\big]{v_\lambda(t)}_{L_q}^{q} 
      + q(\alpha-\eta) \int_0^t \norm{v_\lambda}_{L_q}^q\,ds
    + \int_0^t e^{-\alpha s} \Phi'_{q}(v_\lambda)
         f_\lambda(u_\lambda)\,ds\\
    &\hspace{5em}\leq \int_0^t e^{-\alpha s} \Phi'_{q}(v_\lambda)
    B(u_\lambda)\,dW\\
    &\hspace{5em}\quad + \frac12 q(q-1) \int_0^t
    \norm[\big]{e^{-\alpha s}B(u_\lambda)}_{\gamma(H,L_q)}^2
    \norm[\big]{v_\lambda}_{L_q}^{q-2}\,ds.
  \end{align*}
  We shall denote the stochastic integral in the previous inequality by $M$.
  By the homogeneity of order $q-1$ of $\Phi_q'$, the monotonicity of
  $f_\lambda$, the inequality $\abs{f_\lambda} \leq \abs{f}$, the
  identity $\norm{\Phi_q'(x)}_{L_{q'}} = q \norm{x}_{L_q}^{q-1}$, and
  the elementary inequality $a^{q-1} \leq 1 + a^q$ for all $a \geq 0$,
  we have
  \begin{align*}
  e^{-\alpha s} \Phi'_{q}(v_\lambda) f_\lambda(u_\lambda)
  &= e^{-q\alpha s} \Phi'_{q}(u_\lambda)
    \bigl(f_\lambda(u_\lambda) - f_\lambda(0) + f_\lambda(0)\bigr)\\
  &\geq e^{-q\alpha s} \Phi'_{q}(u_\lambda) f_\lambda(0)
  \geq - e^{-q\alpha s} \Phi'_{q}(u_\lambda) \abs{f(0)}\\
  &\geq -q e^{-q\alpha s} \abs{f(0)} \norm{u_\lambda}_{L_q}^{q-1}\\
  &\geq -q e^{-q\alpha s} \abs{f(0)}
   - q e^{-q\alpha s} \abs{f(0)} \norm{u_\lambda}_{L_q}^q,
  \end{align*}
  hence
  \[
  \int_0^t e^{-\alpha s} \Phi'_{q}(v_\lambda)
  f_\lambda(u_\lambda)\,ds \geq - \frac{\abs{f(0)}}{\alpha} (1-e^{-q\alpha t})
  - q\abs{f(0)} \int_0^t \norm[\big]{v_\lambda}_{L_q}^q\,ds.
  \]
  Similarly, by the triangle inequality and Lipschitz continuity of
  $B$,
  \[
  \norm[\big]{e^{-\alpha s}B(u_\lambda)}_{\gamma(H,L_q)}^2 \leq
  2\norm[\big]{B}_{\lip}^2 \norm[\big]{v_\lambda}_{L_q}^2
  + 2e^{-2\alpha s} \norm[\big]{B(0)}_{\gamma(H,L_q)}^2,
  \]
  hence, thanks to the elementary inequality $a^{q-2} \leq 1 + a^q$,
  $a \geq 0$,
  \begin{align*}
  &\frac12 q(q-1) \int_0^t
      \norm[\big]{e^{-\alpha s}B(u_\lambda)}_{\gamma(H,L_q)}^2
      \norm[\big]{v_\lambda}_{L_q}^{q-2}\,ds\\
  &\hspace{5em} \leq q(q-1) \norm[\big]{B}_{\lip}^2
      \int_0^t \bigl(1 + e^{-2\alpha s} \norm[\big]{B(0)}_{\gamma(H,L_q)}^2\bigr)
      \norm[\big]{v_\lambda}_{L_q}^q\,ds\\
  &\hspace{5em}\quad + \frac{q(q-1)}{2\alpha} \norm[\big]{B(0)}_{\gamma(H,L_q)}^2
  (1 - e^{-2\alpha t}).
  \end{align*}
  We can thus write
  \begin{align*}
    &\norm[\big]{v_\lambda(t)}_{L_q}^{q} 
      + q\bigl(\alpha-\eta-\abs{f(0)} - (q-1)\norm[\big]{B}_{\lip}^2\bigr)
      \int_0^t \norm{v_\lambda}_{L_q}^q\,ds\\
    &\hspace{5em}\leq \frac{\abs{f(0)}}{\alpha}
      + \frac{q(q-1)}{2\alpha} \norm[\big]{B(0)}_{\gamma(H,L_q)}^2 + M_T^*,
  \end{align*}
  and we choose the constant $\alpha$ larger than
  $\eta + \abs{f(0)} + (q-1)\norm{B}_{\lip}^2$. Raising to the power
  $p/q$, taking suprema, then expectation, and taking the power $1/p$,
  we are left with
  \[
  \norm[\big]{v_\lambda}_{\H_p(L_q)} + N_1
  \norm[\big]{v_\lambda}_{\L_p(L_q(L_q))} \lesssim N_2 +
  \norm[\big]{M_T^*}_{\L_{p/q}}^{1/q},
  \]
  where
  \begin{align*}
  N_1 &:= q^{1/q}\Bigl(\alpha-\eta-\abs{f(0)} -
  (q-1)\norm[\big]{B}_{\lip}^2\Bigr)^{1/q},\\
  N_2 &:= \Bigl( \frac{\abs{f(0)}}{\alpha} +
    \frac{q(q-1)}{2\alpha} \norm[\big]{B(0)}_{\gamma(H,L_q)}^2
  \Bigr)^{1/q},
  \end{align*}
  and, by an argument based on the Burkholder-Davis-Gundy inequality
  and norm interpolation, as in the proof of Lemma \ref{lm:smile},
  \begin{align*}
  \norm[\big]{M_T^*}_{\L_{p/q}}^{1/q} &\lesssim
  \norm[\big]{[M,M]_T^{1/2}}_{\L_{p/q}}^{1/q} = 
  \norm[\big]{[M,M]_T^\frac{1}{2q}}_{\L_p}\\
  &\leq \varepsilon \norm[\big]{B}_{}^{2/q} \norm[\big]{v_\lambda}_{\H_p(L_q)}
  + N_3(\varepsilon) \norm[\big]{v_\lambda}_{\L_p(L_q(L_q))}.
  \end{align*}
  Collecting estimates yields
  \begin{align*}
  &\norm[\big]{v_\lambda}_{\H_p(L_q)} + N_1(\alpha)
  \norm[\big]{v_\lambda}_{\L_p(L_q(L_q))}\\
  &\hspace{5em} \leq N_4\Bigl(N_2 +
  \varepsilon \norm[\big]{B}_{\lip}^{2/q} \norm[\big]{v_\lambda}_{\H_p(L_q)}
  + N_3(\varepsilon) \norm[\big]{v_\lambda}_{\L_p(L_q(L_q))} \Bigr),
  \end{align*}
  for a constant $N_4$ independent of $\alpha$ and $\varepsilon$. The
  proof is completed choosing first $\varepsilon$ small enough, and
  then $\alpha$ large enough.
\end{proof}

\subsection{Proof of Theorem \ref{thm:main0}}
Since $u_0 \in \L_{p^*}(L_{qd})$ and hypotheses $(\mathrm{A}_{qd})$,
$(\mathrm{B}_{p^*,qd})$ are satisfied, the regularized equation
\eqref{eq:reg} admits a unique $L_{qd}$-valued (strict) mild solution
$u_\lambda$ for all $\lambda>0$. By Proposition \ref{prop:apito} the
sequence $(u_\lambda)$ is bounded in $\H_{p^*}(L_{qd})$, hence
Proposition \ref{prop:cwp} allows us to conclude that \eqref{eq:0}
admits a unique strict mild solution $u \in \H_p(L_q)$ and that the
solution map $u_0 \mapsto u$ is Lipschitz continuous from $\L_p(L_q)$
to $\H_p(L_q)$.\hfill$\square$


\section{Generalized solutions}
\label{sec:gen}
In this section we prove Theorems \ref{thm:ga} and \ref{thm:gm}. The
main tool is the Lipschitz continuity of the map $(u_0,B) \mapsto u$
established in the next Lemma. For reasons of notational compactness,
we set $L_{r,\alpha}(0,T;X):=L_r([0,T],\mu;X)$, where $\mu$ is the
measure on $[0,T]$ with density $t \mapsto e^{-r \alpha t}$.
\begin{lemma}     \label{lm:ga}
  Assume that $p>0$ and $q \geq 2$. Let $u_1$, $u_2 \in \H_p(L_q)$ be
  strict mild solutions to
  \[
  du_1 + Au_1\,dt + f(u_1)\,dt = \eta u_1\,dt + B_1\,dW, \qquad
  u_1(0)=u_{01},
  \]
  and
  \[
  du_2 + Au_2\,dt + f(u_2)\,dt = \eta u_2\,dt + B_2\,dW, \qquad
  u_2(0)=u_{02},
  \]
  respectively, where $B_1$, $B_2 \in \L_p(L_2(0,T;\gamma(H,L_q)))$
  and $u_{01}$, $u_{02} \in \L_p(L_q)$. Then, for any $\alpha>\eta$,
  \begin{equation}\label{eq:ga}
  \norm[\big]{u_1-u_2}_{\H_{p,\alpha}(L_q)}
  \lesssim \norm[\big]{u_{01}-u_{02}}_{\L_p(L_q)} +
  \norm[\big]{B_1-B_2}_{\L_p(L_{2,\alpha}(0,T;\gamma(H,L_q)))}.
  \end{equation}
  In particular,
  \begin{equation}\label{eq:gaa}
  \norm[\big]{u_1-u_2}_{\H_{p}(L_q)} 
  \lesssim \norm[\big]{u_{01}-u_{02}}_{\L_p(L_q)} +
  \norm[\big]{B_1-B_2}_{\L_p(L_{2}(0,T;\gamma(H,L_q)))}.    
  \end{equation}
  Moreover, the same estimates hold for generalized solutions.
\end{lemma}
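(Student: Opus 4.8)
The plan is to run the argument of Lemma~\ref{lm:smile}, now retaining the weight $e^{-\alpha\cdot}$ throughout; the only structural change is that the noise difference $B_1-B_2$ is a \emph{given} process, not of the form $B(u_1)-B(u_2)$, so it cannot be reabsorbed by Lipschitz continuity and a large $\alpha$, and must instead be split off as a source term by Young's inequality.

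First I would observe that $u:=u_1-u_2$ is the (unique) mild solution to
\[
du + Au\,dt + \bigl(f(u_1)-f(u_2)\bigr)\,dt = \eta u\,dt + (B_1-B_2)\,dW,
\qquad u(0)=u_{01}-u_{02},
\]
and that, since $u_1,u_2$ are strict mild solutions in $\H_p(L_q)$, one has $u\in L_\infty(L_q)$, $\eta u-\bigl(f(u_1)-f(u_2)\bigr)\in L_1(L_q)$ and $B_1-B_2\in L_2(0,T;\gamma(H,L_q))$ $\P$-a.s., so that Proposition~\ref{prop:gf} applies to the rescaled process $v:=v_1-v_2=e^{-\alpha\cdot}u$, where $v_i:=e^{-\alpha\cdot}u_i$ and $\alpha>\eta$ is fixed. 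Exactly as in Lemma~\ref{lm:smile}, one drops on the left-hand side both the nonnegative term $q(\alpha-\eta)\int_0^t\norm{v}_{L_q}^q\,ds$ and the term $\int_0^t e^{-\alpha s}\Phi_q'(v)\bigl(f(u_1)-f(u_2)\bigr)\,ds$, which is nonnegative by monotonicity of $f$ and homogeneity of order $q-1$ of $\Phi_q'$, thus obtaining
\[
\norm{v(t)}_{L_q}^q \leq \norm{u_{01}-u_{02}}_{L_q}^q + M(t)
+ \tfrac12 q(q-1)\int_0^t e^{-2\alpha s}\norm{B_1-B_2}_{\gamma(H,L_q)}^2\,\norm{v(s)}_{L_q}^{q-2}\,ds,
\]
where $M(t):=\int_0^t e^{-\alpha s}\Phi_q'(v(s))\bigl(B_1(s)-B_2(s)\bigr)\,dW(s)$.

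Next I would take $\sup_{t\le T}$, pull $\norm{v(s)}_{L_q}^{q-2}$ out of the last integral as $V^{q-2}$ with $V:=\sup_{t\le T}\norm{v(t)}_{L_q}$, and use $\int_0^T e^{-2\alpha s}\norm{B_1-B_2}_{\gamma(H,L_q)}^2\,ds=\mathcal{B}^2$, where $\mathcal{B}:=\norm{B_1-B_2}_{L_{2,\alpha}(0,T;\gamma(H,L_q))}$; Young's inequality with exponents $q/(q-2)$ and $q/2$ (trivial when $q=2$) then gives $V^q\le\norm{u_{01}-u_{02}}_{L_q}^q+M_T^*+\varepsilon V^q+N(\varepsilon)\mathcal{B}^q$. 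Absorbing $\varepsilon V^q$, raising to the power $p/q$, and taking expectations yields $\E V^p\lesssim\E\norm{u_{01}-u_{02}}_{L_q}^p+\E(M_T^*)^{p/q}+\E\mathcal{B}^p$. For the martingale term, the ideal property of $\gamma$-Radonifying operators together with~\eqref{eq:nJ} gives $[M,M]_T\le q^2\,V^{2(q-1)}\mathcal{B}^2$, whence $\E(M_T^*)^{p/q}\lesssim\E\bigl(V^{2(q-1)}\mathcal{B}^2\bigr)^{p/(2q)}$ by Burkholder--Davis--Gundy, and a further Young inequality with exponents $q/(q-1)$ and $q$ bounds this by $\varepsilon'\E V^p+N(\varepsilon')\E\mathcal{B}^p$. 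Choosing $\varepsilon,\varepsilon'$ small enough to absorb $\E V^p$ into the left-hand side, and recalling that $(\E V^p)^{1/p}=\norm{u_1-u_2}_{\H_{p,\alpha}(L_q)}$ by definition, one obtains~\eqref{eq:ga}; inequality~\eqref{eq:gaa} then follows from the equivalence of the (quasi-)norms $\norm{\cdot}_{\H_{p,\alpha}(L_q)}$ and $\norm{\cdot}_{\H_p(L_q)}$ together with $\norm{\cdot}_{L_{2,\alpha}(0,T;\gamma(H,L_q))}\le\norm{\cdot}_{L_2(0,T;\gamma(H,L_q))}$.

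For generalized solutions, given two such solutions $u^{(1)},u^{(2)}$ with approximating sequences of strict mild solutions $u_n^{(j)}$ driven by data $(u_{0n}^{(j)},B_n^{(j)})$ converging to $(u_0^{(j)},B^{(j)})$ in $\L_p(L_q)\times\L_p(L_2(0,T;\gamma(H,L_q)))$, I would apply~\eqref{eq:ga} (resp.~\eqref{eq:gaa}) to $u_n^{(1)}-u_n^{(2)}$ and pass to the limit $n\to\infty$, using that $u_n^{(j)}\to u^{(j)}$ in $\H_p(L_q)$, hence in $\H_{p,\alpha}(L_q)$, and that $B_n^{(j)}\to B^{(j)}$ in $\L_p(L_2(0,T;\gamma(H,L_q)))$, hence in $\L_p(L_{2,\alpha}(0,T;\gamma(H,L_q)))$. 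I expect the only delicate points to be the verification that Proposition~\ref{prop:gf} applies to the difference equation --- which is exactly where the strict-mild hypothesis $f(u_i)\in L_1(L_q)$ enters --- and the bookkeeping of the $e^{-\alpha\cdot}$ weights, so that the right-hand side emerges as the weighted $L_2$-in-time norm of $B_1-B_2$ rather than an $L_q$-in-time one; this is precisely why $\norm{v(s)}_{L_q}^{q-2}$ is extracted as a supremum instead of being kept under the time integral.
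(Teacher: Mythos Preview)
Your argument is correct and follows essentially the same route as the paper: apply Proposition~\ref{prop:gf} to the weighted difference $v_1-v_2$, drop the $f$-term by monotonicity, split off $\norm{v}_{L_q}^{q-2}$ via Young's inequality with exponents $q/(q-2)$ and $q/2$ to produce the $L_{2,\alpha}$-norm of $B_1-B_2$, and handle the martingale term by BDG plus Young with exponents $q/(q-1)$ and $q$. The only cosmetic differences are that the paper carries the (ultimately unused) term $q(\alpha-\eta)\int_0^t\norm{v_1-v_2}_{L_q}^q\,ds$ on the left-hand side, and absorbs the $\varepsilon$-terms after taking expectations rather than pathwise, but neither changes the substance of the proof.
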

\begin{proof}
  The proof uses again arguments analogous to those used in the proof
  of Lemma \ref{lm:smile}, therefore some detail will be omitted.

  Setting $v_i(t) := e^{-\alpha t} u_i(t)$, $i=1,2$, for all
  $t \geq 0$, with $\alpha > \eta$, it follows by Proposition
  \ref{prop:gf} and monotonicity of $f$,
  \begin{align*}
  &\norm[\big]{v_1(t)-v_2(t)}_{L_{q}}^{q} 
  + q(\alpha-\eta) \int_0^t \norm{v_1-v_2}_{L_q}^q\,ds\\
  &\hspace{5em} \leq \norm[\big]{u_{01}-u_{02}}_{L_q}^q 
  + \int_0^t e^{-\alpha s} \Phi'_{q}(v_1 - v_2)
    \bigl(B_1 - B_2\bigr)\,dW\\
  &\hspace{5em}\quad + \frac12 q(q-1) \int_0^t
  \norm[\big]{e^{-\alpha s}\bigl(B_1-B_2\bigr)}_{\gamma(H,L_q)}^2
           \norm[\big]{v_1-v_2}_{L_q}^{q-2}\,ds,
  \end{align*}
  where, by Young's inequality with exponents $q/(q-2)$ and $q/2$, the
  last term is estimated by
  \begin{align*}
  &\sup_{s\leq t} \norm[\big]{v_1(s)-v_2(s)}_{L_q}^{q-2}
  \int_0^t e^{-2\alpha s} \norm[\big]{B_1-B_2}_{\gamma(H,L_q)}^2\,ds\\
  &\hspace{5em} \leq
  \varepsilon \sup_{s\leq t} \norm[\big]{v_1(s)-v_2(s)}_{L_q}^{q}
  + N(\varepsilon) \biggl(
    \int_0^t e^{-2\alpha s} \norm[\big]{B_1-B_2}_{\gamma(H,L_q)}^2\,ds\biggr)^{q/2}.
  \end{align*}
  Choosing $\varepsilon$ smaller than one, hence, as before, raising to the
  power $p/q$, taking suprema in time, then expectation, and finally
  power $1/p$, we get
  \begin{align*}
  &\norm[\big]{v_1-v_2}_{\H_p(L_{q})} 
  + (\alpha - \eta)^{1/q} \norm[\big]{v_1-v_2}_{\L_p(L_q(L_{q}))}\\
  &\hspace{5em} \lesssim
  \norm[\big]{u_{01}-u_{02}}_{\L_p(L_q)}
  + \norm[\big]{B_1-B_2}_{\L_p(L_{2,\alpha}(0,T;\gamma(H,L_q)))}
  + \norm[\big]{M_T^*}_{\L_{p/q}}^{1/q},
  \end{align*}
  where $M$ denotes the stochastic integral with respect to $W$ in the
  first inequality above. Applying the Burkholder-Davis-Gundy
  inequality, elementary estimates, and Young's inequality with
  exponents $q/(q-1)$ and $q$, we have
  \begin{align*}
  \norm[\big]{M_T^*}_{\L_{p/q}}^{1/q} &\lesssim 
  \norm[\big]{[M,M]_T^{\frac{1}{2q}}}_{\L_p}\\
  &= \norm[\bigg]{\biggl( \int_0^T \norm[\big]{v_1-v_2}_{L_q}^{2(q-1)}%
     \norm[\big]{B_1-B_2}_{\gamma(H,L_q)}^2 e^{-2\alpha s} \,ds
     \biggr)^{\frac{1}{2q}}}_{\L_p}\\
  &\lesssim \varepsilon
  \norm[\big]{v_1-v_2}_{\H_p(L_{q})} 
  + N(\varepsilon) \norm[\big]{B_1-B_2}_{\L_p(L_{2,\alpha}(0,T;\gamma(H,L_q)))}.
  \end{align*}
  Choosing $\varepsilon$ suitably small, the last two inequalities yield
  \[
  \norm[\big]{v_1-v_2}_{\H_p(L_{q})}
  \lesssim
  \norm[\big]{u_{01}-u_{02}}_{\L_p(L_q)}
  + \norm[\big]{B_1-B_2}_{\L_p(L_{2,\alpha}(0,T;\gamma(H,L_q)))},
  \]
  which establishes the claim because
  \begin{align*}
  \norm[\big]{u_1-u_2}_{\H_{p,\alpha}(L_{q})} &= 
  \norm[\big]{v_1-v_2}_{\H_p(L_{q})},\\
  \norm[\big]{u_1-u_2}_{\L_p(L_{q,\alpha}(L_{q}))} &=
  \norm[\big]{v_1-v_2}_{\L_p(L_q(L_{q}))}.
  \end{align*}
  and equivalence of the norms in $\H_{p,\alpha}(L_q)$, $\alpha \geq 0$.
  It is easily seen that estimates \eqref{eq:ga} and \eqref{eq:gaa}
  are stable with respect to passage to the limit, hence the remain
  true for generalized solutions.
\end{proof}

\begin{proof}[Proof of Theorem \ref{thm:ga}]
  Let $p_1 \geq p$ be such that
  \[
  p_1^* := \frac{p_1}{q}(2d+q-2) > d.
  \]
  Note that $d \geq 1$ implies $p_1^* \geq p_1 \geq p$, hence
  $\L_{p_1^*}(L_{qd})$ is dense in $\L_p(L_q)$, so that there
  exists a sequence
  \[
  (u_{0n})_{n\in\mathbb{N}} \subset \L_{p_1^*}(L_{qd})
  \]
  such that $u_{0n} \to u_0$ in $\L_p(L_q)$ as $n \to \infty$.
  Similarly, recalling the isomorphism $\gamma(H,L_q) \simeq L_q(H)$,
  there also exists a sequence
  \[
  (B_n)_{n\in\mathbb{N}} \subset \L_{p_1^*}(L_2(0,T;\gamma(H,L_{qd})))
  \]
  such that $B_n \to B$ in $\L_p(L_2(0,T;\gamma(H,L_{q})))$ as
  $n \to \infty$. Then, by Theorem \ref{thm:main0}, for each
  $n \in \mathbb{N}$ the equation
  \[
  du_n(t) + Au_n(t)\,dt + f(u_n(t))\,dt = \eta u(t)\,dt +
  B_n(t)\,dW(t), \qquad u_n(0)=u_{0n},
  \]
  admits a unique strict mild solution
  $u_n \in \H_{p_1}(L_q) \embed \H_p(L_q)$, and the previous lemma
  yields
  \[
  \norm[\big]{u_n-u_m}_{\H_p(L_q)} \lesssim 
  \norm[\big]{u_{0n} - u_{0m}}_{\L_p(L_q)} 
  + \norm[\big]{B_n-B_m}_{\L_p(L_2(0,T;\gamma(H,L_q)))},
  \]
  hence $(u_n)$ is a Cauchy sequence in $\H_p(L_q)$. This implies that
  its strong limit $u \in \H_p(L_q)$ is a generalized solution to
  \eqref{eq:add}. Uniqueness and Lipschitz dependence on the initial
  datum follow immediately by \eqref{eq:ga}.
\end{proof}

\begin{proof}[Proof of Theorem \ref{thm:gm}]
  Let $w_1,w_2 \in \H_p(L_q)$ and consider the equation, for $i=1,2$,
  \[
  du_i(t) + Au_i(t)\,dt + f(u_i(t))\,dt = 
  \eta u_i(t)\,dt + B(w_i(t))\,dW(t), \qquad u_i(0)=u_{0i}.
  \]
  The assumptions on $B$ immediately imply that
  $B(w) \in \L_p(L_2(0,T;\gamma(H,L_q)))$ for all $w \in \H_p(L_q)$,
  hence the previous equation admits a unique generalized solution
  $u_i \in \H_p(L_q)$ by Theorem \ref{thm:ga}. In particular, the
  domain of the map $\Gamma$ is the whole $\H_p(L_q)$ and its image is
  contained in $\H_p(L_q)$. We are now going to show that $\Gamma$ is
  a contraction in $\H_{p,\alpha}(L_q)$ for small $T$. In fact,
  inequality \eqref{eq:ga} yields
  \[
  \norm[\big]{u_1-u_2}_{\H_{p,\alpha}(L_q)}
  \lesssim
  \norm[\big]{u_{01}-u_{02}}_{\L_p(L_q)} +
  \norm[\big]{B(w_1)-B(w_2)}_{\L_p(L_{2,\alpha}(0,T;\gamma(H,L_q)))},
  \]
  where, by the Lipschitz continuity of $B$,
  \begin{align*}
  &\norm[\big]{B(w_1)-B(w_2)}^p_{\L_p(L_{2,\alpha}(0,T;\gamma(H,L_q)))}\\
  &\hspace{5em} = \E\biggl(
  \int_0^T e^{-2\alpha s} \norm[\big]{B(w_1)-B(w_2)}^2_{\gamma(H,L_q)}\,ds
     \biggr)^{p/2}\\
  &\hspace{5em} \leq \norm[\big]{B}_{\lip}^p
  \E\biggl( \int_0^T e^{-2\alpha s} \norm[\big]{w_1 - w_2}^2_{L_q}\,ds
  \biggr)^{p/2}\\
  &\hspace{5em} \leq \norm[\big]{B}_{\lip}^p T^{p/2}
  \norm[\big]{w_1 - w_2}^p_{\H_{p,\alpha}(L_q)}.
  \end{align*}
  This implies that $\Gamma$ is a contraction on $\H_{p,\alpha}(L_q)$
  for $T$ small enough, hence that a unique generalized solution
  exists that depends Lipschitz continuously on the initial datum.  By
  a classical patching procedure, the result can be extended to
  arbitrary finite $T$.
\end{proof}


\section{Mild solutions}
\label{sec:mild}
In this section we prove Theorem \ref{thm:mild}. The proof is split
into two parts: first we prove existence, showing that one can pass to
the limit in the term of \eqref{eq:rm} containing
$f_\lambda(u_\lambda)$ in the weak topology of $\L_1(L_1(L_1))$. Then
we prove uniqueness, as a consequence of continuous dependence on the
initial datum, via an extension of Proposition \ref{prop:gf}. We
proceed this way because, as will be apparent soon, the symmetry
condition on $F$ and the ``regularizing'' assumptions on $A$ are needed
only to prove uniqueness.

\subsection{Existence}
We shall use the following weak convergence criterion (see
\cite[Theorem~18]{Bre-mm}).
\begin{lemma}
\label{lm:Brezis}
  Let $(Y,\mathcal{A},\mu)$ be a finite measure space. Assume that
  $\gamma$ is a maximal monotone graph in $\erre \times \erre$ with
  $\dom(\gamma)=\erre$ and $0 \in \gamma(0)$. If the sequences of functions
  $(z_n)$, $(y_n) \subset L_0(Y,\mathcal{A},\mu)$ indexed by
  $n\in\enne$ are such that $\lim_{n\to\infty} y_n = y$ $\mu$-a.e.,
  $z_n \in \gamma(y_n)$ $\mu$-a.e. for all $n$, and there exists a constant
  $N$ such that
  \[
  \int_Y z_ny_n\,d\mu < N \qquad \forall n \in \enne,
  \]
  then there exist $z \in L_1(Y,\mu)$ and a subsequence $(n_k)_k$
  such that $z_{n_k} \to z$ weakly in $L_1(Y,\mu)$ as $k \to \infty$
  and $z \in \gamma(y)$ $\mu$-a.e.
\end{lemma}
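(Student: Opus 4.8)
This statement is classical — it is \cite[Theorem~18]{Bre-mm} — and the argument I would give proceeds in two stages: first extract a weakly convergent subsequence of $(z_n)$ in $L_1$, then identify its limit as a selection of $\gamma(y)$. For the first stage, let $F$ be the potential of $\gamma$ (so $\gamma=\partial F$), normalized by $F(0)=0$; since $\dom(\gamma)=\erre$ the function $F$ is everywhere finite, so its conjugate $F^*$ is superlinear at infinity, and since $0\in\gamma(0)=\partial F(0)$ we have $F\geq0$, hence also $F^*\geq0$. Because $z_n\in\partial F(y_n)$ $\mu$-a.e., the equality case of the Fenchel--Young inequality gives $z_ny_n=F(y_n)+F^*(z_n)\geq0$ pointwise, so the hypothesis yields $\int_Y F^*(z_n)\,d\mu\leq\int_Y z_ny_n\,d\mu<N$ for all $n$. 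Superlinearity of $F^*$ then gives the de la Vall\'ee--Poussin estimate: for every $K>0$ pick $M$ with $F^*(x)\geq K\abs{x}$ whenever $\abs{x}\geq M$, so that $\int_E\abs{z_n}\,d\mu\leq M\mu(E)+K^{-1}N$ for every measurable $E\subseteq Y$; taking $E=Y$ shows $(z_n)$ is bounded in $L_1(Y,\mu)$, and letting $\mu(E)\to0$ (with $K$ large) shows $(z_n)$ is equi-integrable. Since $\mu$ is finite, the Dunford--Pettis theorem then provides a subsequence $(z_{n_k})$ and a function $z\in L_1(Y,\mu)$ with $z_{n_k}\to z$ weakly in $L_1(Y,\mu)$.

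For the identification $z\in\gamma(y)$ $\mu$-a.e., I would use Minty's monotonicity trick after a localization. Fix $(a,b)\in\gamma$; monotonicity of $\gamma$ gives $(z_n-b)(y_n-a)\geq0$ $\mu$-a.e.\ for every $n$. Given $\delta>0$, Egorov's theorem together with the $\mu$-a.e.\ finiteness of $y$ and finiteness of $\mu$ produces a set $Y_\delta\subseteq Y$ with $\mu(Y\setminus Y_\delta)<\delta$ on which $y_n\to y$ uniformly and $y\in L_\infty(Y_\delta)$. For any measurable $E\subseteq Y_\delta$ I then pass to the limit in $\int_E(z_{n_k}-b)(y_{n_k}-a)\,d\mu\geq0$: the term $\int_E b(y_{n_k}-a)\,d\mu$ converges by the uniform convergence of $y_{n_k}$ on $Y_\delta$, while writing $\int_E z_{n_k}(y_{n_k}-a)\,d\mu=\int_E z_{n_k}(y-a)\,d\mu+\int_E z_{n_k}(y_{n_k}-y)\,d\mu$, the first summand converges to $\int_E z(y-a)\,d\mu$ by weak $L_1$-convergence tested against the bounded function $(y-a)\,\mathbf 1_E$, and the second is bounded by $\norm{y_{n_k}-y}_{L_\infty(Y_\delta)}\norm{z_{n_k}}_{L_1}\to0$. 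Hence $\int_E(z-b)(y-a)\,d\mu\geq0$ for every measurable $E\subseteq Y_\delta$, so $(z-b)(y-a)\geq0$ $\mu$-a.e.\ on $Y_\delta$, and, letting $\delta\downarrow0$, $\mu$-a.e.\ on $Y$. Running this over a countable dense subset of the (closed) graph of $\gamma$ and using that monotonicity inequalities survive limits, one obtains that for $\mu$-a.e.\ $\xi$ the set $\gamma\cup\{(y(\xi),z(\xi))\}$ is monotone; by maximality of $\gamma$ this forces $(y(\xi),z(\xi))\in\gamma$, i.e.\ $z\in\gamma(y)$ $\mu$-a.e.

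The delicate point is the identification: $(z_n)$ converges only weakly, which interacts badly with pointwise manipulations, while $(y_n)$ converges a.e.\ but need not be bounded in $L_\infty$ (the bound $\int_Y F(y_n)\,d\mu<N$ only gives integrability of $(y_n)$, not equi-integrability, since $F$ may grow merely linearly). The Egorov localization is precisely what bridges this gap, by reducing the passage to the limit to a set on which $y_n$ converges uniformly and is uniformly bounded; all the remaining ingredients — the Fenchel--Young identity, de la Vall\'ee--Poussin, Dunford--Pettis, and Minty's lemma — are routine.
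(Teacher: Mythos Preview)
Your argument is correct and follows exactly the approach the paper sketches: the Fenchel--Young identity combined with superlinearity of $F^*$, de la Vall\'ee--Poussin, and Dunford--Pettis for the weak compactness, followed by a monotonicity argument for the identification. The paper defers the identification step to \cite{Bre-mm}, while you spell it out via Egorov localization and Minty's device; this is the standard way to fill in that ``short argument based on monotonicity'' the paper alludes to.
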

\begin{proof}[Sketch of proof]
  The weak compactness in $L_1(Y,\mu)$ of $(z_n)$ is a consequence of
  $z_ny_n = G(y_n) + G^*(z_n)$, where $G$ is a convex function with
  $G(0)=0$ such that $\gamma=\partial G$. In fact,
  $\dom(\gamma)=\erre$ implies that $G^*$ is superlinear at infinity
  (see~{\S}\ref{ssec:conv}), which in turn implies, by the criterion
  of de la Vall\'ee-Poussin (see e.g.~\cite[Theorem~4.5.9]{Bog-MT1}),
  that $(z_n)$ is uniformly integrable in $L_1(Y,\mu)$, hence, by the
  Dunford-Pettis theorem, it is relatively weakly compact thereon (see
  e.g.~\cite[Corollary~4.7.19]{Bog-MT1}). The fact that
  $z\in \gamma(y)$ $\mu$-a.e. requires a further (short) argument
  based on monotonicity (see~\cite{Bre-mm} for details).
\end{proof}

\smallskip

Let us also recall some further notation. For $q \geq 2$, define the
homeomorphism $\phi_q$ of $\erre$ and the maximal monotone graph
$\tilde{f}$ in $\erre \times \erre$ as
\[
\phi_q: x \mapsto x \abs{x}^{q-2} \equiv \abs{x}^{q-1} \operatorname{sgn} x,
\qquad
\tilde{f} := f \circ \phi_q^{-1}.
\]
Since $0 \in \tilde{f}(0)$, there exists a convex function
$\tilde{F}:\erre \to \erre$ with $\tilde{F}(0)=0$ such that
$\partial \tilde{F}=\tilde{f}$. As usual, we shall denote by
$\tilde{F}^*$ the convex conjugate of $\tilde{F}$. We recall that
$\tilde{F}^*$ is convex and superlinear at infinity, because
$\tilde{f}$ is finite on the whole real line (see~{\S}\ref{ssec:conv}).

In the next statement $g$ stands for the process defined in Definition
\ref{def:mild}.
\begin{prop}
\label{prop:mild}
  Let $p \geq q \geq 2$ and $0 \in f(0)$. Assume that
  \begin{itemize}
  \item[\emph{(a)}] $u_0 \in \L_p(L_q)$;
  \item[\emph{(b)}] hypothesis $(\mathrm{A}_s)$ is satisfied for
    $s \in \{1, q, qd\}$;
  \item[\emph{(c)}] hypothesis $(\mathrm{B}_{p,q})$ is satisfied.
  \end{itemize}
  Then there exists a mild solution $u \in \H_p(L_q)$ to
  \eqref{eq:0}. Moreover, $u$ has continuous paths and satisfies
  $\hat{F}(u)$, $\tilde{F}^*(g) \in \L_1(L_1(L_1))$.
\end{prop}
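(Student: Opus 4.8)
The plan is to obtain the mild solution as a strong $\H_p(L_q)$-limit of strict mild solutions with regular data, the one genuinely new ingredient being a uniform bound that forces weak compactness of the approximating drifts in $\L_1(L_1(L_1))$. Since $p\geq q\geq 2$ one has $p^*:=p(2d+q-2)/q\geq 2d+q-2>d$. By Theorem~\ref{thm:gm}, \eqref{eq:0} has a unique generalized solution $u\in\H_p(L_q)$, which by definition is the generalized solution of the equation with additive noise
\[
dw(t)+Aw(t)\,dt+f(w(t))\,dt=\eta w(t)\,dt+B(u(t))\,dW(t),\qquad w(0)=u_0 ,
\]
whose forcing lies in $\L_p(L_2(0,T;\gamma(H,L_q)))$ by $(\mathrm{B}_{p,q})$; hence, by the construction in the proof of Theorem~\ref{thm:ga}, $u=\lim_n w_n$ in $\H_p(L_q)$, where the $w_n$ are the (unique) strict mild solutions, with continuous paths, to that equation with regularized data $u_{0n}\to u_0$ in $\L_p(L_q)$, $u_{0n}\in\L_{p^*}(L_{qd})$, and additive $B_n\to B(u)$ in $\L_p(L_2(0,T;\gamma(H,L_q)))$, $B_n\in\L_{p^*}(L_2(0,T;\gamma(H,L_{qd})))$. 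Let $g_n\in f(w_n)$, $g_n\in L_1(L_q)$ a.s., be the associated selections; the goal is to prove that $u$ is a mild solution with $g:=\lim_n g_n$.

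The heart of the matter is the a priori estimate. Applying Proposition~\ref{prop:gf} to $w_n$, written as the linear equation $dw_n+Aw_n=(\eta w_n-g_n)\,dt+B_n\,dW$ (admissible, since $\eta w_n-g_n\in L_1(L_q)$ a.s., $B_n\in L_2(0,T;\gamma(H,L_q))$ a.s.\ and $w_n\in L_\infty(L_q)$ a.s.), the crucial point is a lower bound for $\Phi_q'(w_n)g_n=q\int_D\abs{w_n}^{q-2}w_n g_n\,dx$: putting $y_n:=\phi_q(w_n)$, one has $g_n\in f(w_n)=\tilde{f}(y_n)=\partial\tilde{F}(y_n)$, so the Fenchel equality yields
\[
\abs{w_n}^{q-2}w_n g_n=y_n g_n=\tilde{F}(y_n)+\tilde{F}^*(g_n)=\hat{F}(w_n)+\tilde{F}^*(g_n)\geq 0 .
\]
Taking expectations (a routine localization removes the local martingale term), using accretivity of $A$, the identity $\Phi_q'(w_n)w_n=q\norm{w_n}_{L_q}^q$, and estimating the trace term by H\"older's inequality in $\Omega$ as in the proof of Proposition~\ref{prop:apito}, one arrives at
\[
q\,\E\int_0^T\int_D\bigl(\hat{F}(w_n)+\tilde{F}^*(g_n)\bigr)\,dx\,dt
\lesssim\norm{u_{0n}}_{\L_q(L_q)}^q+\norm{w_n}_{\H_q(L_q)}^q
+\norm{w_n}_{\H_q(L_q)}^{q-2}\norm{B_n}_{\L_q(L_2(0,T;\gamma(H,L_q)))}^2 .
\]
Here the hypothesis $p\geq q$ enters decisively: on the probability space $\H_p(L_q)\embed\H_q(L_q)$ and $\L_p\embed\L_q$, so the right-hand side is bounded uniformly in $n$ (the convergent sequences $u_{0n}$, $w_n$, $B_n$ are bounded in the relevant spaces). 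Thus $\sup_n\E\int_0^T\int_D(\hat{F}(w_n)+\tilde{F}^*(g_n))\,dx\,dt<\infty$.

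Since $\tilde{F}$ is finite on $\erre$ (because $\dom(\tilde{f})=\erre$), $\tilde{F}^*$ is nonnegative and superlinear at infinity, and $\hat{F}$ is nonnegative and continuous. Passing to a subsequence, $w_n\to u$ $m$-a.e.\ (using $\H_p(L_q)\embed\L_1(L_1(L_1))$); then Lemma~\ref{lm:Brezis} applies with the maximal monotone graph $f$ (note $0\in f(0)$ and $\dom(f)=\erre$), its hypothesis $\int g_n w_n\,dm<N$ being met because, by monotonicity of $f$ together with $0\in f(0)$, $g_n w_n=\abs{g_n}\abs{w_n}\geq 0$, while $\abs{g_n}\lesssim 1+\abs{w_n}^d$ and $\abs{w_n}\leq\abs{w_n}^{q-1}$ on $\{\abs{w_n}\geq 1\}$ (here $q\geq 2$ is used), so that $g_n w_n\leq N+\abs{w_n}^{q-1}\abs{g_n}=N+\hat{F}(w_n)+\tilde{F}^*(g_n)$, which is bounded in $L_1(\Omega\times[0,T]\times D)$ by the previous step. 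Hence, along a further subsequence, $g_n\to g$ weakly in $\L_1(L_1(L_1))$ with $g\in f(u)$ $m$-a.e., so $g$ is an admissible selection in the sense of Definition~\ref{def:mild}. Moreover $\hat{F}(u)\in\L_1(L_1(L_1))$ by Fatou's lemma, and $\tilde{F}^*(g)\in\L_1(L_1(L_1))$ by weak lower semicontinuity on $L_1$ of $z\mapsto\int\tilde{F}^*(z)\,dm$.

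It remains to pass to the limit in the mild form
\[
w_n(t)+\int_0^t S(t-s)\bigl(g_n(s)-\eta w_n(s)\bigr)\,ds=S(t)u_{0n}+\int_0^t S(t-s)B_n(s)\,dW(s) .
\]
Along the subsequence: $w_n(t)\to u(t)$ and $\int_0^t S(t-s)w_n(s)\,ds\to\int_0^t S(t-s)u(s)\,ds$ strongly in $L_q$, in probability (contractivity of $S$, $w_n\to u$ in $\H_p(L_q)$); $S(t)u_{0n}\to S(t)u_0$ likewise; $\int_0^t S(t-s)B_n(s)\,dW\to\int_0^t S(t-s)B(s,u(s))\,dW$ in probability (It\^o's isometry, contractivity of $S$, $B_n\to B(u)$ in $\L_p(L_2(0,T;\gamma(H,L_q)))$); and $\int_0^t S(t-s)g_n(s)\,ds\to\int_0^t S(t-s)g(s)\,ds$ weakly in $L_1$, because $\phi\mapsto\int_0^t S(t-s)\phi(s)\,ds$ is bounded from $L_1(\Omega\times[0,T]\times D)$ into $L_1(\Omega\times D)$ — the only place hypothesis $(\mathrm{A}_1)$ is used — hence weakly continuous. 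Combining, \eqref{eq:mild} holds in $L_1(D)$ for each fixed $t$, a.s.; since every other term there is $L_q$-valued and continuous in $t$ (the stochastic convolution by Theorem~\ref{thm:sconv}), the identity extends to all $t\in[0,T]$, a.s. As $u\in\H_p(L_q)$ has continuous paths (being the $\H_p(L_q)$-limit of the continuous $w_n$) and $S(t-\cdot)B(\cdot,u)$ is stochastically integrable, $u$ is the desired mild solution. The principal difficulty is the a priori estimate: one must see that It\^o's formula for $\norm{\cdot}_{L_q}^q$ couples with the convex pair $(\hat{F},\tilde{F}^*)$ through $\tilde{f}=f\circ\phi_q^{-1}$ and the Fenchel identity, and that its right-hand side is uniformly controlled precisely when $p\geq q$; the weak-$L_1$ compactness (via superlinearity of $\tilde{F}^*$), Br\'ezis's lemma, and the passage to the limit are then routine.
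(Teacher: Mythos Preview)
Your proof is correct and follows the same overall strategy as the paper: approximate the generalized solution by strict mild solutions, derive a uniform bound on $\int_\Xi g_n\phi_q(w_n)\,dm$ via the It\^o-type estimate of Proposition~\ref{prop:gf} (this is where $p\geq q$ enters), invoke Lemma~\ref{lm:Brezis} for weak $L_1$-compactness of $(g_n)$, and pass to the limit using $(\mathrm{A}_1)$.

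There are a few differences in execution worth recording. First, your approximation scheme is simpler than the paper's: you observe that $u=\Gamma(u)$ is already the generalized solution of an additive-noise equation, so the construction in Theorem~\ref{thm:ga} applies directly; the paper instead goes through the fixed-point iterates $u_n$ of $\Gamma$ and then regularizes each, extracting a diagonal sequence $\bar{u}_n=u_n^{\nu(n)}$. Second, for the a~priori estimate you take expectations directly (after localization) and bound the right-hand side via H\"older in $\Omega$ and the embedding $\H_p\embed\H_q$; the paper instead raises to the power $p/q$, controls the martingale term by the Burkholder--Davis--Gundy inequality, and only at the end uses $p/q\geq 1$ via Jensen's inequality. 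Both routes exploit $p\geq q$ in an essential way. Third, you apply Lemma~\ref{lm:Brezis} with the graph $f$ itself and check $\int g_n w_n\,dm<N$ by splitting into $\{|w_n|\gtrless 1\}$; the paper applies it with $\tilde{f}=f\circ\phi_q^{-1}$, for which the required bound $\int g_n\phi_q(w_n)\,dm<N$ is exactly the Fenchel identity $\hat{F}(w_n)+\tilde{F}^*(g_n)$ and no splitting is needed. None of these differences is substantive.
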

\begin{proof}
  We proceed in several steps. 
  \smallskip\par\noindent
  \noindent\textsc{Step 1.} We begin showing that the
  generalized solution to \eqref{eq:0}, which exists and is unique
  thanks to Theorem \ref{thm:gm}, can be approximated by strict mild
  solutions to suitable equations. Let $u$ be the generalized solution
  to \eqref{eq:0} and $\delta>0$. Then there exists $n_0 \in \enne$
  such that $\norm{u-u_n}_{\H_p(L_q)} < \delta$ for all $n > n_0$,
  where $u_n$ is the unique generalized solution to
  \[
  du_n + Au_n\,dt + f(u_n)\,dt = \eta u_n\,dt + B(u_{n-1})\,dW, \qquad
  u_n(0)=u_0.
  \]
  In turn, for any $n>n_0$, there exists $\nu=\nu(n)$ and $u_{0\nu}$,
  $\bigl[B(u_{n-1})\bigr]_\nu$ such that
  \[
  \norm{u_n-u^\nu_n}_{\H_p(L_q)} \lesssim \norm{u_0-u_{0\nu}}_{\L_p(L_q)}
  + \norm[\big]{[B(u_{n-1})]_\nu - B(u_{n-1})}_{\L_p(L_2(0,T;\gamma(H,L_q)))}
  < \delta,
  \]
  where $u_n^\nu$ is the unique strict mild solution to
  \[
  du_n^\nu + Au_n^\nu\,dt + f(u_n^\nu)\,dt = \eta u_n^\nu\,dt +
  \bigl[B(u_{n-1})\bigr]_\nu\,dW,
  \qquad u^\nu_n(0) = u_{0\nu}.
  \]
  In particular, by the triangle inequality, one can construct a
  sequence of strict mild solutions $\bar{u}_n:=u_n^{\nu(n)}$ such that
  $\norm{u-\bar{u}_n}_{\H_p(L_q)}$ is less than (a constant times)
  $\delta$, i.e., being $\delta$ arbitrary, the sequence
  $\bar{u}_n:=u_n^{\nu(n)}$ converges to $u$ in $\H_p(L_q)$.
  \smallskip\par\noindent
  \textsc{Step 2.}\label{lm:apfel} We are now going to show that there exists a
  constant $N$, independent of $n$, such that
  \[
  \E\int_0^T \Phi'_q(\bar{u}_n(s)) \bar{g}_n(s)\,ds < N,
  \]
  where $\bar{g}_n \in f(\bar{u}_n)$ $m$-a.e. is the selection of
  $f(\bar{u}_n)$ appearing in the definition of strict mild solution.
  Let $\bar{B}_n:=[B(u_{n-1})]_{\nu(n)}$ and
  $v_n(t) := e^{-\alpha t} \bar{u}_n(t)$ for all $t \in [0,T]$, with
  $\alpha>\eta$ a constant. Proposition \ref{prop:gf} and obvious
  estimates yield
  \begin{align*}
  \norm[\big]{v_n(t)}_{L_\infty(0,T;L_q)}^q 
  &+ q(\alpha-\eta) \int_0^T \norm{v_n(s)}_{L_q}^q\,ds
  + \int_0^T e^{-\alpha s} \Phi'_q(v_n(s)) \bar{g}_n(s)\,ds\\
  &\lesssim \norm[\big]{u_{0n}}_{L_q}^q 
  + \sup_{t \leq T} \abs[\bigg]{%
    \int_0^t e^{-\alpha s} \Phi'_{q}(v_n(s)) \bar{B}_n(s)\,dW}\\
  &\quad + \frac12 q(q-1) \int_0^T
  \norm[\big]{e^{-\alpha s} \bar{B}_n(s)}_{\gamma(H,L_q)}^2
           \norm[\big]{v_n(s)}_{L_q}^{q-2}\,ds,
  \end{align*}
  where $M$, as before, stands for the stochastic integral above.  By
  Young's inequality with exponents $q/(q-2)$ and $q/2$, the
  last term can be estimated by
  \[
  \varepsilon \norm[\big]{v_n}_{L_\infty(0,T;L_q)}^q
  + N(\varepsilon) \norm[\big]{\bar{B}_n}^q_{L_2(0,T;\gamma(H,L_q))},
  \]
  so that, choosing $\varepsilon$ small enough, raising to the power
  $p/q$, and taking expectation, we obtain
  \begin{multline*}
  \norm[\big]{v_n}^p_{\H_p(L_q)} 
  + \E\biggl( \int_0^T e^{-\alpha s} \Phi'_q(v_n(s)) \bar{g}_n(s)\,ds
      \biggr)^{p/q}\\
  \lesssim \norm[\big]{u_{0n}}_{\L_p(L_q)}^p + \E(M_T^*)^{p/q}
  + \norm[\big]{\bar{B}_n}^p_{\L_p(L_2(0,T;\gamma(H,L_q)))}.
  \end{multline*}
  By an argument already used before, based on the
  Burkholder-Davis-Gundy inequality and Young's inequality with
  exponents $q/(q-1)$ and $q$, we have
  \[
  \E(M_T^*)^{p/q} \lesssim \varepsilon \norm[\big]{v_n}^p_{\H_p(L_q)}
  + N(\varepsilon) \norm[\big]{\bar{B}_n}^p_{\L_p(L_2(0,T;\gamma(H,L_q)))}
  \]
  thus also, choosing $\varepsilon$ small enough,
  \[
  \E\biggl( \int_0^T e^{-\alpha s} \Phi'_q(v_n(s)) \bar{g}_n(s)\,ds 
    \biggr)^{p/q}
  \lesssim \norm[\big]{u_{0n}}_{\L_p(L_q)}^p
  + \norm[\big]{\bar{B}_n}^p_{\L_p(L_2(0,T;\gamma(H,L_q)))},
  \]
  where the first term on the right-hand side is bounded because, by
  definition of generalized solution, $u_{0n}$ converges to $u_0$ in
  $\L_p(L_q)$. Moreover, denoting the norm of 
  $\L_p(L_2(0,T;\gamma(H,L_q)))$ by $\norm{\cdot}$ for
  simplicity,
  \[
  \norm[\big]{\bar{B}_n - B(u_{n-1})} = 
  \norm[\big]{[B(u_{n-1})]_{\nu(n)} - B(u_{n-1})}< \delta/2, 
  \]
  hence $\norm{\bar{B}_n} < \norm{B(u_{n-1})} + \delta/2$, where, by
  the Lipschitz continuity of $B$,
  \begin{align*}
  \norm{B(u_{n-1})} &\leq \norm{B(u_{n-1}) - B(0)} + \norm{B(0)}\\
  &\lesssim \norm{u_{n-1}}_{\H_p(L_q)} + \norm{B(0)}.
  \end{align*}
  Since $u_n \to u$ in $\H_p(L_q)$ as $n \to \infty$, it follows that
  $\norm[\big]{u_n}_{\H_p(L_q)}$ is bounded, which in turn implies
  that $\norm[\big]{\bar{B}_n}$ is also bounded.  We conclude that
  there exists a constant $N$, independent of $n$, such that
  \[
  \E\biggl( \int_0^T e^{-\alpha s} \Phi'_q(v_n(s)) \bar{g}_n(s)\,ds 
    \biggr)^{p/q} < N.
  \]
  Since $p/q \geq 1$, it follows by Jensen's inequality that
  \[
  \E\int_0^T e^{-\alpha s} \Phi'_q(v_n(s)) \bar{g}_n(s)\,ds < N
  \]
  (where $N$ might differ from the previous one). The proof is
  finished observing that
  $e^{-\alpha s} \Phi'_q(v_n(s)) = e^{-q\alpha s}
  \Phi'_q(\bar{u}_n(s))$
  and $e^{-q\alpha s} \geq e^{-q\alpha T}$ for all $s \in [0,T]$.
  \smallskip\par\noindent
  \textsc{Step 3.} We are going to show that the uniform estimate of
  the previous step implies that the generalized solution $u$ is also
  a mild solution to \eqref{eq:0}.

  The conclusion of the previous step can equivalently be written as
  \[
  \int_\Xi \phi_q(\bar{u}_n) \bar{g}_n \,dm < N,
  \]
  where $\Xi:=\Omega \times [0,T] \times D$, and $N$ is a constant
  independent of $n$.  Since $\phi_q$ is a homeomorphism of $\erre$,
  setting $v_n:=\phi_q(\bar{u}_n)$ and recalling the definition
  $\tilde{f}:= f \circ \phi_q^{-1}$, we have (see
  e.g. \cite[p.~II.12]{Bbk:Ens} about the associativity of composition of
  graphs)
  \[
  \bar{g}_n \in f(\bar{u}_n) = f \circ \phi_q^{-1}
  \bigl( \phi_q(\bar{u}_n) \bigr)
  = \tilde{f}(v_n),
  \]
  hence the previous estimate can be written as
  \[
  \int_{\Xi} v_n \bar{g}_n\,dm < N,
  \]
  where $\bar{g}_n \in \tilde{f}(v_n)$ $m$-a.e.  Since, by continuity of
  $\phi_q$, $v_n = \phi_q(u_n) \to \phi_q(u) = :v$ $m$-a.e. as
  $n \to \infty$ and $\dom\bigl(f \circ \phi_q^{-1}\bigr)=\erre$,
  Lemma \ref{lm:Brezis} implies that there exists $g \in L_1(m)$ and a
  subsequence $(n')$ of $(n)$ such that $\bar{g}_{n'} \to g$ weakly in
  $L_1(m)$ as $n' \to \infty$, and
  \[
  g \in f \circ \phi_q^{-1}(v) = f \circ \phi_q^{-1}(\phi_q(u)) = f(u)
  \quad m\text{-a.e.}
  \]
  Since $-A$ generates a $C_0$-semigroup of contractions on $L_1(D)$
  by assumption, one obtains
  \[
  \int_0^t S(t-s) \tilde{g}_n(s)\,ds \to \int_0^t S(t-s) g(s)\,ds
  \]
  weakly in $\L_1(L_1(L_1))$ as $\lambda \to 0$, by a reasoning
  completely analogous to that used in the last part of the proof of
  Proposition \ref{prop:cwp}. Similarly, convergence of the stochastic
  convolutions follows as in the proof just mentioned because
  \[
  \norm[\big]{\bar{B}_n - B(u)}_{\L_p(L_2(0,T;\gamma(H,L_q)))}
  \xrightarrow{n \to \infty} 0.
  \]
%
%
  \textsc{Step 4.} It remains to prove that $\hat{F}(u)$,
  $\tilde{F}^*(g) \in \L_1(L_1(L_1))$. In fact, we have
  $\bar{g}_nv_n = \tilde{F}(v_n) + \tilde{F}^*(\tilde{g}_n)$ because
  $g_n \in \tilde{f}(v_n) = \partial \tilde{F}(v_n)$, hence, by the
  previous Step, there exists a constant $N$, independent of $n$, such
  that
  \[
  \int_\Xi \tilde{F}(v_n) \,dm < N, \qquad
  \int_\Xi \tilde{F}^*(g_n) \,dm < N.
  \]
  The convexity of $\tilde{F}$ and $\tilde{F}^*$ implies the weak
  lower semicontinuity in $L_1(m)$ of
  \[
  \phi \mapsto \int_\Xi \tilde{F}(\phi)\,dm,
  \qquad \phi \mapsto \int_\Xi \tilde{F}^*(\phi)\,dm,
  \]
  hence $\tilde{F}^*(g) \in L_1(m)$ because $\bar{g}_n \to g$ weakly in
  $L_1(m)$. Moreover, $v_n \to v$ in $m$-measure and
  \[
  \norm{v_n}_{L_1(m)} = \norm{\bar{u}_n}_{L_{q-1}(m)} \to
  \norm{u_n}_{L_{q-1}(m)} = \norm{v}_{L_1(m)},
  \]
  by virtue of the strong convergence $\bar{u}_n \to u$ in $\H_p(L_q)$
  and the embedding $\H_p(L_q) \embed L_{q-1}(m)$. We thus have
  $v_n \to v$ strongly in $L_1(m)$, hence, similarly as above,
  $\hat{F}(u) = \tilde{F}(v) \in L_1(m)$.
\end{proof}

\begin{rmk}
  If $p<q$, the above proof does not work because Jensen's inequality
  reverses. However, a weaker integrability result can still be
  obtained. Namely, again by Jensen's inequality, we have
  \[
  \int_\Xi \abs[\big]{f_n \phi_q(u_n)}^{p/q} \,d\mu< N,
  \]
  uniformly over $n$, where the constant $N$ depends also on the
  Lebesgue measure of $D$. Setting
  $x^{\langle a \rangle} := \abs{x}^a\,\operatorname{sgn} x$ for all
  $x \in \erre$ and $a>0$, taking into account that $0 \in f(0)$,
  the previous estimate can equivalently be written as
  \[
  \int_\Xi f_n^{\langle p/q \rangle} \phi_q^{\langle p/q \rangle}(u_n) \,d\mu 
  < N.
  \]
  For any $a>0$ the function $x \mapsto x^{\langle a \rangle}$ is a
  homeomorphism of $\erre$, hence the function
  $\psi_{p,q}: x \mapsto \phi_q^{\langle p/q \rangle}(x)$ is also a
  homeomorphism of $\erre$. We clearly have
  \[
  f_n^{\langle p/q \rangle} \in f^{\langle p/q \rangle} \circ
  \psi_{p,q}^{-1} \bigl( \psi_{p,q}(u_n) \bigr)
  \]
  $\mu$-a.e., hence there exists $z \in L_1(\mu)$ such that
  $f_{n_k}^{\langle p/q \rangle} \to z$ weakly in $L_1(\mu)$ along a
  subsequence $(n_k)$, with
  $z \in f^{\langle p/q \rangle} \circ \psi_{p,q}^{-1} \bigl(
  \psi_{p,q}(u) \bigr) = f^{\langle p/q \rangle}(u)$.
  We thus have, for a generalized solution, that $\zeta \in f(u)$ is
  only in $L_{p/q}(\mu)$, rather than in $L_1(\mu)$. This in
  particular implies that it does not seem possible any longer to
  claim that $u$ is a mild solution, even in a very weak sense, as the
  semigroup $S$ is not defined in $L_q(D)$ spaces with $0<q<1$.
\end{rmk}

\subsection{Uniqueness}
The aim of this subsection is to prove continuous dependence on the
initial datum (from which uniqueness follows immediately) for mild
solutions to \eqref{eq:0}, \emph{without} assuming that
$g \in L_1(L_q)$. We need to assume, however, the same integrability
conditions on the solution that are established in the proof of
Proposition \ref{prop:mild}, as well as positivity and regularizing
properties for the resolvent of $A$ and a symmetry condition on $F$.

The key is the following estimate for the difference of two mild
solutions to \eqref{eq:0}, whose proof is inspired by an analogous
result, in a different setting, in \cite{BDPR-porous}.
\begin{lemma}
  \label{lm:dfcl}
  Under the hypotheses of Theorem \ref{thm:mild}, assume that $u_i$,
  $i=1,2$, satisfies
  \[
  u_i(t) + \int_0^t S(t-s) \bigl(g_i(s) - \eta u_i(s)\bigr)\,ds
  = S(t)u_{0i} + \int_0^t S(t-s) B(u_i(s))\,dW(s)
  \]
  for all $t \in [0,T]$, where $g_i \in \L_1(L_1(L_1))$, $g_i \in f(u_i)$
  $m$-a.e., and $\hat{F}(u_i)$, $\tilde{F}^*(g_i) \in
  \L_1(L_1(L_1))$.
  Then, setting $v_i(t):=e^{-\alpha t} u_i(t)$, $t \in [0,T]$, for
  $\alpha \geq 0$ constant, one has
  \begin{align*}
    &\norm[\big]{v_1(t)-v_2(t)}_{L_q}^q
      + q(\alpha-\eta) \int_0^t \norm[\big]{v_1(s)-v_2(s)}_{L_q}^q\,ds\\
    &\hspace{3em} + \int_0^t e^{-\alpha s}\Phi_q'(v_1(s)-v_2(s))
                    (g_1(s)-g_2(s))\,ds\\
    &\hspace{3em} \leq \norm[\big]{u_{01}-u_{02}}_{L_q}^q\\
    &\hspace{3em} \quad + \int_0^t e^{-\alpha s} \Phi_q'(v_1(s)-v_2(s))
      \bigl(B(u_1(s)) - B(u_2(s))\bigr)\,dW(s)    \\
    &\hspace{3em} \quad + \frac12 q(q-1) \int_0^t
      \norm[\big]{e^{-\alpha s}\bigl(B(u_1(s)) - B(u_2(s))\bigr)}^2_{\gamma(H,L_q)}
      \norm[\big]{v_1(s)-v_2(s)}_{L_q}^{q-2}\,ds
  \end{align*}
  for all $t \in [0,T]$.
\end{lemma}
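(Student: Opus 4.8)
The plan is to reduce the statement to Proposition~\ref{prop:gf} (It\^o's formula for $\norm{\cdot}_{L_q}^q$) after regularizing by the resolvent, since the low integrability $g_i\in\L_1(L_1(L_1))$ rules out a direct application. By hypothesis~(b') fix $\sigma\in\enne$ with $R_\lambda^\sigma(L_1)\subset L_q$, $R_\lambda:=(I+\lambda A)^{-1}$, and set $u_i^\varepsilon:=R_\varepsilon^\sigma u_i$. Since $R_\varepsilon$ commutes with $A$ and with $S$, applying $R_\varepsilon^\sigma$ to the mild identity \eqref{eq:mild} for $u_i$ shows, exactly as in the proof of Proposition~\ref{prop:gf}, that $u_i^\varepsilon$ is the strong solution of
\[
du_i^\varepsilon+Au_i^\varepsilon\,dt+R_\varepsilon^\sigma g_i\,dt=\eta u_i^\varepsilon\,dt+R_\varepsilon^\sigma B(u_i)\,dW,\qquad u_i^\varepsilon(0)=R_\varepsilon^\sigma u_{0i};
\]
here $R_\varepsilon^\sigma g_i\in L_1(0,T;L_q)$ a.s.\ (by (b') and $g_i\in\L_1(L_1(L_1))$, since $\norm{R_\varepsilon^\sigma g_i(s)}_{L_q}\lesssim_\varepsilon\norm{g_i(s)}_{L_1}$), $R_\varepsilon^\sigma B(u_i)\in L_2(0,T;\gamma(H,L_q))$ a.s., and $u_i^\varepsilon\in L_\infty(L_q)$ a.s. Hence Proposition~\ref{prop:gf} applies to $z^\varepsilon:=R_\varepsilon^\sigma(v_1-v_2)=e^{-\alpha\cdot}(u_1^\varepsilon-u_2^\varepsilon)$, whose drift is $-(\alpha-\eta)z^\varepsilon-e^{-\alpha s}R_\varepsilon^\sigma(g_1-g_2)\in L_1(0,T;L_q)$ and whose diffusion is $e^{-\alpha s}R_\varepsilon^\sigma(B(u_1)-B(u_2))$; using accretivity of $A$ on $L_q$ (hypothesis~(b)) for the linear term and $\Phi_q'(z^\varepsilon)z^\varepsilon=q\norm{z^\varepsilon}_{L_q}^q$, one obtains precisely the asserted inequality, but with $z^\varepsilon$, $R_\varepsilon^\sigma(g_1-g_2)$, $R_\varepsilon^\sigma(u_{01}-u_{02})$ and $R_\varepsilon^\sigma(B(u_1)-B(u_2))$ in place of the unregularized quantities.

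It then remains to let $\varepsilon\to0$. The ``routine'' terms are handled as follows. Since $R_\varepsilon^\sigma\to I$ strongly on $L_q$ and each $R_\varepsilon^\sigma$ is an $L_q$-contraction, $z^\varepsilon(t)\to v_1(t)-v_2(t)$ in $L_q$, $\int_0^t\norm{z^\varepsilon}_{L_q}^q\,ds\to\int_0^t\norm{v_1-v_2}_{L_q}^q\,ds$ (dominated convergence, domination by $\norm{u_1-u_2}_{L_\infty(L_q)}$), and $\norm{R_\varepsilon^\sigma(u_{01}-u_{02})}_{L_q}^q\le\norm{u_{01}-u_{02}}_{L_q}^q$. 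The stochastic integral converges in probability by the quadratic-variation criterion already used in the proof of Proposition~\ref{prop:gf}: continuity of $\Phi_q'$, the convergence $R_\varepsilon^\sigma(B(u_1)-B(u_2))\to B(u_1)-B(u_2)$ in $\gamma(H,L_q)$ (strong convergence of $R_\varepsilon^\sigma$ plus the ideal property), and domination by $\norm{u_1-u_2}_{L_\infty(L_q)}^{q-1}\norm{B(u_1)-B(u_2)}_{\gamma(H,L_q)}\in L_2(0,T)$. Passing to an a.s.-convergent subsequence in $\varepsilon$, the trace term likewise converges to its unregularized counterpart by dominated convergence, its integrand being bounded by $\norm{u_1-u_2}_{L_\infty(L_q)}^{q-2}\norm{B(u_1)-B(u_2)}_{\gamma(H,L_q)}^2\in L_1(0,T)$.

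The delicate point, and the expected main obstacle, is the nonlinear term on the left-hand side. Rearranging the $\varepsilon$-inequality so that $\int_0^t e^{-\alpha s}\Phi_q'(z^\varepsilon)R_\varepsilon^\sigma(g_1-g_2)\,ds$ stands alone, taking $\limsup_{\varepsilon\to0}$ of the (now convergent) right-hand side, and using $\norm{v_1(t)-v_2(t)}_{L_q}^q\le\liminf_\varepsilon\norm{z^\varepsilon(t)}_{L_q}^q$, it suffices to show
\[
\liminf_{\varepsilon\to0}\int_0^t e^{-\alpha s}\Phi_q'(z^\varepsilon(s))R_\varepsilon^\sigma(g_1(s)-g_2(s))\,ds\;\ge\;\int_0^t e^{-\alpha s}\Phi_q'(v_1(s)-v_2(s))\bigl(g_1(s)-g_2(s)\bigr)\,ds,
\]
whose right-hand side is itself well defined and finite only because of hypothesis~(d): by monotonicity of $f$ the integrand equals $qe^{-q\alpha s}\int_D\abs{u_1-u_2}^{q-1}\abs{g_1-g_2}\,dx\ge0$, and $\abs{u_1-u_2}^{q-1}\abs{g_1-g_2}\lesssim\sum_{i,j}\abs{u_i}^{q-1}\abs{g_j}$, each summand of which is bounded, via the Fenchel--Young inequality for the \emph{even} pair $(\tilde F,\tilde F^*)$ and $0\in f(0)$, by $\hat F(u_j)+\tilde F^*(g_i)\in\L_1(L_1(L_1))$ (these integrabilities being part of the hypotheses). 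For the $\liminf$ one exploits that $R_\varepsilon^\sigma$, being a positivity-preserving contraction on $L_1$ and on $L_\infty$, is a positive contraction on every $L_p$, so that the Jensen-type bound $\int_D j(R_\varepsilon^\sigma\phi)\le\int_D j(\phi)$ holds for every convex $j$ with $j(0)\le0$; applying this to $\tilde F$ and to $\tilde F^*$, together with the Fenchel identity $\phi_q(u_i)g_i=\hat F(u_i)+\tilde F^*(g_i)$, the weak-$L_1$ lower semicontinuity of $\phi\mapsto\int\tilde F(\phi)$ and $\phi\mapsto\int\tilde F^*(\phi)$, the pointwise convergences $R_\varepsilon^\sigma u_i\to u_i$ in $L_q$ and $R_\varepsilon^\sigma g_i\to g_i$ in $L_1$ (the latter since $\overline{\dom(A)}^{L_1}=L_1$), and the sign-definiteness supplied by monotonicity, one recovers the displayed bound in the limit. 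Reconciling the two simultaneous regularizations — $R_\varepsilon^\sigma$ acting on both $\Phi_q'(v_1-v_2)$ and $g_1-g_2$ — through these convex-duality estimates is the technical heart of the argument; once the $\liminf$ bound is available, substituting it into the rearranged $\varepsilon$-inequality gives the claim (the resulting pathwise inequality, established along one subsequence, then holds as stated, since no subsequence appears in it).
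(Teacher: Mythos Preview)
Your overall strategy coincides with the paper's: regularize by $R_\varepsilon^\sigma$, use that $u_i^\varepsilon$ is a strong solution so that It\^o's formula applies, then pass to the limit $\varepsilon\to0$ term by term. The treatment of the routine terms (the $L_q$-norms, the initial datum, the trace term, the stochastic integral) is the same as the paper's.

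The difference lies in the nonlinear term. You aim only for the one-sided bound
\[
\liminf_{\varepsilon\to0}\int_0^t e^{-\alpha s}\Phi_q'(z^\varepsilon)R_\varepsilon^\sigma(g_1-g_2)\,ds
\;\ge\;
\int_0^t e^{-\alpha s}\Phi_q'(v_1-v_2)(g_1-g_2)\,ds,
\]
and your sketch for it invokes weak-$L_1$ lower semicontinuity of $\int\tilde F$, $\int\tilde F^*$ together with ``sign-definiteness supplied by monotonicity''. This is where the argument is not convincing as written: monotonicity gives nonnegativity of $(g_1-g_2)\phi_q(u_1-u_2)$, but \emph{not} of the regularized integrand $R_\varepsilon^\sigma(g_1-g_2)\,\phi_q\bigl(R_\varepsilon^\sigma(u_1-u_2)\bigr)$, since $R_\varepsilon^\sigma g_i$ need not be a selection of $f(R_\varepsilon^\sigma u_i)$. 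Likewise, the weak lower semicontinuity of $\int\tilde F$, $\int\tilde F^*$ goes in the wrong direction for a lower bound on the product. So the mechanism you describe does not, as stated, yield the $\liminf$ inequality.

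The paper bypasses this by proving that the nonlinear term actually \emph{converges} (in $\L_1(L_1(L_1))$, hence in probability for each $t$). The point is that the very ingredients you list are strong enough for this: from the Fenchel--Young inequality for the even pair $(\tilde F,\tilde F^*)$, the convexity estimate $\hat F(\delta(a-b))\le\tfrac12(\hat F(a)+\hat F(b))$ (and similarly for $\tilde F^*$), and the Jensen bound $j(R_\varepsilon^\sigma\phi)\le R_\varepsilon^\sigma j(\phi)$ for positive sub-Markovian operators, one gets
\[
\bigl|R_\varepsilon^\sigma(g_1-g_2)\,\phi_q\bigl(R_\varepsilon^\sigma(u_1-u_2)\bigr)\bigr|
\;\lesssim\;
\sum_{i=1}^2 R_\varepsilon^\sigma\hat F(u_i) + R_\varepsilon^\sigma\tilde F^*(g_i),
\]
a sequence that converges in $L_1(m)$. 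This gives uniform integrability, and combined with the convergence in $m$-measure (which you already have), Vitali's theorem yields $L_1(m)$-convergence of the integrand. This is both cleaner and stronger than the $\liminf$ route, and it uses exactly the tools you identified; the fix is to aim for uniform integrability and Vitali rather than for a one-sided Fatou-type bound.
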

\begin{proof}
  Given $\sigma \in \enne$ such that $(I + \varepsilon A)^{-\sigma}$
  maps $L_1(D)$ to $L_q(D)$, set
  $\ep{h}_i := (I + \varepsilon A)^{-\sigma}h$ for all
  $h \in \{u_i,u_{0i},g_i,v_i\}$, and
  $\ep{B}_i := (I+\varepsilon)^{-\sigma} B(u_i)$.  Then $g_i \in L_1(L_q)$
  and $\ep{v}_i$ is the unique $L_q$-valued strong solution to
  \[
  d\ep{v}_i + A\ep{v}_i\,dt + (\alpha - \eta)\ep{v}_i\,dt 
  + e^{-\alpha t} \ep{g}_i\,dt = e^{-\alpha t} \ep{B}_i\,dW,
  \qquad \ep{u}_i(0)=\ep{u}_{0i}.
  \]
  It\^o's formula and \eqref{eq:tr-ineq} then yield
  \begin{equation}
  \label{eq:dfcl}
  \begin{split}
  &\norm[\big]{\ep{v}_1(t)-\ep{v}_2(t)}_{L_q}^q
    + q(\alpha-\eta) \int_0^t \norm[\big]{\ep{v}_1(s)-\ep{v}_2(s)}_{L_q}^q\,ds\\
  &\hspace{3em} + \int_0^t e^{-\alpha s}\Phi_q'(\ep{v}_1(s)-\ep{v}_2(s))
                  (\ep{g}_1(s)-\ep{g}_2(s))\,ds\\
  &\hspace{3em} \leq \norm[\big]{\ep{u}_{01}-\ep{u}_{02}}_{L_q}^q\\
  &\hspace{3em} \quad + \int_0^t e^{-\alpha s} \Phi_q'(\ep{v}_1(s)-\ep{v}_2(s))
    \bigl(\ep{B}_1(s) - \ep{B}_2(s)\bigr)\,dW(s)\\
  &\hspace{3em} \quad + \frac12 q(q-1) \int_0^t
    \norm[\big]{e^{-\alpha s}\bigl(\ep{B}_1(s) - \ep{B}_2(s)\bigr)}^2_{\gamma(H,L_q)}
    \norm[\big]{\ep{v}_1(s)-\ep{v}_2(s)}_{L_q}^{q-2}\,ds
  \end{split}
  \end{equation}
  We are now going to pass to the limit as $\varepsilon \to 0$ in the
  above inequality. Since ${(I+\varepsilon A)^{-\sigma}}$ converges to
  the identity in $\cL(L_q)$ in the strong operator topology, it
  immediately follows that
  \begin{align*}
  \norm[\big]{\ep{v}_1(t) - \ep{v}_2(t)}_{L_q} 
  &\xrightarrow{\varepsilon \to 0}
  \norm[\big]{v_1(t) - v_2(t)}_{L_q} \qquad \forall t \in [0,T],\\
  \norm[\big]{\ep{u}_{01}-\ep{u}_{02}}_{L_q} 
  &\xrightarrow{\varepsilon \to 0} \norm[\big]{u_{01} - u_{02}}_{L_q}.
  \end{align*}
  Since $(I+\varepsilon A)^{-\sigma}$ is contracting in $L_q$,
  $\norm{\ep{v}_1-\ep{v}_2}_{L_q} \leq \norm{v_1}_{L_q} +
  \norm{v_2}_{L_q}$ pointwise,
  hence, by Fubini's theorem, $v \in \H_p(L_q)$, and the dominated
  convergence theorem,
  \[
  \int_0^t \norm[\big]{\ep{v}(s)}_{L_q}^q\,ds 
  \xrightarrow{\varepsilon \to 0}
  \int_0^t \norm[\big]{v(s)}_{L_q}^q\,ds \qquad \forall t \leq T.
  \]
  The dominated convergence theorem also immediately shows that
  \begin{align*}
  &\int_0^t
  \norm[\big]{e^{-\alpha s}\bigl(\ep{B}_1(s) - \ep{B}_2(s)\bigr)}^2_{\gamma(H,L_q)}
  \norm[\big]{\ep{v}_1(s)-\ep{v}_2(s)}_{L_q}^{q-2}\,ds\\
  &\hspace{5em} \xrightarrow{\varepsilon \to 0}
  \int_0^t
  \norm[\big]{e^{-\alpha s}\bigl(B(u_1(s)) - B(u_2(s))\bigr)}^2_{\gamma(H,L_q)}
  \norm[\big]{v_1(s) - v_2(s)}_{L_q}^{q-2}\,ds.
  \end{align*}
  Let us now consider the last term on the left-hand side of
  \eqref{eq:dfcl}. Recalling the definition of the homeomorphism
  $\phi_q: x \mapsto x\abs{x}^{q-2}$, we have
  \begin{align*}
  &\int_0^t e^{-\alpha s}\Phi_q'(\ep{v}_1(s)-\ep{v}_2(s))
                  (\ep{g}_1(s)-\ep{g}_2(s))\,ds\\
  &\hspace{5em} = 
  q \int_0^t e^{-q\alpha s} \Phi_q'\bigl(\ep{u}_1(s) - \ep{u}_2(s)\bigr)
     \bigl(\ep{g}_1(s) - \ep{g}_2(s) \bigr)\,ds\\
  &\hspace{5em} \eqsim 
  \int_0^t\!\int_D \bigl(\ep{g}_1(s) - \ep{g}_2(s)\bigr)
     \phi_q\bigl(\ep{u}_1(s) - \ep{u}_2(s)\bigr)\,dx\,ds.
  \end{align*}
  The properties of $(I+\varepsilon A)^{-\sigma}$ imply easily that
  $\ep{g}_i \to g_i$ in $\L_1(L_1(L_1))$, hence in $m$-measure, as
  $\varepsilon \to 0$. Similarly, since $\ep{u}_i \to u_i$ in
  $m$-measure and $\phi_q$ is continuous,
  $\phi_q(\ep{u}_1-\ep{u}_2) \to \phi_q(u_1-u_2)$ in $m$-measure. In
  particular,
  \[
  \bigl(\ep{g}_1 - \ep{g}_2\bigr) \phi_q\bigl(\ep{u}_1 - \ep{u}_2\bigr)
  \xrightarrow{\varepsilon \to 0} 
  (g_1 - g_2) \phi_q(u_1 - u_2)
  \]
  in $m$-measure. We are going to show that this convergence takes
  place in $\L_1(L_1(L_1))$.  To this purpose, it suffices to show, by
  Vitali's theorem, that the sequence on the right-hand side is
  uniformly integrable (UI).
  Let $\delta \in \mathopen]0,1/2\mathclose]$ be arbitrary but
  fixed. By Young's inequality with conjugate functions $\tilde{F}$
  and $\tilde{F}^*$ and the definition
  $\hat{F}:=\tilde{F} \circ \phi_q$,
  \begin{align*}
  \abs[\big]{\bigl(\ep{g}_1 - \ep{g}_2\bigr)%
      \phi_q\bigl(\ep{u}_1 - \ep{u}_2\bigr)} &\eqsim 
  \abs[\big]{\delta(\ep{g}_1 - \ep{g}_2)} \,
  \abs[\big]{\phi_q\bigl(\delta(\ep{u}_1 - \ep{u}_2)\bigr)}\\
  &\leq \hat{F}\bigl(\abs[\big]{\delta(\ep{u}_1 - \ep{u}_2)} \bigr)
  + \tilde{F}^*\bigl(\abs[\big]{\delta(\ep{g}_1 - \ep{g}_2)} \bigr)\\
  &= \hat{F}\bigl(\delta(\ep{u}_1 - \ep{u}_2) \bigr)
  + \tilde{F}^*\bigl(\delta(\ep{g}_1 - \ep{g}_2) \bigr).
  \end{align*}
  In the last step we have used that $\hat{F}$ and $\tilde{F}^*$ are
  even: in fact, since $F$ is even and $F(0)=0$, we infer that $f$ is
  odd, $\tilde{f}$ is odd, hence $\tilde{F}$, $\tilde{F}^*$ and
  $\hat{F}$ are even with
  $\tilde{F}(0) = \tilde{F}^*(0) = \hat{F}(0) = 0$. Then it follows
  that $\tilde{F}^*$ and $\hat{F}$ are increasing on $\erre_+$ (this
  can also be seen by
  $\partial \tilde{F}^* = \tilde{f}^{-1} = \phi_q \circ f^{-1} \geq 0$
  and $\partial \hat{F} = f \phi_q' \geq 0$ on $\erre_+$). Therefore
  $\hat{F}(cx) = \hat{F}(c\abs{x}) \leq \hat{F}(\abs{x}) = \hat{F}(x)$
  for all $x \in \erre$ and $c\in[0,1]$, and the same holds for
  $\tilde{F}^*$. In particular,
  \begin{align*}
  \hat{F}\bigl(\delta(\ep{u}_1 - \ep{u}_2) \bigr)
  &= \hat{F}\Bigl( \frac12 (2\delta \ep{u}_1) + \frac12 (-2\delta \ep{u}_2)
  \Bigr)\\
  &\leq \frac12 \hat{F}\bigl( 2\delta \ep{u}_1 \bigr)
        + \frac12 \hat{F}\bigl( 2\delta \ep{u}_2 \bigr)\\
  &\leq \frac12\bigl( \hat{F}(\ep{u}_1) + \hat{F}(\ep{u}_2) \bigr),
  \end{align*}
  and, completely analogously,
  \[
  \tilde{F}^*\bigl(\delta(\ep{g}_1 - \ep{g}_2) \bigr) \leq 
  \frac12 \bigl( \tilde{F}^*(\ep{g}_1) + \tilde{F}^*(\ep{g}_2) \bigr),
  \]
  thus also
  \[
  \abs[\big]{\bigl(\ep{g}_1 - \ep{g}_2\bigr)%
  \phi_q\bigl(\ep{u}_1 - \ep{u}_2\bigr)}
  \lesssim \hat{F}(\ep{u}_1) + \hat{F}(\ep{u}_2)
  + \tilde{F}^*(\ep{g}_1) + \tilde{F}^*(\ep{g}_2).
  \]
  Let us now observe that, by Jensen's inequality for positive
  operators (see e.g. \cite{Haa07}),
  \begin{align*}
  \tilde{F}^*(\ep{g}_i) = \tilde{F}^*\bigl( (I+\varepsilon A)^{-\sigma} g_i \bigr)
  &\leq (I+\varepsilon A)^{-\sigma} \tilde{F}^*(g_i)\\
  \hat{F}^*(\ep{u}_i) = \hat{F}\bigl( (I+\varepsilon A)^{-\sigma} u_i \bigr)
  &\leq (I+\varepsilon A)^{-\sigma} \hat{F}(u_i)
  \end{align*}
  But since $\hat{F}(u_i)$, $\tilde{F}^*(g_i) \in \L_1(L_1(L_1))$ by
  assumption, hence
  $(I+\varepsilon A)^{-\sigma}\hat{F}(u_i) \to \hat{F}(u_i)$ and
  $(I+\varepsilon A)^{-\sigma}\tilde{F}^*(g_i) \to \tilde{F}^*(g_i)$
  as $\varepsilon \to 0$ in $\L_1L_1L_1$, it follows that the sequence
  $\abs[\big]{(\ep{g}_1 - \ep{g}_2) \phi_q(\ep{u}_1 - \ep{u}_2)}$ is
  dominated by a convergent sequence of $\L_1(L_1(L_1))$, which is a
  fortiori UI.  Then
  $(\ep{g}_1 - \ep{g}_2) \phi_q(\ep{u}_1 - \ep{u}_2)$ is also UI,
  because a (positive) sequence dominated by a UI sequence is itself
  UI. We have thus proved that the last term on the left-hand side of
  \eqref{eq:dfcl} converges in probability for all $t \in [0,T]$ to
  \[
  \int_0^t e^{-\alpha s}\Phi_q'(v_1(s)-v_2(s)) (g_1(s)-g_2(s))\,ds.
  \]
  It remains only to consider the stochastic integral on the
  right-hand side of \eqref{eq:dfcl}, which converges to
  \[
  \int_0^t e^{-\alpha s} \Phi_q'(v_1(s)-v_2(s))
  \bigl(B(u_1(s)) - B(u_2(s))\bigr)\,dW(s)
  \]
  in probability for all $t \in [0,T]$ as $\varepsilon \to 0$. The
  proof is based on an argument entirely analogous to the one already used
  in the proof of Proposition \ref{prop:gf}, and is hence omitted.
\end{proof}

We can now prove uniqueness of mild solution to \eqref{eq:0} and their
continuous dependence on the initial datum.
\begin{prop}
  Under the hypotheses of Theorem \ref{thm:mild}, assume that
  $u \in \H_p(L_q)$ is a mild solution to \eqref{eq:0}. Then $u$ is
  the unique mild solution such that
  $\hat{F}(u) + \tilde{F}^*g \in \L_1(L_1(L_1))$.  Moreover, the
  solution map $u_0 \mapsto u$ is Lipschitz continuous from
  $\L_p(L_q)$ to $\H_p(L_q)$.
\end{prop}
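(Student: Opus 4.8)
The plan is to obtain continuous dependence on the initial datum --- whence uniqueness --- directly from the differential inequality of Lemma~\ref{lm:dfcl}, by rerunning the absorption argument already carried out in the proof of Lemma~\ref{lm:smile}. Existence of a mild solution with $\hat{F}(u)$, $\tilde{F}^*(g)\in\L_1(L_1(L_1))$ is supplied by Proposition~\ref{prop:mild}, so it suffices to show that any two mild solutions $u_1$, $u_2$ enjoying this extra integrability, with initial data $u_{01}$, $u_{02}$ and associated selections $g_i\in f(u_i)$, $g_i\in\L_1(L_1(L_1))$, satisfy
\[
\norm{u_1-u_2}_{\H_p(L_q)}\lesssim\norm{u_{01}-u_{02}}_{\L_p(L_q)};
\]
uniqueness is then the special case $u_{01}=u_{02}$.

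First I would fix $\alpha>\eta$, put $v_i:=e^{-\alpha\cdot}u_i$, and note that the hypotheses of Lemma~\ref{lm:dfcl} are met verbatim, so its inequality is available. The crucial observation is that the term $\int_0^t e^{-\alpha s}\Phi_q'(v_1-v_2)(g_1-g_2)\,ds$ on the left-hand side is nonnegative: indeed $\Phi_q'(v_1-v_2)(g_1-g_2)=q\int_D\abs{v_1-v_2}^{q-2}(v_1-v_2)(g_1-g_2)\,dx$, and since $v_1-v_2=e^{-\alpha t}(u_1-u_2)$ has pointwise the same sign as $u_1-u_2$ while $(u_1-u_2)(g_1-g_2)\geq0$ a.e.\ by monotonicity of $f$ (recall $g_i\in f(u_i)$), the integrand is nonnegative. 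Discarding this term, and bounding the $\gamma$-radonifying term by $\frac12 q(q-1)\norm{B}_{\lip}^2\int_0^t\norm{v_1-v_2}_{L_q}^q\,ds$ by means of the Lipschitz continuity of $B$, I am left with
\begin{align*}
\norm{v_1(t)-v_2(t)}_{L_q}^q
&+q\Bigl(\alpha-\eta-\tfrac12(q-1)\norm{B}_{\lip}^2\Bigr)\int_0^t\norm{v_1-v_2}_{L_q}^q\,ds\\
&\leq\norm{u_{01}-u_{02}}_{L_q}^q+M(t),
\end{align*}
where $M$ denotes the stochastic integral appearing on the right-hand side of the inequality of Lemma~\ref{lm:dfcl}. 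This is exactly the inequality reached in the proof of Lemma~\ref{lm:smile}.

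From here the argument is a verbatim repetition of that proof: choose $\alpha$ so that the bracketed coefficient is strictly positive, take suprema in $t$, raise to the power $p/q$, take expectations, set $M_T^*:=\sup_{t\leq T}\abs{M(t)}$, and control $\norm{M_T^*}_{\L_{p/q}}^{1/q}$ by the Burkholder--Davis--Gundy inequality, the Lipschitz bound on $B$, the interpolation inequality $\norm{\phi}_{L_{2q}}\leq\norm{\phi}_{L_q}^{1/2}\norm{\phi}_{L_\infty}^{1/2}$, and Young's inequality, splitting it into $\varepsilon\norm{B}_{\lip}^{2/q}\norm{v_1-v_2}_{\H_p(L_q)}$ plus $N(\varepsilon)\norm{v_1-v_2}_{\L_p(L_q(L_q))}$. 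Choosing $\varepsilon$ small and then $\alpha$ large absorbs both terms on the right into the left-hand side, yielding $\norm{v_1-v_2}_{\H_p(L_q)}\lesssim\norm{u_{01}-u_{02}}_{\L_p(L_q)}$; the claim then follows from $\norm{u_1-u_2}_{\H_{p,\alpha}(L_q)}=\norm{v_1-v_2}_{\H_p(L_q)}$ and the equivalence of the $\H_{p,\alpha}(L_q)$ norms. I expect no serious obstacle: the genuinely delicate input has already been packaged into Lemma~\ref{lm:dfcl}, whose proof is where the symmetry of $F$, the positivity and hypercontractivity of the resolvent of $A$, and the uniform-integrability/Vitali argument are used to pass to the limit $\varepsilon\to0$ in the regularized It\^o formula. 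The only point here requiring a little care is the bookkeeping verification that the selections $g_i$ furnished by Definition~\ref{def:mild}, together with the integrability conclusions of Proposition~\ref{prop:mild}, are precisely the hypotheses consumed by Lemma~\ref{lm:dfcl}, and that the monotonicity term retains its sign after being multiplied by the positive weight $e^{-\alpha s}$; everything else is mechanical.
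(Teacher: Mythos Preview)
Your proposal is correct and follows essentially the same approach as the paper: apply Lemma~\ref{lm:dfcl}, discard the nonnegative monotonicity term $\int_0^t e^{-\alpha s}\Phi_q'(v_1-v_2)(g_1-g_2)\,ds$, and then rerun verbatim the absorption argument of Lemma~\ref{lm:smile}. The paper is simply more terse, writing ``we are now in the condition to use exactly the same proof of Lemma~\ref{lm:smile}'' where you spell out the steps.
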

\begin{proof}
  Let $u_1$, $u_2$ be as in the previous Lemma, with $u_{01}$,
  $u_{0,2} \in \L_p(L_q)$. Then
  \begin{align*}
    &\norm[\big]{v_1(t)-v_2(t)}_{L_q}^q
      + q(\alpha-\eta) \int_0^t \norm[\big]{v_1-v_2}_{L_q}^q\,ds
    + \int_0^t e^{-\alpha s}\Phi_q'(v_1-v_2)
                    (g_1-g_2)\,ds\\
    &\hspace{3em} \leq \norm[\big]{u_{01}-u_{02}}_{L_q}^q
    + \int_0^t e^{-\alpha s} \Phi_q'(v_1-v_2)
      \bigl(B(u_1) - B(u_2)\bigr)\,dW    \\
    &\hspace{3em} \quad + \frac12 q(q-1) \int_0^t
      \norm[\big]{e^{-\alpha s}\bigl(B(u_1) - B(u_2)\bigr)}^2_{\gamma(H,L_q)}
      \norm[\big]{v_1-v_2}_{L_q}^{q-2}\,ds,
  \end{align*}
  where
  \[
  \Phi_q'(v_1-v_2)\bigl(g_1 - g_2\bigr) = q e^{-(q-1)\alpha \cdot}
  \ip[\big]{g_1-g_2}{\phi_q(u_1-u_2)} \geq 0.
  \]
  We are now in the condition to use exactly the same proof of Lemma
  \ref{lm:smile}, arriving at
  \[
  \norm[\big]{u_1-u_2}_{\H_p(L_q)} \lesssim
  \norm[\big]{u_{01}-u_{02}}_{\L_p(L_q)},
  \]
  which proves that $u_0 \mapsto u \in \lip(\H_p(L_q),\L_p(L_q))$ and,
  as an immediate consequence, uniqueness of the solution.
\end{proof}

\begin{rmk}
  It is clear by the previous proof that we do \emph{not} have
  well-posedness in the space $\H_p(L_q)$, as our uniqueness result
  holds only under additional assumptions on the solution itself. The
  problem of unconditional uniqueness in $\H_p(L_q)$ remains therefore
  open.
\end{rmk}


\bibliographystyle{amsplain}
\bibliography{ref}

\end{document}